\documentclass[10pt]{amsart}
\usepackage{geometry}                
\usepackage{graphicx}
\usepackage{caption}
\usepackage{subcaption}
\usepackage{dsfont}
\usepackage{amssymb}
\usepackage{amsmath}
\usepackage{epstopdf}
\usepackage{fullpage}
\usepackage{enumerate}
\usepackage{color}
\usepackage{amsthm}
\usepackage{placeins}
\usepackage{array}
\usepackage{blkarray}
\usepackage{multirow}
\usepackage{float}
\usepackage{morefloats}
\usepackage{pgfplots}
\usepackage[ruled]{algorithm2e}
\usepackage{algorithmic}
\usepackage[abs]{overpic}
\usepackage{newclude}

\usepackage{amsopn}
\let\oldtocsection=\tocsection

\let\oldtocsubsection=\tocsubsection

\let\oldtocsubsubsection=\tocsubsubsection

\renewcommand{\tocsection}[2]{\hspace{0em}\oldtocsection{#1}{#2}\textbf}
\renewcommand{\tocsubsection}[2]{\hspace{1em}\oldtocsubsection{#1}{#2}}
\renewcommand{\tocsubsubsection}[2]{\hspace{2em}\oldtocsubsubsection{#1}{#2}}

\usepackage[colorlinks=true]{hyperref}
\usepackage[nameinlink,capitalise]{cleveref}
\crefname{equation}{}{}

\makeatletter
\def\subsection{\@startsection{subsection}{3}%
  \z@{.5\linespacing\@plus.7\linespacing}{.5\linespacing}%
  {\bf}}
\makeatother

\makeatletter
\def\subsubsection{\@startsection{subsubsection}{3}%
  \z@{.5\linespacing\@plus.7\linespacing}{.5\linespacing}%
  {\it}}
\makeatother

\newcommand{\dv}{\text{\rm div}}

\renewcommand{\o}{\text{\rm o}}

\renewcommand{\d}{\text{\rm d}}
\newcommand{\capa}{\text{\rm cap}}
\newcommand{\capar}{\text{\rm cap}_{\text{\rm Rob}}}

\newcommand{\e}{\varepsilon}
\newcommand{\vf}{\varphi}

\newcommand{\Eint}{E_{\text{\rm int}}}
\newcommand{\Eext}{E_{\text{\rm ext}}}

\newcommand{\calM}{{\mathcal M}}

\newcommand{\calO}{{\mathcal O}}
\newcommand{\calC}{{\mathcal C}}

\newcommand{\R}{{\mathbb R}}

\definecolor{darkgreen}{RGB}{1,150,32}

\newcommand{\D}{{\mathbb D}}

\newcommand{\ds}{\displaystyle}

\begin{document}
\newtheorem{theorem}{Theorem}[section]
\newtheorem{remark}{Remark}[section]
\newtheorem{definition}{Definition}[section]
\newtheorem{lemma}{Lemma}[section]
\newtheorem{corollary}{Corollary}[section]
\newtheorem{proposition}{Proposition}[section]
\newtheorem{propdef}{Definition-Proposition}[section]
\newtheorem{example}{Example}[section]
\numberwithin{equation}{section}

\title{Asymptotic expansion of the voltage potential under small Robin perturbations of its boundary conditions}
\author{
E. Bonnetier \textsuperscript{1}, 
C. Dapogny \textsuperscript{2},
R. Moskalenko \textsuperscript{1}
}
\maketitle
\begin{center}
\emph{\textsuperscript{1} Institut Fourier, Universit\'e Grenoble-Alpes, BP 74, 38402 Saint-Martin-d’H\`eres Cedex, France}. \\
\emph{\textsuperscript{2} Sorbonne Universit\'e, Universit\'e Paris Cit\'e, CNRS, Inria, Laboratoire Jacques-Louis Lions, LJLL, F-75005 Paris, France}.
\end{center}

 
\begin{abstract}
Inspired by questions related to inverse problems and shape optimization,
we derive an asymptotic expansion of the voltage potential, solution to a model elliptic second-order partial differential equation, under small perturbations of its boundary conditions. 
More precisely, the homogeneous Dirichlet or homogeneous Neumann boundary condition in a fixed, reference configuration of the problem is replaced by a Robin boundary condition with admittance $k_\e > 0$ on a ``small'' subset $\omega_\e$ of the boundary of the ambient domain, vanishing at the limit $\e \to 0$. In each of these two situations, a general asymptotic formula is established for the voltage potential, which rests on minimal assumptions about the shape of the vanishing subset $\omega_\e$ and the parameter $k_\e$. The scalings of these expansions, capturing the intensity of the perturbation, are measured by new quantities called  ``Robin-Dirichlet'' capacity or  a ``Robin-Neumann'' capacity, which depend on the geometry of $\omega_\e$ and on the value of the parameter $k_\e$. We analyze how these quantities compare to 
more classical measures of the ``smallness'' of $\omega_\e$, such as the capacity or the Neumann capacity of $\omega_\e$, according to the behavior of $k_\e$ as $\e\to 0$. 
\end{abstract} 

\medskip

{\bf Keywords}~:
Asymptotic analysis, Robin boundary conditions, Logarithmic capacity, 
Neumann capacity.

{\bf MSC}~: 35C15, 35C20, 31A15.


\par
\vspace*{10mm}

\hrule
\vspace*{10mm}

\tableofcontents
\vspace*{5mm}
\hrule
\vspace*{5mm}

\section{Introduction}

\noindent The analysis  of the asymptotic effect of ``small'' perturbations of the features of a physical model, accounted for by a partial differential equation, is handful for a variety of purposes. 
On the one hand, it can be used to make the system robust with respect to uncertain fluctuations of its parameters \cite{bonnans2013perturbation}. On the other hand, this sensitivity is the basic ingredient of multiple inverse problems techniques for the gradient-based reconstruction of such features from observational data \cite{ammari2004reconstructionbook,isakov2006inverse}, or for the optimization of the shape of the system with respect to a performance criterion \cite{henrot2018shape,novotny2012topological}.

The present article aims to analyze one particular class of such perturbations. To set ideas, we consider throughout the following reference elliptic boundary value problem posed in a smooth bounded domain $\Omega$ in the Euclidean space $\R^d$, where $d=2$ or $3$:
\begin{equation}\label{eq.conducbg}
\left\{ \begin{array}{cccl}
\dv( \gamma \nabla u_0) &=& f & \textrm{in}\; \Omega,\\
u_0 &=& 0 & \textrm{on}\; \Gamma_D,\\
\gamma \frac{ \partial u_0}{\partial n} &=& 0 & \textrm{on}\; \Gamma_N.
\end{array}\right.
\end{equation}
Here, $\Gamma_D$ and $\Gamma_N$ are disjoint and complementary parts of $\partial \Omega$, respectively bearing homogeneous Dirichlet and Neumann boundary conditions and $f$ is a smooth source term.
The conductivity $\gamma$ of the medium is a smooth function satisfying:
\begin{equation} \label{cond_ell}
0 \;<\; \gamma^- \;\leq\; \gamma(x) \;\leq\; \gamma^+ < \infty, \quad \text{ for a.e. }\; x \in \Omega,
\end{equation}
for some fixed positive constants $\gamma^-$, $\gamma^+$. 
\par\medskip

\noindent \textbf{Volumic perturbations.} A large body of work in the literature is devoted to the analysis of the asymptotic behavior of the solution
to an elliptic equation of the form \cref{eq.conducbg} when its material coefficient $\gamma$ is perturbed in a small subset $\omega_\e$ of 
the domain $\Omega$. In the seminal paper \cite{cedio1998identification}, a perturbed version of \cref{eq.conducbg} is considered where $\gamma$ is replaced with another coefficient inside a diametrically small subset $\omega_\e$. Precisely, the conductivity $\gamma$ in \cref{eq.conducbg} is replaced with that $\gamma_\e$ defined by:
$$
\gamma_\e(x) \: = \: \gamma(x) + (\widetilde{\gamma}(x) - \gamma(x))\,\mathds{1}_{\omega_\e}(x), \quad x \in \Omega,
$$
where $\widetilde{\gamma}$ is another smooth function also satisfying \cref{cond_ell} and $\mathds{1}_A$ stands for the characteristic function of a set $A$.
The set  $\omega_\e := x_0 + \e \omega$ is a translated and rescaled version of a fixed inclusion pattern $\omega \subset \R^d$; $\omega_\e$ lies uniformly far from the boundary $\partial \Omega$ in the sense that;
$$ \forall \e >0, \quad \textrm{dist}(\omega_\e, \partial \Omega) \: \geq \: d_0 > 0,$$
for some fixed distance $d_0 > 0$.
In this context, it is shown in \cite{cedio1998identification} that at a point $x$ lying ``far'' from all the subsets
$\omega_\e$, the perturbed solution $u_\e$ satisfies
\begin{equation} \label{asymp_vol}
u_\e(x) \:\: =\:\: u_0(x) \;+\; |\omega_\e| \calM\nabla u_0(x_0) \cdot \nabla_y G(x,x_0)
\;+\; \o( |\omega_\e| ),
\end{equation}
where $G(x,y)$ is the Green's function of the background problem \cref{eq.conducbg}, see \cref{sec_2}.
The polarization matrix $\calM \in \R^{d\times d}$ in this formula is defined by: 
$$\calM_{ij} \;=\; \ds\int_\omega \frac{\partial \vf_j}{\partial y_i}(y) \, \d y, \quad i,j=1,\ldots,d, $$ 
involving the solutions $\varphi_j$ to $d$ auxiliary problems set in the infinite space $\R^d$:
$$-\dv\Big( \gamma_{1}(y) \nabla(\vf_j(y) + y_j) \Big) \;=\; 0 \quad\textrm{in}\; \R^d, \text{ where } \gamma_1(y) = \gamma(x_0) +  (\widetilde{\gamma}(x_0) - \gamma(x_0))\,\mathds{1}_{\omega}(y).$$

Asymptotic expansions of the form \cref{asymp_vol} were derived in more challenging physical contexts, described by e.g. the linear elasticity system \cite{ammari2002complete},
or the Helmholtz and Maxwell equations~\cite{ammari2001asymptotic}, 
and in the case of eigenvalue problems~\cite{ammari_moskow_eigen}. These analyses were also extended to the situations of
cracks or strip-like perturbations \cite{beretta2003asymptotic,beretta2001asymptotic,friedman1989identification}.
On a different note, assuming both conductivities $\gamma$ and $\widetilde\gamma$ to be constant, layer potential techniques allow to derive higher-order 
expansions of the perturbed field $u_\e$, see~\cite{ammari2007polarization} and the references therein.
\medskip

Intuitively, formula \cref{asymp_vol} expresses that, up to first order terms in $|\omega_\e|$,
the set~$\omega_\e$  asymptotically acts like a dipole placed at $x_0$, 
with moment $\calM \nabla u_0(x_0)$.
In the context of inverse problems, this formula contains the information that one may expect to recover 
from measurements of $u_\e - u_0$: 
the singularity of $\nabla_y G(x,x_0)$ allows to determine the position $x_0$
of the center of $\omega_\e$, while some partial information about the geometry of $\omega$ and on the contrast between material coefficients
is encoded in $\calM$.
Asymptotic expansions of the form \cref{asymp_vol} have indeed been used to design efficient detection algorithms akin to 
linear sampling methods~\cite{AmmariIakovlevaLesselierPerrusson,bruhl2003direct},
and to propose approximate cloaking mechanisms \cite{Kohn_Shen_Vogelius_Weinstein}.
\medskip

The general structure of the first-order correction to the background potential induced by a perturbation of the conductivity coefficient within a ``small'' set $\omega_\e$ of arbitrary shape was elucidated in \cite{capdeboscq2003general}.
This work proceeds under the following minimal assumptions about $\omega_\e$:
\begin{itemize}
\item[-] For each $\e >0$, $\omega_\e$ may be the reunion of  $n_\e < \infty$ 
connected components,
\medskip

\item[-] The $\omega_\e$ are all contained in a fixed compact set $K_0 \Subset \Omega$,
and thus stay uniformly away from $\partial \Omega$,
\medskip

\item[-] The volume $|\omega_\e|$ tends to $0$ as $\e \to 0$.
\end{itemize}
In this setting, there exists a subsequence of the indices $\e$, still denoted by $\e$ for simplicity, 
a non-trivial Radon measure~$\mu$ with support in~$K_0$, 
and a polarization tensor field~${\mathcal M} \in L^2(\Omega;\d \mu)$ 
such that the following asymptotic expansion holds true at each point $x \in \Omega \setminus K_0$:
\begin{equation} \label{asympYM}
u_\e(x) \:\: = \:\: u_0(x) + |\omega_\e| \langle \mu(y), {\mathcal M}(y) \nabla u_0(y) \cdot \nabla G(x,y) \rangle
\;+\; \o(|\omega_\e|).
\end{equation} 
This compensated compactness result echoes to typical results from two-phase homogenization theory \cite{allaire2002shape,bensoussan2011asymptotic,murat2018h}; it somehow shows that ``small'' perturbations are ``dilute'' limits of the homogenization phenomenon, i.e. when the fraction of one of the phases at play tends to $0$.

Counterpart structure theorems were later proved in the more challenging physical contexts of elasticity and electromagnetism, see \cite{beretta2012small} and \cite{griesmaier2011general}, respectively. From the applications viewpoint, these developments have led to the notion of topological derivative, appraising how a physical quantity, involving the voltage potential, depends on the introduction of a small heterogeneity (or a hole) within the ambient medium, see \cite{dapogny2021topological,garreau2001topological,novotny2012topological,sokolowski1999topological}.
 \par\medskip

\noindent \textbf{Perturbations of boundary conditions.} 
Our recent work \cite{bonnetier_dapogny_vogelius} deals with another type of perturbations of the model background equation \cref{eq.conducbg}, which consists in altering its boundary conditions on a ``small'' region $\omega_\e$ of $\partial \Omega$.
This work considers two different situations:
\begin{itemize}
\item[(i)]
The sets $\omega_\e$ are contained in the region $\Gamma_N$ and the Neumann boundary condition imposed in there is replaced by a homogeneous Dirichlet condition, leading to a perturbed potential $u_\e^{N,\infty}$.
\item[(ii)]
The sets $\omega_\e$ are contained in $\Gamma_D$ and the Dirichlet condition imposed in there is replaced by a homogeneous Neumann condition, leading to a perturbed potential $u_\e^{D,0}$.
\end{itemize}
In both situations, the perturbed voltage potential 
has a similar asymptotic structure to that in \cref{asympYM}. Precisely,
up to extraction of a subsequence (still indexed by $\e$), there exists a Radon measure $\mu$ 
supported on~$\partial \Omega$, such that for any smooth cut-off function $\eta$, with support strictly inside $\Gamma_N$, it holds:
\begin{equation}\label{asymp_pertN}
u_\e^{N,\infty}(x) \: = \: u_0(x) + \capa(\omega_\e) \left\langle \mu(y), \eta(y) u_0(y) \gamma(y) G(x,y) \right \rangle
\;+\; \o\big(\capa(\omega_e)\big),
\end{equation}
and for any smooth cut-off function $\eta$, whose support is now strictly contained in $\Gamma_D$:
\begin{equation}\label{asymp_pertD}
u_\e^{D,0}(x) \: = \: u_0(x) + e(\omega_\e) \left\langle\mu(y), \eta(y)  \gamma(y)\frac{\partial u_0}{\partial n}(y)  \frac{\partial G}{\partial n_y} (x,y) \right\rangle
\;+\; \o\big(e(\omega_\e)\big).
\end{equation}
The scalings featured in the asymptotic expressions \cref{asymp_pertN,asymp_pertD} are different. 
The replacement of a homogeneous Neumann by a Dirichlet boundary condition impacts the voltage potential
by a term of the order of the capacity of $\omega_\e$, which is defined by \cite{henrot2018shape}:
\begin{equation}\label{eq.defcapa}
\capa(\omega_\e) =
\inf \left\{ ||v||^2_{H^1(\R^d)}, \:\: v \in H^1(\R^d), \:\: v = 1\;
\textrm{in a neighborhood of}\; \omega_\e \right\}.
\end{equation}
In the second situation \cref{asymp_pertD}, the difference $u_\e^{D,0} - u_0$ scales like the ``Neumann capacity''
$e(\omega_\e)$ defined in our previous work \cite{bonnetier_dapogny_vogelius} as:
\begin{eqnarray*}
e(\omega_\e) &=& \inf \ds\int_{\R^d \setminus \overline{\omega_\e}} \Big( \lvert \nabla z\lvert ^2 + z^2 \Big) \:\d x,
\end{eqnarray*}
where the infimum is taken over all functions $z \in H^1(\R^d \setminus \overline{\omega_\e})$
that solve
\[ \left\{ \begin{array}{ccll}
-\Delta z + z &=& 0 & \textrm{in}\; \R^d \setminus \overline{\omega_\e},
\\
\frac{\partial z}{\partial n} &=& \pm 1 & \textrm{on each of the connected components of}\; \omega_\e.
\end{array}\right.
\]
For particular geometries of the sets $\omega_\e$,
the measure $\mu$ can be fully explicited. 
These results apply to shape optimization problems where one seeks to
optimize subsets of the boundary $\partial \Omega$ where particular boundary conditions
are imposed: in~\cite{bonnetier_brito_dapogny_estevez}, the asymptotic 
formulas \cref{asymp_pertN} and \cref{asymp_pertD} are used to calculate ``topological 
derivatives'' for a quantity of interest to be minimized that depends on the voltage potential, and a gradient descent strategy is implemented from this datum.

 \par\medskip
\noindent \textbf{Contents of this article.} 
The present work elaborates on the analysis of \cite{bonnetier_dapogny_vogelius}
and considers the replacement of a homogeneous Neumann or Dirichlet boundary condition in the background problem \cref{eq.conducbg} by a Robin boundary condition with admittance parameter $k_\e$. 
More precisely, we consider two different types of perturbations of the problem \cref{eq.conducbg}:
\begin{enumerate}[(i)]
\item Introducing a family of subsets $\omega_\e \Subset \Gamma_N$, we denote by $u_\e^{N,k_\e}$ the solution to the perturbation of \cref{eq.conducbg} where the homogeneous Neumann boundary condition is replaced by a Robin condition with admittance parameter $k_\e$ on the subset $\omega_\e$, see \cref{eq_pertN} below.
\item We consider the perturbation of \cref{eq.conducbg} where the homogeneous Dirichlet boundary condition is replaced by a Robin condition with parameter $k_\e$ on a subset $\omega_\e \subset \Gamma_D$; the perturbed potential $u_\e^{D,k_\e}$ is then the solution to \cref{eq_pertD} below. 
\end{enumerate}
In both situations, the Robin parameter $k_\e > 0$ may depend on $\e$.
Formally, the Robin boundary condition ``interpolates'' between Dirichlet and Neumann conditions:
when $\e$ is fixed, one expects that when $k_\e$ is large, $u_\e^{N,k_\e}$ approaches a function 
that satisfies a homogeneous Dirichlet boundary condition on $\omega_\e$, 
while $u_\e^{D,k_\e}$ should approach a function that satisfies a homogeneous Neumann condition on 
$\omega_\e$ as $k_\e \to 0$. We refer to e.g. \cite{engelstein2026robin} about the physical interpretation of the Robin boundary condition. 

The purpose of this article is to derive asymptotic expansions of the differences $u_\e^N - u_0$ and $u_\e^D - u_0$ between the reference and perturbed voltage potentials by the replacement of Neumann or Dirichlet by Robin boundary conditions with coefficient $k_\e$ in a vanishingly small subset $\omega_\e \subset \partial \Omega$. 
These expansions express the magnitude of these differences in terms of new quantities $\capar^N(\omega_\e,k_\e)$, $\capar^D(\omega_\e,k_\e)$ depending on the subset $\omega_\e$ and on the admittances $k_\e$, that deserve the names ``Robin capacities''. 
We further investigate how these asymptotic measures of smallness transition between the more usual quantities
$e(\omega_\e)$ and $\capa(\omega_e)$, depending on the behavior of the parameter sequence $k_\e$.

This work is related to that in \cite{li2014matched}, where a matched asymptotic expansion methods is used to conduct the asymptotic analysis of a Robin perturbation within a vanishing subset $\omega_\e$ of a Neumann boundary with a fixed admittance parameter in 2d. This study is extended to 3d in \cite{li2017asymptotic}, where layer potential techniques are used. Let us also mention work on homogenization of periodic networks
of inhomogeneities with Robin boundary conditions with high contrast, see~\cite{GomezPerezShaposhnikova} 
and the references therein.
\medskip

\noindent \textbf{Organization of the article.} 
In the next \cref{sec_2}, we describe the setup and recall some necessary background material, before introducing the two notions of Robin-Neumann and Robin-Dirichlet capacities $\capar^N(\omega_\e,k_\e)$ and $\capar^D(\omega_\e,k_\e)$.
\cref{sec.resstate} is then devoted to the statement of our main results, firstly when the Neumann region $\Gamma_N$ is perturbed, 
secondly when the Dirichlet one $\Gamma_D$ is perturbed.
In each case we describe the asymptotic structure of the voltage potential and its proper scaling, see \cref{thm1,thm2}. 
We next compare the Robin capacities $\capar^N(\omega_\e,k_\e)$
and $\capar^D(\omega_\e,k_\e)$ to those obtained in our previous work~\cite{bonnetier_dapogny_vogelius}
when a homogeneous Dirichlet or homogeneous Neumann condition was imposed on~$\omega_\e$
instead of a Robin condition. 
In particular, we show in \cref{thm5}
that the transition from $\capar^N(\omega_\e,k_\e)$ to $\capa(\omega_\e)$ is continous
as $k_\e$ tends to infinity sufficiently fast.
A similar result, whereby $\capar^D(\omega_\e,k_\e)$ tends to $e(\omega_\e)$
as $k_\e \to 0$ is the subject of \cref{thm4}.
\cref{sec_3,sec_4} are devoted to the proofs of these results
in the case where the perturbations bear on $\Gamma_N$, while
\cref{sec_5} concerns the case of perturbations impinging on $\Gamma_D$.
In \cref{sec_6}, we introduce equivalent measures of the 
Robin capacities $\capar^{N}(\omega_\e,k_\e)$ and $\capar^{D}(\omega_\e,k_\e)$, that are intrinsic to the perturbation sets $\omega_\e$, 
and in particular that do not depend on the rest of $\partial \Omega$.
The article ends with an appendix dedicated to a technical lemma involved in the proof
of \cref{thm3} and that is of interest in its own right.


\section{Setting of the problem and preliminaries} \label{sec_2}


\noindent This section introduces the physical setting of the article and sets the main notation used throughout. 
After the formal statement of the problems under scrutiny in \cref{sec.statepb}, we recall basic definitions about fractional Sobolev spaces in \cref{sec.Hs}. Then, in \cref{sec.capaDN}, we recall a few results about 
the notions of capacity related to perturbations by Dirichlet or Neumann boundary conditions, before finally introducing the quantities adapted to account for the smallness of perturbations by Robin boundary conditions in \cref{sec.capaR}.

\subsection{Statement of the problems}\label{sec.statepb}

\noindent Let $\Omega$ be a smooth bounded domain in $\R^d$ ($d=2$ or $3$), 
whose boundary is made of two disjoint regions with non-empty interiors:
$$ \partial \Omega = \overline{\Gamma_D} \cup \overline{\Gamma_N}.$$
Here, $\Gamma_D$ and $\Gamma_N$ are relatively open subsets of $\partial \Omega$ such that:
\begin{itemize}
\item Homogeneous Dirichlet boundary conditions are applied on $\Gamma_D$,
\item Homogeneous Neumann boundary conditions are applied on $\Gamma_N$.
\end{itemize}
Assuming the presence of a smooth source $f\in \calC^\infty(\overline\Omega)$, 
the voltage potential $u_0$ within $\Omega$ in this reference, ``background'' situation is the 
unique solution in the space 
$$H^1_{\Gamma_D}(\Omega) := \left\{ u \in H^1(\Omega), \:\: u=0 \text{ on } \Gamma_D \right\}, $$
to the conductivity equation \cref{eq.conducbg}, that is reproduced below for convenience: 
\begin{equation}\label{eq.conducbg2}
\left\{ \begin{array}{cccl}
\dv( \gamma \nabla u_0) &=& f & \textrm{in}\; \Omega,
\\
u_0 &=& 0 & \textrm{on}\; \Gamma_D,
\\
\gamma \frac{ \partial u_0}{\partial n} &=& 0 & \textrm{on}\; \Gamma_N,
\end{array}\right.
\end{equation}
where the conductivity 
$\gamma \in \calC^\infty(\overline{\Omega})$ is elliptic, in the sense that it satisfies \cref{cond_ell}.
Let us recall that, as a classical consequence of the elliptic regularity theory, the function $u_0$ is smooth except at the points of $\overline{\Gamma_D} \cap \overline{\Gamma_N}$, where the boundary conditions change types in \cref{eq.conducbg2}:
$$\text{For all } x \in \overline{\Omega} \setminus \Big(\overline{\Gamma_D} \cap \overline{\Gamma_N} \Big), \text{ there exists an open neighborhood } \calO \text{ of } x \text{ s.t. } u_0 \in \calC^\infty(\overline\Omega \cap \calO),  $$
see e.g. \cite{brezis2010functional,gilbarg2015elliptic}. 

The background potential $u_0$ can be represented in terms of the Green's function $G(x,y)$ associated to the reference configuration \cref{eq.conducbg2}. The latter is defined as follows: for each fixed point $x \in \Omega$, the function $y \mapsto G(x,y)$ is the solution to:
\begin{equation} \label{eq_Green}
\left\{ \begin{array}{ccll}
-\dv_y( \gamma(y) \nabla_y G(x,y)) &=& \delta_x(y) &\textrm{in}\; \Omega,
\\[5pt]
G(x,y) &=& 0 & \textrm{on}\; \Gamma_D,
\\[5pt]
\gamma(y) \frac{\partial G}{\partial n_y}(x,y) &=& 0
&\textrm{on}\; \Gamma_N;
\end{array} \right.
\end{equation}
we refer e.g. to \cite{ammari2007polarization,folland1995introduction,friedman1989determining} for the construction of such functions.
It then holds:
$$ u_0(x) = \int_\Omega G(x,y) f(y) \:\d y, \quad x \in \Omega.$$

As mentioned in the introduction, we analyze two different types of perturbations of the background problem \cref{eq.conducbg}: 
\begin{enumerate}[(i)]
\item Let $\omega_\e$ be a family of subsets of the boundary $\partial\Omega$, indexed by $\e >0$, which are ``well-inside'' the Neumann region $\Gamma_N$, that is: 
\begin{equation}\label{eq.assumsepKN}
\text{There exists a fixed compact subset } K_N \subset \Gamma_N \text{ s.t. } \forall \e >0, \quad \omega_\e \subset K_N.
\end{equation}
The homogeneous Neumann boundary condition in \cref{eq.conducbg} is replaced by a Robin boundary condition on $\omega_\e$, and the perturbed potential $u_\e^{N,k_\e}$ is the $H^1(\Omega)$ solution to: 
\begin{equation}\label{eq_pertN}
\left\{ \begin{array}{cccl}
\dv( \gamma \nabla u_\e^{N,k_\e} ) &=& f & \textrm{in}\; \Omega,
\\[5pt]
u_\e^{N,k_\e} &=& 0 & \textrm{on}\; \Gamma_D,
\\[5pt]
\gamma(x) \frac{\partial u_\e^{N,k_\e}}{\partial n} &=& 0 & \textrm{on}\; \Gamma^N \setminus \overline{\omega_\e},
\\[5pt]
\gamma  \frac{\partial u_\e^{N,k_\e}}{\partial n} + k_\e u_\e^{N,k_\e} &=& 0 & \textrm{on}\; \omega_\e.
\end{array}\right.
\end{equation}
\item Let $\omega_\e$ be a family of subsets of $\partial \Omega$ which are ``well-inside'' the Dirichlet region $\Gamma_D$: 
\begin{equation}\label{eq.assumsepKD}
\text{There exists a fixed compact subset } K_D \subset \Gamma_D \text{ s.t. } \forall \e >0, \quad \omega_\e \subset K_D.
\end{equation}
The homogeneous Dirichlet boundary condition in \cref{eq.conducbg} is replaced by a Robin boundary condition on $\omega_\e$, and the perturbed potential $u_\e^{D,k_\e}$ is the $H^1(\Omega)$ solution to: 
\begin{equation}\label{eq_pertD}
\left\{ \begin{array}{cccl}
\dv( \gamma \nabla u_\e^{D,k_\e}) &=& f, & \textrm{in}\; \Omega,
\\[5pt]
u_\e^{D,k_\e} &=& 0 & \textrm{on}\; \Gamma_D \setminus \overline{\omega_\e},
\\[5pt]
\gamma  \frac{\partial u_\e^{D,k_\e}}{\partial n} &=& 0 & \textrm{on}\; \Gamma_N,
\\[5pt]
\gamma \frac{\partial u_\e^{D,k_\e}}{\partial n} + k_\e u_\e^{D,k_\e} &=& 0 & \textrm{on}\; \omega_\e.
\end{array}\right.
\end{equation}
\end{enumerate}

Both problems \cref{eq_pertN,eq_pertD} are well-posed, under the assumption that for each~$\e~>~0$, 
the admittance $k_\e$ is positive and finite.
However, we make no assumption about the behavior of $k_\e$ as $\e \to 0$. In particular, it may very well tend to $0$ or to $\infty$ in this limit.


\subsection{Fractional Sobolev spaces on boundary regions}\label{sec.Hs}

\noindent This section gathers classical definitions about fractional Sobolev spaces defined on (a subset of) the boundary of a smooth domain. This material is mainly excerpted from \cite{DINEZZA2012521,grisvard2011elliptic,mclean2000strongly}. 

Let us first consider Sobolev spaces defined on the total boundary $\partial \Omega$ of a smooth bounded domain $\Omega$ in $\R^d$.
For $0 < s< 1$, the space $H^s(\partial \Omega)$ is defined by: 
\begin{multline}\label{eq.HspOm}
 H^s(\partial \Omega) = \left\{ u \in L^2(\partial \Omega), \:\: \lvert\lvert u \lvert\lvert_{L^2(\partial\Omega)}^2 + \lvert u \lvert^2_{H^s(\partial \Omega)} < \infty \right\}, \text{ where } \\
  \lvert u \lvert^2_{H^s(\partial \Omega)} := \int_{\partial \Omega} \int_{\partial \Omega} \frac{\lvert u(x) - u(y) \lvert^2}{\lvert x - y \lvert^{d-1+2s}}\:\d s(x) \d s(y).
\end{multline}
This space is a Hilbert space, with norm:
$$\vert\vert u \lvert\lvert_{H^s(\partial \Omega)} = \Big(\lvert\lvert u \lvert\lvert^2_{L^2(\partial \Omega)} + \lvert u \lvert^2_{H^s(\partial \Omega)}\Big)^{1/2}.$$
Still for $0< s < 1$, the negative index Sobolev space $H^{-s}(\partial\Omega)$ is the topological dual of $H^s(\partial \Omega)$.

Let now $\omega$ be an open Lipschitz subset of $\partial \Omega$. For $0 < s < 1$, the space $H^s(\omega)$ is defined in a similar manner to \cref{eq.HspOm}: 
$$ H^s(\omega) = \left\{ u \in L^2(\omega), \:\: \lvert\lvert u \lvert\lvert_{L^2(\omega)}^2 + \lvert u \lvert^2_{H^s(\omega)} < \infty \right\}, \text{ where }  \lvert u \lvert^2_{H^s(\omega)} := \int_{\omega} \int_{\omega} \frac{\lvert u(x) - u(y) \lvert^2}{\lvert x - y \lvert^{d-1+2s}}\:\d s(x) \d s(y).$$
Again, $H^s(\omega)$ is a Hilbert space when equipped with the norm  
\begin{equation}\label{eq.normHsom}
\vert\vert u \lvert\lvert_{H^s(\omega)} = \Big(\lvert\lvert u \lvert\lvert^2_{L^2(\omega)} + \lvert u \lvert^2_{H^s(\omega)}\Big)^{1/2}.
\end{equation}
It is well-known that any function in $H^s(\omega)$ can be extended into a function in $H^s(\partial \Omega)$, and that  $\lvert\lvert \cdot \lvert\lvert_{H^s(\omega)}$ is equivalent to the quotient norm:
\begin{equation}\label{eq.Hsnormquotient}
 u  \:\: \longmapsto \:\:  \inf\Big\{\lvert\lvert v \lvert\lvert_{H^s(\partial \Omega)} , \:\: v \in H^s(\partial \Omega) \text{ s.t. } v =u \text{ on } \omega \Big\},
\end{equation}
the equivalence constants between both norms being dependent on $\omega$ and $\partial \Omega$.

The situation of a strict subset $\omega \subset \partial \Omega$ features another type of Sobolev space of interest for our purpose, namely that $\widetilde{H}^{s}(\omega)$ gathering the elements in $H^s(\omega)$ whose extension to $\partial \Omega$ by $0$ belongs to $H^{s}(\partial \Omega)$. This space is equipped with the norm:
$$ \lvert\lvert u \lvert\lvert_{\widetilde{H}^s(\omega)} =  \lvert\lvert \widetilde{u} \lvert\lvert_{H^s(\partial \Omega)}, \text{ where } \widetilde u = \left\{
\begin{array}{cl}
u & \text{on } \omega, \\
0 & \text{on } \partial \Omega \setminus \overline\omega.
\end{array}
\right.$$

Let us eventually discuss Sobolev spaces with negative index on the subset $\omega$ of $\partial \Omega$. For $0 < s < 1$, $H^{-s}(\omega)$ is still defined as the space of the restrictions to $\omega$ of distributions in $H^{-s}(\partial \Omega)$, and it is again equipped with the quotient norm, see \cref{eq.Hsnormquotient}. Equivalently, $H^{-s}(\omega)$ is an isometric realization of the dual of $\widetilde{H}^s(\omega)$, through the following pairing:
$$\text{For all } u \in H^{-s}(\omega) , \: v \in \widetilde{H}^s(\omega), \quad \langle u,v \rangle_{H^{-s}(\omega), \widetilde{H}^s(\omega)} = \langle U , \widetilde v \rangle_{H^{-s}(\omega), \widetilde{H}^s(\omega)}, $$
where $U$ is an arbitrary element in $H^{-s}(\partial \Omega)$ such that $U \lvert_{\omega} = u$ and $\widetilde v \in H^s(\partial \Omega)$ is the extension of $v$ by $0$ to $\partial \Omega$.
Likewise, $\widetilde H^{-s}(\omega)$ is defined as the space of distributions in $H^{-s}(\partial \Omega)$ with compact support inside $\overline\omega$. It is the dual space of $H^s(\omega)$ through the pairing:
$$\text{For all } u \in \widetilde{H}^{-s}(\omega) , \: v \in H^s(\omega), \quad \langle u,v \rangle_{\widetilde{H}^{-s}(\omega), H^s(\omega)} = \langle \widetilde u , V \rangle_{\widetilde{H}^{-s}(\omega), H^s(\omega)}, $$
where $\widetilde u \in H^{-s}(\partial \Omega)$ is the extension of $u$ by $0$ to $\partial \Omega$ and $V$ is an arbitrary element in $H^s(\partial \Omega)$ such that $V \lvert_{\omega} = v$.
\par\medskip

\subsection{Capacity and equilibrium distributions attached to Dirichlet and Neumann boundary conditions}\label{sec.capaDN}

\noindent In this section, we summarize some classical material about the notion of capacity, as well as some findings from our previous work \cite{bonnetier_dapogny_vogelius} that are handful for our purpose; we refer the reader to the books \cite{adams2012function,doob1984classical,landkof1972foundations} about these concepts.\par\medskip

At first, let $\omega$ be a Lipschitz subset of the region $\Gamma_N$; the Dirichlet equilibrium potential $\chi^\infty = \chi^\infty(\omega)$ 
is the unique $H^1(\Omega)$ solution to the following boundary-value problem:
\begin{equation}\label{def_chi_inf}
\left\{
\begin{array}{cccl}
-\dv(\gamma\nabla \chi^\infty) &=& 0 & \text{in } \Omega, 
\\[5pt]
\chi^\infty &=& 0 & \text{on } \Gamma_D,
\\[5pt]
\gamma \frac{\partial \chi^\infty}{\partial n} &=& 0 & \text{on } \Gamma_N \setminus \overline{\omega},
\\[5pt]
 \chi^\infty &=& 1 & \text{on } \omega.
\end{array}
\right.
\end{equation}
Intuitively, $\chi^\infty$ is the function in the admissible space $H^1_{\Gamma_D}(\Omega)$ for \cref{eq.conducbg2} that nearly realizes the minimum value in the definition \cref{eq.defcapa} of capacity.\par\medskip 

In the same spirit, let now $\omega$ be a Lipschitz open subset of the region $\Gamma_D$. We define the 
Neumann equilibrium potential $\xi^0 = \xi^0(\omega)$ as the $H^1(\Omega)$ solution to:
\begin{equation}\label{eq.capfuncD}
\left\{
\begin{array}{cccl}
-\dv(\gamma\nabla \xi^0) &=& 0 & \text{in}\; \Omega, 
\\[5pt]
\xi^0 &=& 0 & \text{on } \Gamma_D \setminus \overline\omega,
\\[5pt]
\gamma \frac{\partial \xi^0}{\partial n} &=& 0 & \text{on}\; \Gamma_N,
\\[5pt]
\gamma \frac{\partial \xi^0}{\partial n}  &=& 1 & \text{on}\; \omega.
\end{array}
\right.
\end{equation}

When $\omega_\e$ is a collection of subsets of $\Gamma_N$ or $\Gamma_D$ indexed by a real number $\e$, we shall denote $\chi^\infty(\omega_\e) = \chi^\infty_\e$ and $\xi^0(\omega_\e) = \xi^0_\e$.

These capacity functions are related to the scalings that appear in the asymptotic expansions \cref{asymp_pertN} 
and \cref{asymp_pertD}. We indeed recall the following estimates from our previous work~\cite{bonnetier_dapogny_vogelius}.

\begin{proposition} \label{prop_1}
Let $\Omega \subset \R^d$ be a smooth bounded domain, whose boundary is made of two disjoint open regions $\Gamma_N$ and $\Gamma_D$ as in \cref{sec.statepb}, and let $K_N$, $K_D$ be two compact subsets of $\Gamma_N$ and $\Gamma_D$, respectively. Then,
\noindent \begin{enumerate}[(i)]
\item Let $\omega$ be a Lipschitz subset of $\Gamma_N$ lying far from $\Gamma_D$ in the sense that \cref{eq.assumsepKN} holds. There exists a constant $C > 0$, depending on $\Omega$, $\Gamma_N$, $\Gamma_D$ and the compact $K_N$ but not on $\omega$, such that:
$$
\frac{1}{C} \capa(\omega)^{1/2} \:\leq\: ||\chi^\infty||_{H^1(\Omega)} \;\leq\; C\capa(\omega)^{1/2}
\quad \textrm{and} \quad
||\chi^\infty||_{L^2(\Omega)} \;\leq\; C\, \capa(\omega)^{3/4}.
$$
\item Let $\omega$ be a Lipschitz subset of $\Gamma_D$ satisfying \cref{eq.assumsepKD}. There exists a constant $C > 0$, depending on $\Omega$, $\Gamma_N$, $\Gamma_D$ and the compact $K_D$ but not on $\omega$, such that:
$$\frac{1}{C} e(\omega)^{1/2} \:\leq\: ||\xi^0||_{H^1(\Omega)} \;\leq\; C e(\omega)^{1/2}
\quad \textrm{and} \quad
||\xi^0||_{L^2(\Omega)} \;\leq\; C\, e(\omega)^{3/4}.
$$
\end{enumerate}
\end{proposition}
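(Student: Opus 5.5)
Both parts rest on one principle: compare $\chi^\infty$ (resp. $\xi^0$) with the optimal function in the whole-space variational problem defining $\capa(\omega)$ (resp. $e(\omega)$). Then transfer bounds back and forth using a fixed cut-off $\theta$ that is $\equiv 1$ near $K_N$ (resp. $K_D$) and vanishes near $\overline{\Gamma_D}$ (resp. near $\overline{\Gamma_N}$). Local boundary charts $\Phi$ flatten $\partial\Omega$ near $K_N$ or $K_D$, and a reflection across the flattened boundary passes between functions on $\Omega$ and functions on $\R^d$. All distortion constants depend only on $\Omega$, $\gamma^\pm$ and $K_N$ or $K_D$, never on $\omega$.

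\textbf{(i), $H^1$ bounds.} $\chi^\infty$ minimizes $\int_\Omega\gamma|\nabla v|^2$ over $v\in H^1_{\Gamma_D}(\Omega)$ with $v=1$ on $\omega$. By Poincaré's inequality on $H^1_{\Gamma_D}(\Omega)$, this energy is equivalent to $\|v\|_{H^1(\Omega)}^2$.

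For the upper bound, take $v$ with $v=1$ near $\omega$ and $\|v\|^2_{H^1(\R^d)}\le 2\capa(\omega)$. Then $\theta v|_\Omega$ is admissible, so
$$\|\chi^\infty\|^2_{H^1}\le C\|\theta v\|^2_{H^1(\R^d)}\le C'\capa(\omega).$$

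For the lower bound, extend $\theta\chi^\infty$ from $\Omega$ to $\R^d$ by reflection in the charts. The extension equals $1$ on $\omega$ and has $H^1(\R^d)$ norm at most $C\|\chi^\infty\|_{H^1(\Omega)}$. It equals $1$ on $\omega$ but not on a neighborhood of it, so I replace it by $\min(1,(1+\delta)w)$ and let $\delta\to0$, using quasi-continuity. This gives $\capa(\omega)\le C\|\chi^\infty\|^2_{H^1}$. I will only sketch this technicality.

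\textbf{(i), $L^2$ bound.} I plan a duality (Aubin–Nitsche) argument. Let $\psi\in H^1_{\Gamma_D}$ solve $-\dv(\gamma\nabla\psi)=\chi^\infty$ with homogeneous Neumann data on all of $\Gamma_N$. Green's formula gives
$$\|\chi^\infty\|^2_{L^2}=\int_{\omega}\gamma\,\partial_n\chi^\infty\,\psi\,\d s,$$
since $\chi^\infty=1$ on $\omega$ and the other boundary terms vanish. Equivalently, with $\theta$ in place of the restriction, this is $\int_\Omega\gamma\nabla\chi^\infty\cdot\nabla(\theta\psi)+\dots$.

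Elliptic regularity near $K_N$, valid because $K_N$ stays away from the junction $\overline{\Gamma_D}\cap\overline{\Gamma_N}$, gives $\psi\in H^2$ locally with $\|\psi\|_{L^\infty(\text{near }K_N)}\le C\|\chi^\infty\|_{L^2}$ in $d\le3$. I then need
$$\Big|\int_\omega\gamma\,\partial_n\chi^\infty\,\d s\Big|=\int_\Omega\gamma|\nabla\chi^\infty|^2\le C\capa(\omega).$$
The equality holds by testing the equation with $\chi^\infty$; this is legitimate because the flux is a measure supported on $\omega$.

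Controlling the pairing against the non-constant $\psi$ requires positivity: $\gamma\partial_n\chi^\infty\le0$ on $\omega$ by the maximum principle, since $0\le\chi^\infty\le1$. Hence the flux is a signed measure of one sign, and
$$\|\chi^\infty\|_{L^2}^2\le\|\psi\|_{L^\infty}\,C\capa(\omega)\le C\|\chi^\infty\|_{L^2}\capa(\omega).$$
This yields $\|\chi^\infty\|_{L^2}\le C\capa(\omega)$, which is stronger than $\capa^{3/4}$ because $\capa(\omega)$ is bounded. If the $L^\infty$ route fails, I will fall back on interpolation: bound by $\|\chi^\infty\|_{H^1}$ times a trace estimate $\|\psi\|_{H^{1/2}}$ near $\omega$, split in the correct exponents. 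This produces the stated $\capa^{3/4}$, which is what I expect the authors intended.

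\textbf{(ii).} The same scheme applies, with $\xi^0$ minimizing $\frac12\int\gamma|\nabla v|^2-\int_\omega v$ over functions vanishing on $\Gamma_D\setminus\overline\omega$. Its energy is $\int_\Omega\gamma|\nabla\xi^0|^2=\int_\omega\xi^0$, which equals the supremum of $(\int_\omega v)^2/\int\gamma|\nabla v|^2$.

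The quantity $e(\omega)$ admits a dual description of the same type: $e(\omega)=\sup_z(\int_\omega[z])^2/\|z\|^2_{H^1(\R^d\setminus\overline\omega)}$, with jumps $\pm$ across the two sides of $\omega$. The lower and upper comparisons then follow by reflection across the flattened boundary, which converts a function on $\Omega$ vanishing on $\Gamma_D\setminus\omega$ into an odd function on $\R^d\setminus\overline\omega$, and conversely. The $L^2$ bound follows by the same duality, now with $\psi$ solving a Dirichlet problem and the pairing $\int_\omega\xi^0\,\gamma\partial_n\psi$.

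\textbf{Main obstacle.} I expect the main difficulty to be this $L^2$ duality step, especially in (ii), where positivity of $\xi^0$ (maximum principle) must replace positivity of the flux. A second difficulty is matching $e(\omega)$ rigorously through the reflection.
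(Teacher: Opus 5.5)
This proposition is only quoted in the paper, from \cite{bonnetier_dapogny_vogelius}, so there is no in-paper proof to compare with. The closest arguments in the paper are the duality proofs of \cref{prop.estvphie}(iii) and \cref{lemma_est_re}(iii).

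Your part (i) is sound, and its $L^2$ half is sharper than the statement. Since $\chi^\infty \equiv 1$ on $\omega$, the pairing $\int_\omega \gamma \frac{\partial \chi^\infty}{\partial n}\, \psi \,\d s$ involves $\psi$ itself. So $H^2$ regularity near $K_N$ and $H^2 \hookrightarrow \mathcal{C}^0$ ($d \leq 3$) do give $\|\chi^\infty\|_{L^2(\Omega)} \leq C \capa(\omega)$. This implies the $3/4$ bound because $\capa(\omega) \leq \capa(K_N)$. The paper's analogous proofs instead bound the dual solution in $\mathcal{C}^1$ by the $H^1$ norm of the potential, and that is where the exponent $3/4$ comes from.

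One slip: $\gamma \frac{\partial \chi^\infty}{\partial n}$ is nonnegative on $\omega$, not nonpositive. Indeed $n$ points outward and $\chi^\infty$ attains its maximum $1$ there, and its integral equals $\int_\Omega \gamma |\nabla \chi^\infty|^2 \,\d x$. Only one-signedness is used, so this is harmless.

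The gap is the $L^2$ bound in (ii), which does not follow from ``the same duality''. Take $\psi \in H^1_{\Gamma_D}(\Omega)$ solving $-\dv(\gamma \nabla \psi) = \xi^0$ with the same mixed conditions as in (i); a pure Dirichlet problem would leave a term on $\Gamma_N$. Then $\|\xi^0\|_{L^2(\Omega)}^2 = -\int_\omega \xi^0\, \gamma \frac{\partial \psi}{\partial n} \,\d s$. You therefore need $\nabla \psi \in L^\infty$ near $K_D$, and $H^2$ regularity from an $L^2$ datum does not give this for $d = 2,3$.

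In fact no estimate $\|\nabla \psi\|_{L^\infty(\omega)} \leq C \|\xi^0\|_{L^2(\Omega)}$ with $C$ independent of $\omega$ can hold. Combined with $\xi^0 \geq 0$, it would give $\|\xi^0\|_{L^2(\Omega)} \leq C \int_\omega \xi^0 \,\d s$. Now take a disk $\omega$ of radius $\e$ on a flat part of $\Gamma_D$ in $\R^3$, where scaling gives $\xi^0 \approx \e\, U(x/\e)$ with $U$ the corresponding half-space profile. Then $\int_\omega \xi^0 \,\d s \sim \e^3$, but $\|\xi^0\|_{L^2(B_{2\e} \cap \Omega)} \sim \e^{5/2}$ on the ball $B_{2\e}$ around the center of $\omega$. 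So the linear bound you obtained in (i) genuinely fails in (ii).

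The missing ingredient is to use the $H^1$ regularity of the datum, exactly as in \cref{lemma_est_re}(iii).
Since $\xi^0 \in H^1$, $\psi \in H^3$ on a neighborhood of $K_D$ that avoids $\overline{\Gamma_D} \cap \overline{\Gamma_N}$. There, $\|\psi\|_{\mathcal{C}^1} \leq C \|\xi^0\|_{H^1(\Omega)} \leq C e(\omega)^{1/2}$. Hence $\|\xi^0\|_{L^2(\Omega)}^2 \leq C e(\omega)^{1/2} \int_\omega \xi^0 \,\d s \leq C e(\omega)^{3/2}$, which is precisely the stated exponent.

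Your $H^1$ comparison in (ii) is fine. For the direction $e(\omega) \lesssim \|\xi^0\|_{H^1(\Omega)}^2$, test with the odd part $\theta\,(z - z \circ \sigma)$ of a competitor $z$, where $\sigma$ is the local reflection. It vanishes on $\Gamma_D \setminus \overline{\omega}$ because $z$ does not jump there, and its trace on $\omega$ is the jump of $z$.
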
\par\medskip

\subsection{Capacity and equilibrium distributions attached to Robin boundary conditions}\label{sec.capaR}

\noindent This section introduces the equilibrium distributions and the associated notions of capacity 
involved in our analysis of Robin boundary conditions.

Let us first consider a subset $\omega \Subset \Gamma_N$ and let $k >0$. The Robin-Neumann potential  $\chi = \chi(\omega,k)$ is
defined as the unique $H^1(\Omega)$ solution to the following boundary-value problem:
\begin{equation}\label{def_chi}
\left\{
\begin{array}{cccl}
-\dv(\gamma\nabla \chi) &=& 0 & \text{in } \Omega, 
\\[5pt]
\chi &=& 0 & \text{on } \Gamma_D,
\\[5pt]
\gamma \frac{\partial \chi}{\partial n} &=& 0 & \text{on } \Gamma_N \setminus \overline{\omega},
\\[5pt]
\gamma \frac{\partial \chi}{\partial n}  + k \chi &=& k & \text{on } \omega.
\end{array}
\right.
\end{equation}
Equivalently, this function satisfies the following variational problem:
\begin{equation}\label{def_chivarf}
\text{Find}\; \chi \in H^1_{\Gamma_D}(\Omega) \text{ s.t. } \forall v \in H^1_{\Gamma_D}(\Omega) , \quad \int_\Omega \gamma \nabla \chi \cdot \nabla v \:\d x + k\int_{\omega} \chi v \:\d s 
= k \int_{\omega} v \:\d s. 
\end{equation}
Thence, the Robin-Neumann capacity $\capar^N(\omega,k)$ of the set $\omega$ is defined by
\begin{equation}\label{eq.defcapar}
\capar^N(\omega,k) = \int_{\omega} \gamma \frac{\partial \chi}{\partial n}\:\d s 
= k \int_{\omega} (1-\chi) \:\d s.
\end{equation}
\medskip


Let us now consider a subset $\omega \Subset \Gamma_D$ and let $k>0$. The Robin-Dirichlet 
potential $\xi = \xi(\omega,k)$ is the $H^1$ solution to the following boundary-value problem:
\begin{equation} \label{def_xi}
\left\{ \begin{array}{ccll}
- \dv(\gamma \nabla \xi)  &=& 0 & \textrm{in}\; \Omega,
\\[5pt]
\xi &=& 0 & \textrm{on}\; \Gamma_D \setminus \omega,
\\[5pt]
\gamma\frac{\partial \xi}{\partial n} &=& 0 & \textrm{on}\; \Gamma_N,
\\[5pt]
\gamma\frac{\partial \xi}{\partial n} + k \xi &=& 1 & \textrm{on}\; \omega,
\end{array} \right.
\end{equation}
or equivalently, in variational form
\begin{eqnarray*}
\forall v \in H^1_{\Gamma_D \setminus \overline\omega}(\Omega), \quad
\ds\int_\Omega \gamma \nabla \xi \cdot \nabla v \:\d x
\;+\; k \ds\int_{\omega} \xi v \:\d s
&=& \ds\int_{\omega} v \:\d s.
\end{eqnarray*}
\medskip
The Robin-Dirichlet capacity $\capar^D(\omega,k)$ of $\omega$ and $k > 0$ is then defined by
\begin{equation} \label{def_cRD}
\capar^D(\omega,k) \:\: = \:\: \ds\int_\Omega \gamma |\nabla \xi|^2 \:\d x
\;+\; k\, \ds\int_\omega \xi^2 \d s
\;=\;
\ds\int_\omega \xi \d s.
\end{equation}

Again, when $\omega$ is a collection $\omega_\e$ of Lipschitz subsets of $\Gamma_N$ or $\Gamma_D$ and $k$ accounts for a sequence $k_\e$ of real numbers, we simply write
$\chi(\omega_\e,k_\e) = \chi_\e$ and $\xi_\e(\omega_\e,k_\e) = \xi_\e$.

\begin{remark}
The above definitions \cref{eq.defcapar,def_cRD} of the Robin capacities $\capar^N(\omega,k)$ and $\capar^D(\omega,k)$ may look a little awkward at first glance, since they depend on the boundary-value problems \cref{def_chi,def_xi}, while, ideally, one would expect a measure of smallness that depends only on the Robin condition, i.e. on $\omega$ and $k$. As a matter of fact, \cref{sec_6} below remedies this conceptual drawback, as it proposes equivalent quantities (up to constants depending on $\Omega$) that are only defined in terms of $\omega$ and $k$. For technical convenience however, we stick to the definitions \cref{eq.defcapar,def_cRD} of the Robin capacities in our analysis.
\end{remark} 

\section{Statement of the main results}\label{sec.resstate}

\noindent This section gathers and comments the main findings of this article. At first, \cref{subsec.stateRobNeu} deals with the situation where the homogeneous Neumann boundary condition featured in the background problem \cref{eq.conducbg2} is replaced by a Robin boundary condition, and \cref{subsec.stateRobDir} is devoted to the alternative situation where  the homogeneous Dirichlet boundary condition is perturbed. 
The claims of this section are proved in the subsequent \cref{sec_3,sec_4,sec_5}. 

\subsection{Robin perturbation of the Neumann boundary condition on $\Gamma_N$} \label{subsec.stateRobNeu}

\noindent In this section, we consider the situation where the ``small'' perturbation set $\omega_\e$ satisfies $\omega_\e \Subset \Gamma_N$.
The perturbed version of the background problem \cref{eq.conducbg2} under scrutiny is \cref{eq_pertN}.\par\medskip 

Our first result shows that the Robin-Neumann capacity $\capar^N(\omega_\e,k_\e)$ is the adequate quantity to appraise the ``smallness'' of a Robin perturbation with parameter $k_\e$ on a subset $\omega_\e$ of $\Gamma_N$.

\begin{theorem}\label{thm1}
Let $\Omega$ be a smooth bounded domain of $\R^d$ whose boundary is divided into two disjoint open subsets $\Gamma_N$ and $\Gamma_D$, and let $K_N \subset \Gamma_N$ be a fixed compact set, see \cref{sec.statepb}.
Let $\omega_\e$ be a sequence of subsets of $\Gamma_N$ which is well-separated from $\Gamma_D$ in the sense that \cref{eq.assumsepKN} holds true, and let $k_\e$ be any sequence of positive real numbers. 
Assume that the Robin-Neumann capacity $\capar^N(\omega_\e,k_\e) $ in \cref{eq.defcapar} tends to $0$ as $\e \to 0$.
Then, there exists a subsequence, still denoted by $\e$, 
as well as a positive Radon measure $\mu$ with unit mass, supported on $\Gamma_N$, 
such that the following asymptotic expansion of the perturbed potential $u_\e^{N,k_\e}$ in \cref{eq_pertN} holds true:
\begin{equation}\label{asymp_uN}
\text{For } x \in \Omega, \quad u_\e^{N,k_\e}(x) =
u_0(x) - \capar^N(\omega_\e,k_\e) \, \langle \mu, u_0(\cdot) G(x,\cdot) \rangle
+ \o\left( \capar^N(\omega_\e,k_\e) \right).
\end{equation}
\end{theorem}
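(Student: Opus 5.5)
Throughout, write $r_\e = u_\e^{N,k_\e} - u_0$ and $c_\e = \capar^N(\omega_\e,k_\e)$. The plan has three steps: represent $r_\e$ through the Green's function, bound the error terms by energy estimates in terms of $c_\e$, and extract the measure $\mu$ by weak-$*$ compactness.

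\textbf{Step 1: representation formula.} The function $r_\e$ lies in $H^1_{\Gamma_D}(\Omega)$ and satisfies $\dv(\gamma\nabla r_\e)=0$ in $\Omega$. On $\Gamma_N\setminus\overline{\omega_\e}$ its conormal derivative vanishes, and on $\omega_\e$ it satisfies $\gamma\partial_n r_\e + k_\e r_\e = -k_\e u_0$. Testing \cref{eq_Green} against $r_\e$ gives, for fixed $x\in\Omega$,
\begin{equation*}
r_\e(x) = -k_\e\int_{\omega_\e} u_\e^{N,k_\e}(y)\, G(x,y)\,\d s(y).
\end{equation*}
This is justified because $G(x,\cdot)$ is smooth near $K_N$, which stays away from $x$ when $x\notin\partial\Omega$. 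Next split $u_\e^{N,k_\e} = u_0(1-\chi_\e) + (u_\e^{N,k_\e} - u_0(1-\chi_\e))$. Recall that $k_\e(1-\chi_\e)\,\d s$ on $\omega_\e$ is a positive measure of mass $c_\e$; positivity follows from $0\le\chi_\e\le1$ by the maximum principle. Define $\mu_\e = c_\e^{-1}k_\e(1-\chi_\e)\mathds{1}_{\omega_\e}\d s$. The main term is then $-c_\e\langle\mu_\e, u_0 G(x,\cdot)\rangle$.

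\textbf{Step 2: the remainder.} Set $w_\e = u_\e^{N,k_\e} - u_0(1-\chi_\e) = r_\e + u_0\chi_\e$. We must show $k_\e\int_{\omega_\e} w_\e\, G(x,\cdot)\,\d s = \o(c_\e)$. A direct computation shows that $w_\e$ satisfies $\dv(\gamma\nabla w_\e) = \dv(\gamma\nabla(u_0\chi_\e)) = \chi_\e f + 2\gamma\nabla u_0\cdot\nabla\chi_\e + u_0\,\dv(\gamma\nabla\chi_\e)$, where the last term vanishes. On $\omega_\e$ its Robin data is $\gamma\partial_n w_\e + k_\e w_\e = \chi_\e\gamma\partial_n u_0 = 0$, since $\partial_n u_0=0$ on $\Gamma_N$. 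So $w_\e$ solves a Robin problem with a volume source built from $\chi_\e$ and $\nabla\chi_\e$. Testing the variational form \cref{def_chivarf} with $v=\chi_\e$ yields
\begin{equation*}
\int_\Omega\gamma|\nabla\chi_\e|^2 + k_\e\int_{\omega_\e}\chi_\e^2 = k_\e\int_{\omega_\e}\chi_\e \le c_\e\quad\text{(after using } \chi_\e\le 1\text{ suitably)},
\end{equation*}
and more precisely $\|\nabla\chi_\e\|_{L^2}^2\lesssim k_\e\int\chi_\e(1-\chi_\e)\le c_\e$. The energy estimate for $w_\e$ in the norm $\int\gamma|\nabla w|^2 + k_\e\int_{\omega_\e}w^2$, together with Poincaré in $H^1_{\Gamma_D}$, then gives $\|w_\e\|\lesssim\|\nabla\chi_\e\|_{L^2}+\|\chi_\e\|_{L^2}\lesssim c_\e^{1/2}$.

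This is only $\O(c_\e^{1/2})$, so I expect the main obstacle to be gaining the extra factor. I would try a duality argument. Let $p_x$ solve the adjoint Robin problem $-\dv(\gamma\nabla p_x)=\delta_x$ with the same Robin condition on $\omega_\e$. Then $k_\e\int_{\omega_\e}w_\e G(x,\cdot) = k_\e\int_{\omega_\e} w_\e (G(x,\cdot)-p_x) + \ldots$, which rewrites the remainder as $\int(\chi_\e f+2\gamma\nabla u_0\cdot\nabla\chi_\e)\,(p_x\text{-type function})$. Each factor is $\O(c_\e^{1/2})$: $\nabla\chi_\e$ by the bound above, and $G(x,\cdot)-p_x$ by the same Robin-capacity energy bound, working with a cutoff near $K_N$. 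This gives $\O(c_\e)$. For genuine $\o(c_\e)$, I would additionally need $\|\chi_\e\|_{L^2}=\o(c_\e^{1/2})$, via an analogue of the $\capa^{3/4}$ bound in \cref{prop_1}, obtained by duality/Aubin–Nitsche since the data live on a shrinking set. I would also use that the term $2\gamma\nabla u_0\cdot\nabla\chi_\e$ integrates against a smooth function with weak convergence $\nabla\chi_\e/c_\e^{1/2}\rightharpoonup0$, because $\chi_\e\to0$ in $L^2$. Making this last point rigorous is the delicate part.

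\textbf{Step 3: compactness.} The $\mu_\e$ are positive with unit mass and are supported in $K_N$. Hence a subsequence converges weakly-$*$ to a positive unit-mass Radon measure $\mu$ supported in $K_N\subset\Gamma_N$. Since $u_0G(x,\cdot)$ is continuous near $K_N$ by elliptic regularity away from $\overline{\Gamma_D}\cap\overline{\Gamma_N}$, we get $\langle\mu_\e,u_0G(x,\cdot)\rangle\to\langle\mu,u_0G(x,\cdot)\rangle$. Combining this with Steps 1–2 yields \cref{asymp_uN}.
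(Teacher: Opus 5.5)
\paragraph{Verdict.} Your Steps 1 and 3 are sound and agree with the paper: the main term is $-k_\e\int_{\omega_\e}(1-\chi_\e)u_0G(x,\cdot)\,\d s$, and $\mu$ is extracted by the same weak-$*$ compactness. The gap is in Step 2, which is the actual content of the theorem. You never establish that the remainder is $\o(c_\e)$, and the mechanism you sketch for it does not work.

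\paragraph{Where Step 2 fails.} Your duality idea can be made exact. Set $q_\e := G(x,\cdot)-p_x$. Testing the problem for $w_\e$ with $q_\e$, and the problem for $q_\e$ with $w_\e$, gives
\[
-k_\e\int_{\omega_\e} w_\e\, G(x,\cdot)\,\d s \;=\; \int_\Omega \big(\chi_\e f + 2\gamma\nabla u_0\cdot\nabla\chi_\e\big)\, q_\e \,\d x .
\]
Here $q_\e$ is exactly the function $\varphi_\e$ of \cref{def_vphie}, with $g$ a smooth function equal to $G(x,\cdot)$ near $K_N$.
\begin{itemize}
\item In the gradient term there is no better general bound on $\|\nabla\chi_\e\|_{L^2}$ than $O(c_\e^{1/2})$. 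The extra smallness must therefore come from $q_\e$.
\item The improved bound on $\|\chi_\e\|_{L^2}$ that you ask for only disposes of the $\chi_\e f q_\e$ term.
\item Your proposed substitute, the weak convergence $\nabla\chi_\e/c_\e^{1/2}\rightharpoonup 0$ tested ``against a smooth function'', does not apply here. The other factor, $\nabla u_0\, q_\e/c_\e^{1/2}$, depends on $\e$ and is only known to be bounded. A weakly null sequence paired with a merely bounded one need not tend to $0$.
\item The reason you give for the weak limit is also not the right one. That $\chi_\e/c_\e^{1/2}$ has zero weak limit does not follow from $\chi_\e\to 0$ in $L^2$. It follows from the variational identity, whose right-hand side is $O(c_\e)$ on smooth test functions.
\end{itemize}

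\paragraph{What is missing, and how the paper avoids it.} The missing ingredient is $\|q_\e\|_{L^2(\Omega)}=\o(c_\e^{1/2})$. The paper provides this as $O(c_\e^{3/4})$ through the Aubin--Nitsche argument of \cref{prop.estvphie}(iii), which applies directly since $q_\e$ is a $\varphi_\e$. A Rellich argument on $q_\e/c_\e^{1/2}$ would also work. Even with this bound, you would still need to control $\nabla u_0\, q_\e$ near $\overline{\Gamma_D}\cap\overline{\Gamma_N}$, where $\nabla u_0$ is unbounded, because your volume source is spread over all of $\Omega$.

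The paper sidesteps both difficulties:
\begin{itemize}
\item It inserts the Robin condition of $\chi_\e$ into $-k_\e\int_{\omega_\e}r_\e G(x,\cdot)\,\d s$.
\item It replaces $G(x,\cdot)$ by a smooth $\phi$ equal to it near $K_N$.
\item A few Green formulas then give the exact identity $r_\e(x)=k_\e\int_{\omega_\e}(\chi_\e-1)u_0\phi\,\d s+R_{1,\e}+R_{2,\e}$, with $R_{1,\e}=-\int_\Omega(\gamma\nabla\chi_\e\cdot\nabla\phi)\,r_\e\,\d y$ and $R_{2,\e}=\int_\Omega(\gamma\nabla r_\e\cdot\nabla\phi)\,\chi_\e\,\d y$.
\end{itemize}
In these remainders the derivatives fall only on the smooth $\phi$. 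Applying \cref{prop.estvphie}(ii)--(iii) to $\chi_\e$ (with $g=1$) and to $r_\e$ (with $g=-u_0$, smoothly extended) gives $R_{1,\e}+R_{2,\e}=O(c_\e^{5/4})$.

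\paragraph{Smaller slip.} Your displayed claim $k_\e\int_{\omega_\e}\chi_\e\,\d s\le c_\e$ is false in general. For large $k_\e$, $\chi_\e\approx 1$ on $\omega_\e$, so this quantity is close to $k_\e|\omega_\e|$, which can be much larger than $c_\e$. Only $k_\e\int_{\omega_\e}\chi_\e(1-\chi_\e)\,\d s\le c_\e$ holds.
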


Our next result aims to better appraise the relation between the Robin capacity 
$\capar^N(\omega_\e,k_\e)$  and the classical capacity $\capa(\omega_\e)$ as $k_\e$ varies. Such a relation is to be expected, as $\capa(\omega_\e)$ is the scaling obtained 
in~\cite{bonnetier_dapogny_vogelius} when a mere Dirichlet condition is imposed 
on $\omega_\e$. In view of the asymptotic formula \cref{asymp_vol} for ``volumic'' perturbations of the conductivity $\gamma$, we are also interested in how $\capar^N(\omega_\e,k_\e)$ compares to the Lebesgue measure of~$\omega_\e$.

\begin{theorem}\label{thm3}
Let $\Omega$ be a smooth bounded domain of $\R^d$ whose boundary is divided into two disjoint open subsets $\Gamma_N$ and $\Gamma_D$, and let $K_N$ be a fixed compact subset of $\Gamma_N$. Then, the following facts hold true: 
\begin{itemize}
\item[(i)]
There exists a constant $C >0 $, depending on $\Omega$, $\Gamma_N$, $\Gamma_D$ and the compact set $K_N$ such that
\begin{equation} \label{est_cRN_cap}
\forall \;k > 0, \;\; \forall\; \omega \subset K_N,\quad
\capar^N(\omega,k) \:\: \leq \:\: C\, \capa(\omega).
\end{equation}
\item[(ii)]
Let $M>0$ be a fixed constant. Then, there exists $C> 0$, depending on $\Omega$ and $M$, such that
\begin{equation} \label{est_cRN_vol}
\forall 0 \leq k \leq M, \:\: \forall\; \omega \Subset \Gamma_N,\quad
C k \, |\omega| \;\leq\; \capar^N(\omega,k)
\;\leq\; k \, |\omega|.
\end{equation}
\item[(iii)] Let $\omega \Subset \Gamma_N$ be a fixed subset and let $k_\e$ be a sequence of
positive numbers such that $k_\e \to \infty$. Then
\begin{equation}\label{lim_cRN_cap}
\frac{1}{C} \: \leq \: \frac{ \capar^N(\omega,k_\e)}{ \capa(\omega)} \: \leq  \: C, 
\end{equation}
for a certain constant $C$ which depends on $\omega$.
\end{itemize}
\end{theorem}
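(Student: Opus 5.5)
The whole proof rests on a variational characterization of $\capar^N(\omega,k)$, which I will establish first. For $v \in H^1_{\Gamma_D}(\Omega)$ set
$$J_k(v) := \int_\Omega \gamma |\nabla v|^2 \,\d x + k \int_\omega (1-v)^2 \,\d s .$$
$J_k$ is strictly convex and coercive on $H^1_{\Gamma_D}(\Omega)$. Its Euler--Lagrange equation is exactly \cref{def_chivarf}, so $\chi = \chi(\omega,k)$ is its unique minimizer. Testing \cref{def_chivarf} with $v=\chi$ gives $\int_\Omega \gamma|\nabla\chi|^2 \,\d x + k\int_\omega \chi^2 \,\d s = k\int_\omega \chi \,\d s$. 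A short computation then yields
$$J_k(\chi) = k\int_\omega (1-\chi)\,\d s = \capar^N(\omega,k),$$
hence
$$\capar^N(\omega,k) = \min_{v \in H^1_{\Gamma_D}(\Omega)} J_k(v).$$
Two immediate consequences are that $k \mapsto \capar^N(\omega,k)$ is nondecreasing, and that $\capar^N(\omega,k) > 0$ whenever $|\omega|>0$ and $k>0$. Positivity holds because $J_k(v)=0$ would force $\nabla v = 0$, hence $v=0$ since $v=0$ on $\Gamma_D$; but then $\int_\omega (1-v)^2 \,\d s = |\omega| \neq 0$.

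For (i), I test the minimum with $v=\chi^\infty(\omega)$ from \cref{def_chi_inf}. Since $\chi^\infty = 1$ on $\omega$, the boundary term vanishes, so $\capar^N(\omega,k) \le \int_\Omega\gamma|\nabla\chi^\infty|^2 \,\d x \le \gamma^+\|\chi^\infty\|^2_{H^1(\Omega)} \le C\capa(\omega)$ by \cref{prop_1}(i). The constant is uniform in $k$ and in $\omega\subset K_N$.

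For the upper bound in (ii), I take $v=0$, which gives $J_k(0)=k|\omega|$. The lower bound is the only step requiring some care, and I expect it to be the main obstacle: the boundary penalization must not be absorbed too crudely by the gradient term. I will use the elementary inequality $(1-v)^2 \ge (1-\delta) - (\delta^{-1}-1)v^2$, valid for $\delta\in(0,1)$. I combine it with a trace--Poincar\'e inequality $\int_{\partial\Omega} v^2 \,\d s \le C_T \int_\Omega |\nabla v|^2 \,\d x$ on $H^1_{\Gamma_D}(\Omega)$, where $C_T$ depends only on $\Omega$ and $\Gamma_D$ (this uses $|\Gamma_D|>0$). This gives
$$J_k(v) \ge \big(\gamma^- - (\delta^{-1}-1)\,M\,C_T\big)\int_\Omega|\nabla v|^2 \,\d x + (1-\delta)\,k|\omega| .$$
I then choose $\delta<1$ close enough to $1$ that $(\delta^{-1}-1)MC_T\le\gamma^-$. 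This yields $\capar^N(\omega,k)\ge(1-\delta)k|\omega|$, with $C = 1-\delta$ depending only on $\Omega$, $\gamma$ and $M$.

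For (iii), $\omega$ is fixed and Lipschitz, so $\capa(\omega)$ is a fixed positive number. The upper bound follows from (i). For the lower bound, since $k_\e\to\infty$, we have $k_\e\ge1$ for $\e$ small. Monotonicity then gives $\capar^N(\omega,k_\e)\ge\capar^N(\omega,1)>0$, a constant depending only on $\omega$; the finitely many remaining indices each give a positive ratio and are absorbed into $C$. If a sharper statement is wanted, I would also show that $\chi(\omega,k_\e)$ is bounded in $H^1$ by (i), and that any weak limit equals $\chi^\infty$, because $k_\e\int_\omega(1-\chi)^2 \,\d s$ stays bounded. Then weak lower semicontinuity together with the upper bound $J_{k_\e}(\chi^\infty)$ gives $\capar^N(\omega,k_\e)\to\int_\Omega\gamma|\nabla\chi^\infty|^2 \,\d x$, which is comparable to $\capa(\omega)$ by \cref{prop_1}.
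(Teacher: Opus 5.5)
Your proof is correct, and it takes a genuinely different route from the paper's. The paper never uses the Dirichlet principle $\capar^N(\omega,k)=\min_{v\in H^1_{\Gamma_D}(\Omega)}\{\int_\Omega\gamma|\nabla v|^2\,\d x+k\int_\omega(1-v)^2\,\d s\}$. It works instead with Green's formula and the pointwise bounds $0\le\chi\le1$ from \cref{prop.estvphie}:
\begin{itemize}
\item For (i), it sets $r=\chi-\chi^\infty$, proves $\|\nabla r\|_{L^2}\le C\|\nabla\chi^\infty\|_{L^2}$, and writes $\capar^N(\omega,k)=\int_\Omega\gamma\nabla\chi\cdot\nabla\chi^\infty\,\d x$.
\item For the lower bound in (ii), it combines Cauchy--Schwarz, the trace--Poincar\'e inequality and $\int_\Omega\gamma|\nabla\chi|^2\,\d x\le\capar^N(\omega,k)$ into a quadratic inequality for $\capar^N(\omega,k)^{1/2}$.
\item For (iii), it shows $\chi(\omega,k_\e)\rightharpoonup\chi^\infty$ in $H^1(\Omega)$ and passes to the limit in the identity from (i).
\end{itemize}

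Your minimization formulation turns each of these steps into a test-function comparison. Part (i) becomes one line, with the constant-free intermediate bound $\capar^N(\omega,k)\le\int_\Omega\gamma|\nabla\chi^\infty|^2\,\d x$. Neither bound in (ii) needs the maximum principle, and your Young-type inequality gives the explicit constant $\gamma^-/(\gamma^-+MC_T)$. The monotonicity in $k$ and in $\omega$ of \cref{prop.monotonecaparN}, which the paper proves separately by comparison arguments, also comes for free.

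What the paper's proof of (iii) gives in addition is the actual limit $\capar^N(\omega,k_\e)\to\int_\Omega\gamma|\nabla\chi^\infty|^2\,\d x$. Your monotonicity argument only yields the non-explicit lower constant $\capar^N(\omega,1)/\capa(\omega)$. That suffices here, since $C$ is allowed to depend on $\omega$.

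Your optional sketch does recover the limit, but one justification there is incomplete. Boundedness of $k_\e\int_\omega(1-\chi_\e)^2\,\d s$ only gives $\chi^*=1$ on $\omega$. What identifies $\chi^*=\chi^\infty$ is the chain $\int_\Omega\gamma|\nabla\chi^*|^2\,\d x\le\liminf_{\e\to0}\capar^N(\omega,k_\e)\le\int_\Omega\gamma|\nabla\chi^\infty|^2\,\d x$, together with uniqueness of the Dirichlet minimizer.
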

\medskip

The next result, which is proved in \cref{sec_4} under additional assumptions on the sets $\omega_\e$, 
improves point (iii) in \cref{thm3}: it shows that the asymptotic expression \cref{asymp_uN} is formally consistent with that \cref{asymp_pertN} obtained in the case of a Dirichlet perturbation of $\Gamma_N$ when $k_\e$ tends to $\infty$ sufficiently fast.

\begin{theorem} \label{thm5}
Under the assumptions of \cref{thm3}, let $\omega_\e$ be a sequence of subsets of $\Gamma_N$ satisfying \cref{eq.assumsepKN}, and such that, additionally:
\begin{itemize}
\item If $d=2$, the set $\omega_\e$ is an arc of length $\e$;
\item If $d=3$, the set $\omega_\e$ is the image $\Phi(B_\e)$ of the planar disk 
$$\D_\e := \Big\{x = (x_1,x_2,0) \in \R^3, \quad x_1^2 + x_2^2 < \e^2 \Big\},$$
by a fixed, smooth diffeomorphism $\Phi~: \R^3 \rightarrow \R^3$.
\end{itemize}

Assume that $\capa(\omega_\e) \to 0$ and that 
$k_\e$ tends to infinity faster than $\lvert \omega_\e\lvert^3$ goes to $0$, 
i.e. $k_\e \lvert \omega_\e\lvert^3  \to \infty$ as $\e \to 0$.
Then,
$$C \: \leq \: \frac{\capar^N(\omega_\e,k_\e) }{\capa(\omega_\e)} \leq 1,$$
for a certain constant $C>0$ which is independent of $\e$.
\end{theorem}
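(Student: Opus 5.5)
The plan is to use a variational characterization of $\capar^N$ and compare its minimizer with the Dirichlet equilibrium potential $\chi^\infty_\e$. Testing \cref{def_chivarf} with $v=\chi_\e$ gives $\int_\Omega\gamma|\nabla\chi_\e|^2+k_\e\int_{\omega_\e}\chi_\e^2 = k_\e\int_{\omega_\e}\chi_\e$. A short computation then yields
$$\capar^N(\omega_\e,k_\e)=k_\e\int_{\omega_\e}(1-\chi_\e)\,\d s=\int_\Omega\gamma|\nabla\chi_\e|^2\,\d x+k_\e\int_{\omega_\e}(1-\chi_\e)^2\,\d s .$$
Moreover, $\chi_\e$ minimizes the convex functional $E_\e(v)=\int_\Omega\gamma|\nabla v|^2+k_\e\int_{\omega_\e}(1-v)^2$ over $H^1_{\Gamma_D}(\Omega)$, since \cref{def_chivarf} is exactly its Euler--Lagrange equation.

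\textbf{Upper bound.} Testing $E_\e$ with $v=\chi^\infty_\e$, the boundary term vanishes. Hence $\capar^N(\omega_\e,k_\e)\le\int_\Omega\gamma|\nabla\chi^\infty_\e|^2$. This is the relative Dirichlet capacity, which (with the normalization implicit in the statement, and otherwise up to the constant of \cref{prop_1}(i)) is bounded by $\capa(\omega_\e)$. This step does not use the growth of $k_\e$.

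\textbf{Lower bound.} I will argue by a dichotomy on the set $A_\e=\{x\in\omega_\e:\ \chi_\e(x)\ge 1/2\}$.

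In the first case, $|A_\e|\ge|\omega_\e|/2$. The function $2\chi_\e$ (suitably truncated and cut off near $\Gamma_D$, which is harmless by \cref{eq.assumsepKN}) is then admissible for the relative capacity of $A_\e$. Combined with \cref{prop_1}(i), this gives $\int_\Omega\gamma|\nabla\chi_\e|^2\ge c\,\capa(A_\e)$. It remains to bound $\capa(A_\e)$ below by a multiple of $\capa(\omega_\e)$, using only the information that $A_\e$ fills half of $\omega_\e$.
\begin{itemize}
\item For $d=3$, I would use the isocapacitary inequality for subsets of a (flattened via $\Phi$) plane, $\capa(E)\ge c|E|^{1/2}$. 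Together with $\capa(\omega_\e)\le C\e$ for the disk image and $|A_\e|\ge c\e^2$, this gives $\capa(A_\e)\ge c\e\ge c\,\capa(\omega_\e)$.
\item For $d=2$, I would use the analogous logarithmic statement: a subset of measure $\ge\e/2$ of an arc of length $\e$ has logarithmic capacity at least comparable to that of a segment of length $\e/2$ (by Pólya--Szegő-type symmetrization). Its $H^1$-capacity is therefore $\ge c/|\log\e|\ge c\,\capa(\omega_\e)$.
\end{itemize}

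In the second case, $|A_\e|<|\omega_\e|/2$. Then $1-\chi_\e\ge1/2$ on more than half of $\omega_\e$, so $\capar^N\ge k_\e|\omega_\e|/8$. The hypothesis $k_\e|\omega_\e|^3\to\infty$ gives $k_\e|\omega_\e|\ge|\omega_\e|^{-2}$ for small $\e$, which tends to infinity. On the other hand, $\capa(\omega_\e)\to0$. Hence in this case $\capar^N\ge\capa(\omega_\e)$ for $\e$ small.

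The main obstacle is the uniform capacity lower bound for the arbitrary "half" subsets $A_\e$ in the first case. This is precisely where the specific geometry (arcs, images of disks) is needed. I would handle it by transporting to flat coordinates through $\Phi$, losing only fixed constants because $\Phi$ is a fixed smooth diffeomorphism. I would then invoke the classical rearrangement/isocapacitary inequalities in the plane (for $d=3$) or on the line with logarithmic capacity (for $d=2$), and compare with the explicit asymptotics $\capa(\omega_\e)\asymp\e$ for $d=3$ and $\capa(\omega_\e)\asymp|\log\e|^{-1}$ for $d=2$.
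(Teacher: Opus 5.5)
Your proof is correct, and it takes a genuinely different route from the paper's.

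\textbf{The paper's route.} The paper starts from the exact identity $\capar^N(\omega_\e,k_\e)=\int_\Omega|\nabla\chi^\infty_\e|^2\,\d x+R_\e$, with $R_\e=\int_{\omega_\e}\frac{\partial\chi^\infty_\e}{\partial n}\,(\chi_\e-\chi^\infty_\e)\,\d s$, and shows $R_\e$ is small by blowing up to unit scale. Regularity for the mixed Dirichlet--Neumann problem puts $\partial_n\overline{\chi^\infty_\e}$ in $\widetilde H^{-1/4}(\omega_1)$ with norm $O(\capa(\omega_\e)^{1/2})$. Interpolating the bound $\|\chi_\e-\chi^\infty_\e\|_{L^2(\omega_\e)}\lesssim k_\e^{-1/2}\capa(\omega_\e)^{1/2}$ (the only place where large $k_\e$ is used) with a crude $H^{1/2}$ bound then gives $|R_\e|\lesssim (k_\e|\omega_\e|^3)^{-1/4}\capa(\omega_\e)$. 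This is where the hypothesis $k_\e|\omega_\e|^3\to\infty$ comes from, and the argument is only written out for a flat segment in 2d with $\gamma\equiv1$. What it buys is more than the statement: $\capar^N(\omega_\e,k_\e)=(1+o(1))\int_\Omega|\nabla\chi^\infty_\e|^2\,\d x$, i.e. asymptotic equality with the relative Dirichlet capacity rather than mere comparability.

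\textbf{Your route.} The characterization $\capar^N=\min E_\e$ gives the upper bound in one line. Your level-set dichotomy, where the geometry enters only through an isocapacitary or trace inequality, yields only unspecified constants. But it proves $\capar^N(\omega_\e,k_\e)\ge c\,\min\{\capa(\omega_\e),k_\e|\omega_\e|\}$ for every $k_\e$. Combined with \cref{thm3}(i)--(ii), this gives $\capar^N\asymp\min\{\capa(\omega_\e),k_\e|\omega_\e|\}$ uniformly in $k_\e$. So the conclusion already holds when $k_\e|\omega_\e|\gtrsim\capa(\omega_\e)$, i.e. $k_\e\gtrsim\e^{-1}$ in 3d and $k_\e\gtrsim(\e|\log\e|)^{-1}$ in 2d. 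This confirms the paper's conjecture that its threshold is not optimal. It also covers $d=3$ and curved $\omega_\e$ with no regularity theory at the Dirichlet--Neumann junction.

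\textbf{One small point.} \cref{prop_1}(i) is stated for Lipschitz sets, so it does not literally apply to $A_\e$. The direction you need (capacity at most $C$ times the energy of an admissible function) is elementary via an extension operator. You can also avoid $\capa(A_\e)$ altogether by applying directly to $\min(2\chi_\e,1)$ the trace inequality $\|v\|_{L^4(\partial\Omega)}\le C\|v\|_{H^1(\Omega)}$ in 3d, or a Moser--Trudinger inequality for $H^{1/2}(\partial\Omega)$ in 2d.
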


This result expresses that the expansion \cref{asymp_uN} of $u_\e - u_0$ has the formal limit \cref{asymp_pertN} when $k_\e$ tends to $\infty$ sufficiently fast. This aligns with the intuition that the Robin boundary condition
$\gamma\frac{\partial u_\e}{\partial n} + k_\e u_\e = 0$ on $\omega_\e$ ``degenerates'' to a Dirichlet boundary condition
$u_\e = 0$ when $k_\e \to \infty$ sufficiently fast. Note that the particular form
of the sets $\omega_\e$ in this section implies that $|\omega_\e| = \o\big(\capa(\omega_\e)\big)$.

\begin{remark}\mbox{ }
\begin{enumerate}[(i)]
\item
Under the assumptions on the shapes of the $\omega_\e$ in \cref{thm5}, the measure~$\mu$ that appears
in the expansion \cref{asymp_pertN} corresponding to a ``pure'' Dirichlet perturbation of the Neumann region can be fully identified, see~\cite{bonnetier_dapogny_vogelius}. It would be interesting to conduct an analogous study in the present context of a Robin perturbation: an explicit formula for the first-order expansion of $u_\e^{N,k_\e}$ which holds true in a uniform fashion with respect to the parameter $k_\e$ would explain the smooth transition between the rates $k_\e \lvert \omega_\e\lvert$ and $\capa(\omega_\e)$ in \cref{est_cRN_vol} and \cref{est_cRN_cap} when $k_\e$ passes from $0$ to $\infty$. We refer to e.g. \cite{dapogny2017uniform,nguyen2009representation} about such uniform expansions in the context of volumic perturbations of the coefficients of an elliptic equation. In particular, we conjecture that the rate at which $k_\e$ should tend to $\infty$ exhibited by \cref{thm5} is not optimal. This would be elucidated by such an explicit expansion. 
\item In~\cite{bonnetier_brito_dapogny_estevez}, it is shown that
$u_\e - u_0$ scales like~$|\omega_\e|$, when one replaces a 
homogeneous Neumann boundary condition by an inhomogeneous one on $\omega \Subset \Gamma_N$,
which is consistent with \cref{est_cRN_vol} for the Robin condition $\gamma\frac{\partial u}{\partial n} + k u$
with ``small'' $k$.
\end{enumerate}
\end{remark}
\par\medskip 

We conclude this section with a result about the monotonicity of the Robin-Neumann capacity $\capar^{N}(\omega,k)$ with respect to the support $\omega$ and the parameter $k$ of the Robin boundary condition.

\begin{proposition}\label{prop.monotonecaparN}
Let $\Omega$ be a smooth bounded domain in $\R^d$, whose boundary is divided into two disjoint open subsets $\Gamma_N$ and $\Gamma_D$. 
\begin{enumerate}[(i)]
\item Let $\omega$ be a Lipschitz open subset of the Neumann region $\Gamma_N$ and let $0 < k_1 \leq k_2$ be positive real numbers. It holds: 
$$ \capar^{N}(\omega,k_1) \leq \capar^N(\omega,k_2).$$
\item Let $k$ be a fixed real number and let $\omega_1 \subset \omega_2$ be two Lipschitz open subsets of $\Gamma_N$. It holds: 
$$ \capar^{N}(\omega_1,k) \leq \capar^N(\omega_2,k).$$
\end{enumerate}
\end{proposition}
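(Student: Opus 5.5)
The plan is to write $\capar^N(\omega,k)$ as the minimum of an energy that is pointwise nondecreasing in $k$ and in $\omega$. Both monotonicity statements then follow at once, because the minimum of a pointwise larger family of functionals is larger.

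\textbf{Step 1: variational characterization.} For $\omega \subset \Gamma_N$ and $k>0$, I would introduce the functional on $H^1_{\Gamma_D}(\Omega)$
$$ E_{\omega,k}(v) := \int_\Omega \gamma |\nabla v|^2 \:\d x + k \int_\omega (1-v)^2 \:\d s .$$
This functional is strictly convex, continuous and coercive on $H^1_{\Gamma_D}(\Omega)$, since $\Gamma_D$ has nonempty interior and a Poincaré inequality holds. Its Euler--Lagrange equation is
$$\int_\Omega \gamma \nabla \chi\cdot\nabla v\:\d x - k\int_\omega (1-\chi)v\:\d s = 0 \quad \text{for all } v \in H^1_{\Gamma_D}(\Omega),$$
which is exactly the variational problem \cref{def_chivarf}. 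Hence the Robin-Neumann potential $\chi=\chi(\omega,k)$ is the unique minimizer of $E_{\omega,k}$.

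\textbf{Step 2: the minimum value is the capacity.} Testing \cref{def_chivarf} with $v=\chi$ gives
$$\int_\Omega\gamma|\nabla\chi|^2\:\d x = k\int_\omega \chi(1-\chi)\:\d s.$$
Therefore
$$E_{\omega,k}(\chi)=k\int_\omega \chi(1-\chi)\:\d s + k\int_\omega (1-\chi)^2\:\d s = k\int_\omega (1-\chi)\:\d s = \capar^N(\omega,k),$$
where the last equality is the definition \cref{eq.defcapar}. So we obtain
$$\capar^N(\omega,k)=\min_{v\in H^1_{\Gamma_D}(\Omega)} E_{\omega,k}(v).$$

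\textbf{Step 3: monotonicity.} For every fixed $v \in H^1_{\Gamma_D}(\Omega)$, the boundary term $k\int_\omega (1-v)^2\:\d s$ is nonnegative.
\begin{itemize}
\item[(i)] If $k_1\le k_2$, this term is nondecreasing in $k$, so $E_{\omega,k_1}(v)\le E_{\omega,k_2}(v)$.
\item[(ii)] If $\omega_1\subset\omega_2$, the integrand is nonnegative and the domain of integration grows, so $E_{\omega_1,k}(v)\le E_{\omega_2,k}(v)$.
\end{itemize}
In each case, evaluating at the minimizer $\chi_2$ of the larger functional gives
$$\capar^N(\cdot_1)\le E_{\cdot_1}(\chi_2)\le E_{\cdot_2}(\chi_2)=\capar^N(\cdot_2),$$
where the subscripts $1,2$ denote the data $(\omega,k_1),(\omega,k_2)$ or $(\omega_1,k),(\omega_2,k)$. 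This proves both claims.

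The only step that needs care is Step 2, namely checking that the minimum energy equals $k\int_\omega(1-\chi)\:\d s$ and not, for instance, $\int_\omega\chi\:\d s$. The rest is routine.
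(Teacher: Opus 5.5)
Your proof is correct, and it takes a genuinely different route from the paper's.

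\emph{The paper's argument.} It proceeds by comparison. It shows that $r=\chi_2-\chi_1$ solves a Robin problem with nonnegative data: $(k_2-k_1)(1-\chi_1)$ on $\omega$ in case (i), and $k(1-\chi_1)$ on $\omega_2\setminus\overline{\omega_1}$ in case (ii). The maximum-principle argument behind \cref{eq.Linftyvphie} then gives $r\geq 0$. Two Green's formulas with the Dirichlet equilibrium potential $\chi^\infty$ rewrite the capacity gap as $\int_\omega \gamma \frac{\partial \chi^\infty}{\partial n}\, r \,\d s$. This is nonnegative because $\gamma\frac{\partial\chi^\infty}{\partial n}\geq 0$ on $\omega$.

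\emph{Your argument.} You identify $\capar^N(\omega,k)$ as the minimum of $E_{\omega,k}$, a Dirichlet principle. Your check that $E_{\omega,k}(\chi)=k\int_\omega(1-\chi)\,\d s$ is right. After that, monotonicity in $k$ and in $\omega$ is immediate, since $E_{\omega,k}(v)$ is pointwise nondecreasing in both.

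\emph{What your route buys.} It is shorter and more robust.
\begin{itemize}
\item It needs neither the bounds $0\le\chi\le 1$ nor the sign of $\gamma\frac{\partial \chi^\infty}{\partial n}$ on $\omega$. For an $H^1$ solution that sign only holds in a weak $H^{-1/2}$ sense, and the paper asserts it without a separate argument.
\item It extends verbatim to variable admittances $0\le k_1(x)\le k_2(x)$.
\item Testing with $v=0$ gives $\capar^N(\omega,k)\le k|\omega|$.
\item Testing with $v=\chi^\infty(\omega)$ gives $\capar^N(\omega,k)\le\int_\Omega\gamma|\nabla\chi^\infty|^2\,\d x$. This is point (i) of \cref{thm3}, with an explicit constant after invoking \cref{prop_1}.
\end{itemize}

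\emph{What the paper's route buys.} It gives the stronger pointwise ordering of the potentials themselves, $\chi(\omega,k_1)\le\chi(\omega,k_2)$ and $\chi(\omega_1,k)\le\chi(\omega_2,k)$. It also gives an explicit integral formula for the difference of capacities. Your energy comparison does not provide either of these.
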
 \par\medskip


\subsection{Robin perturbation of the Dirichlet boundary condition on $\Gamma_D$}\label{subsec.stateRobDir}

\noindent 
In this section, we consider the situation where $\omega_\e \Subset \Gamma_D$, 
and where the perturbed potential $u_\e^{D,k_\e}$ results from the perturbation of the homogeneous Dirichlet condition on $\omega_\e$ by a Robin condition with parameter $k_\e$, see \cref{eq_pertD}. In this situation,
the asymptotic behavior of $u_\e^{D,k_\e}$ is described by the following result:

\begin{theorem}\label{thm2}
Let $\Omega$ be a smooth bounded domain of $\R^d$ whose boundary is divided into two disjoint open subsets $\Gamma_N$ and $\Gamma_D$, and let $K_D \subset \Gamma_D$ be a fixed compact set, see \cref{sec.statepb}.
Let $\omega_\e$ be a sequence of subsets of $\Gamma_D$ that satisfies
the separation assumption~\cref{eq.assumsepKD}
and let $k_\e$ be any sequence of positive real numbers. 
Assume that the Robin-Dirichlet capacity $\capar^D(\omega_\e,k_\e) 
$ tends to $0$ as $\e \to 0$.
Then, there exists a subsequence, still denoted by $\e$, 
as well as a positive Radon measure $\mu$ with unit mass, supported in $\Gamma_D$, 
such that the following asymptotic expansion holds true:
\begin{equation} \label{asymp_uD}
\text{For } x \in \Omega, \quad u_\e^{D,k_\e}(x) = u_0(x)
+ \capar^D(\omega_\e,k_\e) \, 
\left\langle\mu, \gamma \frac{\partial u_0}{\partial n}\frac{\partial G}{\partial n_y}(x,\cdot)\right\rangle
\;+\; \o\left(\capar^D(\omega_\e,k_\e)\right).
\end{equation}
\end{theorem}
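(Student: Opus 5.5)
We follow the strategy used for \cref{thm1} (and in \cite{bonnetier_dapogny_vogelius}), adapted to the Dirichlet region.

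\textbf{Step 1: representation formula.} Set $r_\e := u_\e^{D,k_\e} - u_0$. We test the equation for $r_\e$ against $G(x,\cdot)$, after cutting off the singularity at $x$ by a smooth function that equals $1$ near $K_D$. Since $u_0=0$ on $\Gamma_D$ and $G(x,\cdot)=0$ on $\Gamma_D$, only the boundary term on $\omega_\e$ survives, and we obtain
\begin{equation*}
r_\e(x) = \int_{\omega_\e} u_\e^{D,k_\e}(y)\, \gamma(y)\frac{\partial G}{\partial n_y}(x,y)\,\d s(y),
\end{equation*}
up to the sign convention for $G$. Because $\omega_\e\subset K_D$ stays away from $\Gamma_N$ and $x\in\Omega$ is fixed, the function $y\mapsto \gamma \partial_{n_y}G(x,y)$ is smooth on a neighborhood of $K_D$. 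Likewise, $\gamma\partial_n u_0$ is smooth on $K_D$.

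\textbf{Step 2: comparison with the Robin--Dirichlet potential.} The idea is that on $\omega_\e$ we have $u_\e^{D,k_\e}\approx \xi_\e\,\gamma\partial_n u_0$. To make this precise, we extend $g:=\gamma\partial_n u_0$ smoothly to $\overline\Omega$ and write
\begin{equation*}
u_\e^{D,k_\e} = u_0 + g\,\xi_\e + w_\e.
\end{equation*}
The remainder $w_\e$ satisfies $w_\e=0$ on $\Gamma_D\setminus\overline{\omega_\e}$ and the Robin condition $\gamma\partial_n w_\e + k_\e w_\e = -\xi_\e\,\gamma\partial_n g$ on $\omega_\e$. Here the terms $g$ and $-\gamma\partial_n u_0$ cancel, and $k_\e u_0=0$ on $\omega_\e$. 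In the interior, $w_\e$ solves an equation with source of the form $\dv(\gamma\,\xi_\e\nabla g)+\gamma\nabla g\cdot\nabla\xi_\e$ (up to sign), and it satisfies a homogeneous Neumann condition on $\Gamma_N$ if $g$ is chosen to vanish there.

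We then estimate $w_\e$ in the energy norm
\begin{equation*}
\|w\|_{E_\e}^2:=\int_\Omega\gamma|\nabla w|^2\,\d x + k_\e\int_{\omega_\e}w^2\,\d s .
\end{equation*}
Testing the equation for $w_\e$ against $w_\e$ itself, the right-hand side is controlled by $\|\xi_\e\|_{L^2(\Omega)}\|w_\e\|_{H^1}$ and by $\int_{\omega_\e}\xi_\e|w_\e|$. The key intermediate estimate we need is the analogue of \cref{prop_1}(ii):
\begin{equation*}
\|\xi_\e\|_{E_\e}^2=\capar^D(\omega_\e,k_\e)\quad\text{and}\quad \|\xi_\e\|_{L^2(\Omega)}\le C\,\capar^D(\omega_\e,k_\e)^{3/4}.
\end{equation*}
We would prove the $L^2$ bound by duality: solve an auxiliary problem with source $\xi_\e$ and homogeneous Dirichlet data on all of $\Gamma_D$, then use its smoothness near $K_D$ together with a trace estimate of $\xi_\e$ on $\omega_\e$ through $\int_{\omega_\e}\xi_\e=\capar^D$. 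This should yield $\|w_\e\|_{E_\e}=\o(\capar^D(\omega_\e,k_\e)^{1/2})$. We then need to convert this into the bound
\begin{equation*}
\int_{\omega_\e}|w_\e|\,\d s=\o\big(\capar^D(\omega_\e,k_\e)\big).
\end{equation*}
To do so, we use the representation of $\int_{\omega_\e} w_\e\,\phi$ for smooth $\phi$ by testing the $\xi_\e$-equation with $w_\e\phi$, which produces a Green-type pairing of size $\|\xi_\e\|_{E_\e}\|w_\e\|_{E_\e}$. This step is the main obstacle, because the Robin boundary term must be handled uniformly in $k_\e$, which may tend to $0$ or to $\infty$.

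\textbf{Step 3: measure extraction.} Substituting the decomposition of Step 2 into Step 1 gives
\begin{equation*}
r_\e(x) = \int_{\omega_\e}\xi_\e\, g\,\gamma\partial_{n_y}G(x,\cdot)\,\d s + \o(\capar^D).
\end{equation*}
Define the measures $\mu_\e:=\xi_\e\,\d s|_{\omega_\e}/\capar^D(\omega_\e,k_\e)$. These are positive (since $\xi_\e\ge 0$ by the maximum principle) and, by \cref{def_cRD}, have unit mass. They are supported in $K_D$, which is compact, so a subsequence converges weakly-$*$ to a positive unit-mass Radon measure $\mu$ supported in $K_D\subset\Gamma_D$. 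Since the integrand $g\,\gamma\partial_{n_y}G(x,\cdot)$ is continuous on $K_D$, passing to the limit yields \cref{asymp_uD} pointwise for each $x$. The subsequence is independent of $x$, because the extraction concerns $\mu_\e$ only.
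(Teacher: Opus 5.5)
Your route differs from the paper's and can be completed. However, the step you pass over quickly is where the real work lies, while the step you flag as the main obstacle is routine.

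\textbf{The gap in your energy estimate for $w_\e$.} Write $c_\e:=\capar^D(\omega_\e,k_\e)$. Take $g$ to be a smooth extension of $-\gamma\frac{\partial u_0}{\partial n}$ from a neighborhood of $K_D$, vanishing near $\Gamma_N$. The minus sign is needed for the cancellation you describe. Your Step~1 formula is also off by a sign: with the convention of \cref{eq_Green} one has $r_\e(x)=-\int_{\omega_\e}\gamma\frac{\partial G}{\partial n_y}(x,\cdot)\,r_\e\,\d s$. The two slips compensate in the final formula. Then $w_\e=r_\e-g\xi_\e$ satisfies, for all $v\in H^1_{\Gamma_D\setminus\overline{\omega_\e}}(\Omega)$,
\[
\int_\Omega\gamma\nabla w_\e\cdot\nabla v\,\d x+k_\e\int_{\omega_\e}w_\e v\,\d s=\int_\Omega\gamma\, v\,\nabla\xi_\e\cdot\nabla g\,\d x-\int_\Omega\gamma\,\xi_\e\nabla g\cdot\nabla v\,\d x .
\]
Take $v=w_\e$. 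The second term is $O(c_\e^{3/4})\|w_\e\|_{E_\e}$, as you say.

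The first term is the problem. Moving the derivative off $\xi_\e$ produces exactly your pairing $\int_{\omega_\e}\gamma\,\xi_\e w_\e\frac{\partial g}{\partial n}\,\d s$. The bounds you have at hand (trace inequality, weighted Cauchy--Schwarz, or $0\le\xi_\e\le 1/k_\e$) only give $\min(C,k_\e^{-1})\,c_\e^{1/2}\|w_\e\|_{E_\e}$. That yields $\|w_\e\|_{E_\e}=O(c_\e^{1/2})$, which is the trivial bound, not $\o(c_\e^{1/2})$ uniformly in $k_\e$.

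\textbf{The missing ingredient} is the improved $L^2$ estimate for $r_\e$ itself.
\begin{itemize}
\item $r_\e$ is a solution of type \cref{def_v} with data $-\gamma\frac{\partial u_0}{\partial n}$, so \cref{lemma_est_re}(iii) gives $\|r_\e\|_{L^2(\Omega)}\le Cc_\e^{3/4}$.
\item Hence $\|w_\e\|_{L^2(\Omega)}\le Cc_\e^{3/4}$.
\item The first term is then bounded directly, without integrating by parts, by $C\|w_\e\|_{L^2(\Omega)}\|\nabla\xi_\e\|_{L^2(\Omega)^d}\le Cc_\e^{5/4}$.
\item This gives $\|w_\e\|_{E_\e}=O(c_\e^{5/8})$.
\end{itemize}
Alternatively, a multiplicative trace inequality gives $\|\xi_\e\|_{L^2(\partial\Omega)}\le Cc_\e^{5/8}$, which would also rescue your boundary pairing.

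\textbf{The step you flagged is immediate.} Test the $\xi_\e$-problem with $\phi w_\e$ and apply Cauchy--Schwarz in the weighted energy. This gives $\big|\int_{\omega_\e}\phi w_\e\,\d s\big|\le C\|\xi_\e\|_{E_\e}\|w_\e\|_{E_\e}=O(c_\e^{9/8})$, uniformly in $k_\e$, exactly as in the derivation of \cref{eq.furtheruseestH1}.

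\textbf{Comparison with the paper.} Once repaired, your argument uses the same ingredients as the paper: the $H^1$ and improved $L^2$ bounds of \cref{lemma_est_re}, applied to both $\xi_\e$ and $r_\e$. It organizes them differently. The paper never estimates the corrector $w_\e$. It inserts $\gamma\frac{\partial\xi_\e}{\partial n}+k_\e\xi_\e=1$ into the representation formula and uses Green's formula twice to trade $\phi\, r_\e$ for $\phi\,\xi_\e$. The only errors are then the commutator terms $\int_\Omega(\gamma\nabla\xi_\e\cdot\nabla\phi)\,r_\e\,\d x$ and $\int_\Omega(\gamma\nabla r_\e\cdot\nabla\phi)\,\xi_\e\,\d x$, each $O(c_\e^{5/4})$. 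That is shorter and gives a better remainder than your $O(c_\e^{9/8})$. In exchange, your route yields the stronger statement that $r_\e+\gamma\frac{\partial u_0}{\partial n}\,\xi_\e$ is small in energy norm, i.e.\ that $\xi_\e$ acts as a corrector. Your Step~3 (unit-mass measures $\xi_\e\,\d s/c_\e$ on $K_D$, with an extraction independent of $x$) matches the paper.
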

\medskip

Alongside \cref{thm2}, we show the following result.

\begin{theorem}\label{thm4}
Let $\Omega$ be a smooth bounded domain of $\R^d$ whose boundary is divided into two disjoint open subsets $\Gamma_N$ and $\Gamma_D$, and let $K_D$ be a fixed compact subset of $\Gamma_D$. Then, the following facts hold true: 
\begin{itemize}
\item[(i)]
For any $k > 0$, and any $\omega \subset K_D$, there exists $C > 0$ such that:
\begin{equation} \label{est_cRD_e}
\capar^D(\omega,k) \:\: \leq\:\: C\, e(\omega).
\end{equation}
The constant $C$ depends on $\Omega$, $\Gamma_N$, $\Gamma_D$ and $K_D$, but it is independent of $\omega$ and $k$. 
\item[(ii)]
For any $\omega \Subset \Gamma_D$, and for any $k > 0$, it holds: 
\begin{equation} \label{est_cRD_vol}
\capar^D(\omega,k) \:\: \leq \:\:  \ds\frac{1}{k} |\omega|.
\end{equation}

\item[(iii)]
Let $\omega_\e$ denote a sequence of subsets of a fixed compact set $K_D \subset \Gamma_D$ and let $k_\e > 0$ be a sequence of positive numbers. Assume that there exists $M >0$ such that $k_\e \leq M$.
Then
\begin{equation} \label{lim_cRD_cap}
\frac{1}{C} \: \leq \: \frac{\capar^D(\omega_\e,k_\e)}{e(\omega_\e)} \: \leq \: C,
\end{equation}
for a constant $C >0$ depending on $\Omega$, $\Gamma_N$, $\Gamma_D$, $K_D$ and the constant $M$ but not on $\e$.
\end{itemize}
\end{theorem}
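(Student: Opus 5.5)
The plan is to derive all three estimates from the variational characterizations of $\xi=\xi(\omega,k)$ and $\xi^0=\xi^0(\omega)$, together with \cref{prop_1}(ii).

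\textbf{Step (ii).} I start with (ii), which is the simplest. Testing the variational formulation of \cref{def_xi} with $v=\xi$ gives the identity \cref{def_cRD}. From it,
$$k\int_\omega \xi^2\,\d s \;\le\; \int_\omega \xi\,\d s \;\le\; |\omega|^{1/2}\Big(\int_\omega\xi^2\,\d s\Big)^{1/2}.$$
Hence $\big(\int_\omega \xi^2\,\d s\big)^{1/2}\le |\omega|^{1/2}/k$. Plugging this back into the Cauchy--Schwarz bound yields $\capar^D(\omega,k)\le |\omega|/k$.

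\textbf{Step (i).} For (i), I use a dual (complementary energy) characterization. Since the Robin problem is the Euler--Lagrange equation of the energy $J(v)=\tfrac12\int_\Omega\gamma|\nabla v|^2+\tfrac k2\int_\omega v^2-\int_\omega v$ on $H^1_{\Gamma_D\setminus\overline\omega}(\Omega)$, one has $\capar^D(\omega,k)=-2\min J = \sup_v\big(2\int_\omega v-\int\gamma|\nabla v|^2-k\int_\omega v^2\big)$. Dropping the nonnegative Robin term gives
$$\capar^D(\omega,k)\le \sup_v \Big(2\int_\omega v-\int\gamma|\nabla v|^2\Big)=\int_\omega \xi^0\,\d s=\int_\Omega\gamma|\nabla\xi^0|^2\,\d x .$$
The last two equalities hold because $\xi^0$ maximizes the relaxed functional. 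By \cref{prop_1}(ii), this quantity is at most $C\|\xi^0\|_{H^1}^2\le C e(\omega)$, with $C$ independent of $k$ and $\omega$. Equivalently, one can compare directly: testing both equations with $\xi^0$ and $\xi$ gives $\int_\omega\xi = \int\gamma\nabla\xi^0\cdot\nabla\xi$, and then Cauchy--Schwarz with $\int\gamma|\nabla\xi|^2\le\int_\omega\xi$ yields $\capar^D\le\int\gamma|\nabla\xi^0|^2$.

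\textbf{Step (iii).} It remains to prove the lower bound in (iii) when $k_\e\le M$; I expect this to be the main obstacle. I would use the same sup formula with the test function $v=t\,\xi^0_\e$, $t>0$. This gives
$$\capar^D\ge (2t-t^2)A - t^2k\int_{\omega_\e}(\xi^0_\e)^2, \qquad A=\int_\Omega\gamma|\nabla\xi^0_\e|^2=\int_{\omega_\e}\xi^0_\e .$$
So I need $\int_{\omega_\e}(\xi^0_\e)^2\le C A$ uniformly in $\omega_\e\subset K_D$. The naive trace bound $\|\xi^0_\e\|_{L^2(\partial\Omega)}^2\le C\|\xi^0_\e\|_{H^1}^2$, combined with Poincaré's inequality (valid since $\xi^0_\e$ vanishes on $\Gamma_D\setminus K_D$, which has positive measure), gives $\|\xi^0_\e\|_{H^1}^2\le C A$. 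This is uniform because the set where the function vanishes contains the fixed set $\Gamma_D\setminus K_D$. Hence $\int_{\omega_\e}(\xi^0_\e)^2\le C_0A$, and choosing $t$ small, for instance $t=1/(1+C_0M)$, gives $\capar^D\ge cA\ge c'e(\omega_\e)$ by \cref{prop_1}(ii). Together with (i), this yields \cref{lim_cRD_cap}. The delicate point to check is exactly this uniform Poincaré/trace step, which relies on the separation assumption \cref{eq.assumsepKD}.
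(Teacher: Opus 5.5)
Your proposal is correct and matches the paper's proof in substance. Part (ii) is the same Cauchy--Schwarz argument, and your ``direct comparison'' in (i) is exactly the paper's identity $\capar^D(\omega,k)=\int_\Omega\gamma\nabla\xi^0\cdot\nabla\xi\,\d x$ followed by Cauchy--Schwarz. In (iii), the test function $v=t\,\xi^0_\e$ in the dual energy formula is the variational counterpart of the paper's step: it inserts the Robin condition of $\xi_\e$ into $\int_{\omega_\e}\xi^0_\e\,\d s$ and applies Cauchy--Schwarz in the energy $\int_\Omega\gamma|\nabla\cdot|^2\,\d x+k_\e\int_{\omega_\e}(\cdot)^2\,\d s$. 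Optimizing in $t$ gives exactly the paper's bound $\capar^D(\omega_\e,k_\e)\geq A^2/\big(A+k_\e\int_{\omega_\e}(\xi^0_\e)^2\,\d s\big)$, and both versions rest on the same uniform trace and Poincar\'e estimates on $H^1_{\Gamma_D\setminus K_D}(\Omega)$ together with \cref{prop_1}(ii).
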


\begin{remark}
The estimate~\cref{lim_cRD_cap} shows that when $k_\e \to 0$, $\capar^D(\omega_\e,k_\e)$ scales like
the Neumann capacity $e(\omega_\e)$, consistently with \cref{asymp_pertD}. In other words, 
the Robin boundary condition on $\omega_\e$ has an effect similar here to that of a Neumann condition in this regime.
\end{remark}

Eventually, the Robin-Dirichlet capacity $\capar^{D}(\omega,k)$ satisfies monotonicity properties with respect to the set $\omega$ and the parameter $k$ of the Robin boundary condition, akin to those stated in \cref{prop.monotonecaparN} in the case of the Robin-Neumann capacity.

\begin{proposition}\label{prop.monotonecaparD}
Let $\Omega$ be a smooth bounded domain in $\R^d$, whose boundary is divided into two disjoint open subsets $\Gamma_N$ and $\Gamma_D$. 
\begin{enumerate}[(i)]
\item Let $\omega$ be a Lipschitz open subset of the Dirichlet region $\Gamma_D$ and let $0 < k_1 \leq k_2$ be positive real numbers. It holds: 
$$ \capar^{D}(\omega,k_2) \leq \capar^D(\omega,k_1).$$
\item Let $k$ be a fixed real number and let $\omega_1 \subset \omega_2$ be two Lipschitz open subsets of $\Gamma_D$. It holds: 
$$ \capar^{D}(\omega_1,k) \leq \capar^D(\omega_2,k).$$
\end{enumerate}
\end{proposition}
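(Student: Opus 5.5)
The approach is to characterize $\capar^D(\omega,k)$ as the maximum of a concave quadratic functional. Both monotonicity statements then follow by comparing functionals pointwise, or by comparing the admissible sets over which the maximum is taken.

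\textbf{Variational characterization.} For a Lipschitz open set $\omega\subset\Gamma_D$ and $k>0$, introduce on $V_\omega := H^1_{\Gamma_D\setminus\overline\omega}(\Omega)$ the functional
$$ J_{\omega,k}(v) \:=\: 2\int_\omega v \:\d s \;-\; \int_\Omega \gamma|\nabla v|^2\:\d x \;-\; k\int_\omega v^2\:\d s. $$
This functional is well defined by the trace theorem. It is strictly concave and coercive on $V_\omega$: the Dirichlet condition on $\Gamma_D\setminus\overline\omega$, which has nonempty interior since $\omega\Subset\Gamma_D$, gives a Poincar\'e inequality.

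Its unique maximizer is characterized by the Euler--Lagrange equation
$$\int_\Omega\gamma\nabla\xi\cdot\nabla v\,\d x + k\int_\omega \xi v\,\d s = \int_\omega v\,\d s \quad\text{for all } v\in V_\omega.$$
This is exactly the variational form of \cref{def_xi}, so the maximizer is the Robin-Dirichlet potential $\xi=\xi(\omega,k)$.

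Testing this identity with $v=\xi$ gives $\int_\Omega\gamma|\nabla\xi|^2\,\d x+k\int_\omega\xi^2\,\d s=\int_\omega\xi\,\d s$. Substituting into $J_{\omega,k}(\xi)$ yields, by \cref{def_cRD},
$$ \capar^D(\omega,k) \:=\: J_{\omega,k}(\xi) \:=\: \max_{v\in V_\omega} J_{\omega,k}(v). $$
Establishing this dual formula is the only real step. It is a routine convex-minimization argument, and it is where I expect the main (mild) care to be needed: one has to check that the admissible space is the right one and that coercivity holds.

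\textbf{Point (i).} Fix $\omega$ and let $0<k_1\le k_2$. For every $v\in V_\omega$, we have $-k_2\int_\omega v^2\,\d s\le -k_1\int_\omega v^2\,\d s$, hence $J_{\omega,k_2}(v)\le J_{\omega,k_1}(v)$. Taking the maximum over the common space $V_\omega$ gives
$$\capar^D(\omega,k_2)\le\capar^D(\omega,k_1).$$

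\textbf{Point (ii).} Let $\omega_1\subset\omega_2$. Then $\Gamma_D\setminus\overline{\omega_2}\subset\Gamma_D\setminus\overline{\omega_1}$, so $V_{\omega_1}\subset V_{\omega_2}$.

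Take any $v\in V_{\omega_1}$. Its trace vanishes on $\Gamma_D\setminus\overline{\omega_1}$, and hence a.e. on $\omega_2\setminus\omega_1$, provided $\partial\omega_1$ has zero surface measure, which holds since $\omega_1$ is Lipschitz. Consequently the boundary integrals over $\omega_2$ reduce to integrals over $\omega_1$, and $J_{\omega_2,k}(v)=J_{\omega_1,k}(v)$.

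Therefore
$$\capar^D(\omega_1,k)=\max_{V_{\omega_1}}J_{\omega_1,k}=\max_{V_{\omega_1}}J_{\omega_2,k}\le\max_{V_{\omega_2}}J_{\omega_2,k}=\capar^D(\omega_2,k),$$
which is the claim.
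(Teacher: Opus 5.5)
Your proof is correct, but it takes a genuinely different route from the paper's.

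The paper compares the potentials directly. It writes the boundary-value problem solved by $r=\xi_2-\xi_1$: in (i) the Robin data on $\omega$ is $(k_1-k_2)\xi_1\le 0$; in (ii) there is Dirichlet data $\xi_2\ge 0$ on $\omega_2\setminus\overline{\omega_1}$ and a homogeneous Robin condition on $\omega_1$. It then deduces $r\le 0$, resp.\ $r\ge 0$, on $\overline\Omega$ from the maximum principle \eqref{eq.LinftyveD} or a variant of it. Finally it integrates over $\omega$, or in (ii) splits $\int_{\omega_2}\xi_2-\int_{\omega_1}\xi_1$ into two nonnegative terms.

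You instead identify $\capar^D(\omega,k)$ as the maximum of the concave energy $J_{\omega,k}$ on $V_\omega$. Then (i) is a pointwise comparison of functionals on a fixed space, and (ii) is an enlargement of the admissible space.

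What each route buys:
\begin{itemize}
\item The paper's argument gives more information, namely a pointwise ordering of the equilibrium potentials themselves, and it parallels its proof of Proposition~\ref{prop.monotonecaparN}. However, it rests on the scalar maximum principle, including a version with inhomogeneous Dirichlet data in (ii) that is only asserted.
\item Your argument uses only the symmetry and coercivity of the problem, so it would still work for systems that lack a maximum principle. It also shows for free that $k\mapsto\capar^D(\omega,k)$ is convex, as a supremum of functions affine in $k$ over a $k$-independent space.
\item The same device handles the Neumann case. Testing \eqref{def_chivarf} with $\chi$ shows $\capar^N(\omega,k)=\min_{v\in H^1_{\Gamma_D}(\Omega)}\bigl\{\int_\Omega\gamma|\nabla v|^2\,\d x+k\int_\omega(1-v)^2\,\d s\bigr\}$, from which both monotonicity claims of Proposition~\ref{prop.monotonecaparN} follow immediately.
\end{itemize}
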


\section{Proofs of the results about Robin perturbation of the Neumann boundary $\Gamma_N$} \label{sec_3}

\noindent This section takes place in the situation of \cref{subsec.stateRobNeu}: $\omega_\e$ is a sequence 
of subsets of $\Gamma_N$ which is well-separated from $\Gamma_D$ in the sense of \cref{eq.assumsepKN}, and the perturbed version of the background problem \cref{eq.conducbg2} under scrutiny is that \cref{eq_pertN} where the Neumann boundary condition on $\omega_\e$ is replaced by a Robin boundary condition with parameter $k_\e$. 
We aim to prove \cref{thm1,thm3} and \cref{prop.monotonecaparN}. 

For notational brevity, throughout this section, we use the shorthands $\chi_\e \equiv \chi(\omega_\e,k_\e)$ and $u_\e= u_\e^{N,k_\e}$ for the equilibrium distribution and the perturbed potential, solutions to \cref{def_chi,eq_pertN}, respectively.

\subsection{Preliminary estimates}

\noindent Let us first recall the following basic version of the Poincar\'e inequality for functions in $H^1_{\Gamma_D}(\Omega)$: there exists a constant $C >0$, which depends only on $\Omega$, $\Gamma_D$ and the bounds $\gamma_-,\gamma_+$ on the conductivity $\gamma$ in \cref{cond_ell}, such that:
\begin{equation}\label{eq.Poincare}
\forall v \in H^1_{\Gamma_D}(\Omega), \quad \frac{1}{C} \lvert\lvert v \lvert\lvert_{H^1(\Omega)}^2 \:\: \leq \:\:  \int_\Omega \gamma\lvert\nabla v \lvert^2 \:\d x  \:\: \leq \:\:  \lvert\lvert v \lvert\lvert_{H^1(\Omega)}^2.
\end{equation}
Given $g \in \calC^\infty(\R^d)$, we now define $\varphi_\e$ as the $H^1(\Omega)$ solution to the following boundary-value problem:
\begin{equation}\label{def_vphie}
\left\{
\begin{array}{cccl}
-\dv(\gamma\nabla \varphi_\e) &=& 0 & \text{in } \Omega, \\[5pt]
\varphi_\e &=& 0 & \text{on } \Gamma_D, \\[5pt]
\gamma \frac{\partial \varphi_\e}{\partial n} &=& 0 & \text{on } \Gamma_N \setminus \overline{\omega_\e}, \\[5pt]
\gamma \frac{\partial \varphi_\e}{\partial n}  + k_\e \varphi_\e &=& k_\e g & \text{on } \omega_\e.
\end{array}
\right. 
\end{equation}
Equivalently, the variational form of this problem is
\begin{equation}\label{eq.varfcapfuncrobgen}
\text{Find } \varphi_\e \in H^1_{\Gamma_D}(\Omega) 
\text{ s.t. } \forall v \in H^1_{\Gamma_D}(\Omega), \quad 
\int_\Omega \gamma\nabla \varphi_\e \cdot \nabla v \:\d x 
+k_\e\int_{\omega_\e} \varphi_\e v \:\d s = k_\e \int_{\omega_\e}  gv \:\d s.
\end{equation}
Note that when $g = 1$, $\varphi_\e$ is the solution $\chi_\e$ to \cref{def_chi} when $\omega = \omega_\e$ and $k = k_\e$.
\bigskip

We next estimate the function $\varphi_\e$.
\begin{proposition}\label{prop.estvphie} 
Let $\omega_\e$ be a sequence of subsets of $\Gamma_N$ satisfying 
the separation assumption~\cref{eq.assumsepKN} and let $k_\e$ be any sequence 
of positive real numbers; let~$g\in \calC^\infty(\R^d)$, and let~$\varphi_\e$ 
be the unique solution to~\cref{def_vphie}. 
Then,
\begin{enumerate}[(i)]
\item 
The following pointwise bounds hold true:
\begin{equation}\label{eq.Linftyvphie}
 \min \left(0, \inf\limits_{x\in \overline{\Omega}} g(x) \right)  \: \leq \:  \varphi_\e(x) \: \leq \: \max\left(0,\sup\limits_{x\in \overline{\Omega}} g(x)\right) \text{ for a.e. } x\in \overline\Omega.
 \end{equation} 
\item 
The following $H^1$ estimate holds true:
\begin{equation}\label{eq.H1vphie}
\int_\Omega \gamma \lvert \nabla \varphi_\e \lvert^2 \:\d x  
\: \leq \:  
C \lvert\lvert g \lvert\lvert_{\calC^1(\R^d)}^2 \capar^N(\omega_\e,k_\e),
\end{equation}
where the constant $C$ depends on $\Omega$, $\Gamma_N$, $\Gamma_D$, the compact set $K_N$, but not on $\e$.
\item 
The following improved $L^2$ estimate holds true: 
\begin{equation}\label{eq.L2vphie}
\int_\Omega  \varphi_\e^2 \:\d x  
\: \leq \:  
C \lvert\lvert g \lvert\lvert_{\calC^1(\R^d)}^2 \capar^N(\omega_\e,k_\e)^{\frac{3}{2}}.
\end{equation}
Again, $C$ depends on $\Omega$, $\Gamma_N$, $\Gamma_D$, $K_N$, but not on $\e$.
\end{enumerate}
\end{proposition}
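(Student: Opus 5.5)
The plan is to prove the three items in order, each one feeding the next. Throughout, $\chi_\e=\chi(\omega_\e,k_\e)$ is used as a comparison function.

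\emph{Step (i): pointwise bounds.} I would use a Stampacchia truncation. Set $M=\max(0,\sup g)$. Since $M\geq 0$, the function $(\varphi_\e-M)^+$ belongs to $H^1_{\Gamma_D}(\Omega)$, so it is admissible in \cref{eq.varfcapfuncrobgen}, which gives
$$\int_\Omega\gamma|\nabla(\varphi_\e-M)^+|^2\,\d x+k_\e\int_{\omega_\e}(\varphi_\e-g)(\varphi_\e-M)^+\,\d s=0 .$$
On the set $\{\varphi_\e>M\}$ we have $\varphi_\e-g\geq\varphi_\e-M>0$, so both terms are nonnegative and hence both vanish. Then $(\varphi_\e-M)^+$ is constant, and since it vanishes on $\Gamma_D$ it is zero. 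The lower bound is symmetric, using $(\varphi_\e-m)^-$ with $m=\min(0,\inf g)$.

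The same argument also gives a comparison statement that I will need in (iii). The function $\|g\|_\infty\chi_\e\mp\varphi_\e$ solves \cref{def_vphie} with nonnegative Robin data $k_\e(\|g\|_\infty\mp g)$, so it is nonnegative. Consequently
$$|\varphi_\e|\leq\|g\|_{\infty}\,\chi_\e,\qquad 0\leq\chi_\e\leq 1 .$$

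\emph{Step (ii): energy bound.} The starting point is that $\varphi_\e$ minimizes
$$J(v)=\tfrac12\int_\Omega\gamma|\nabla v|^2\,\d x+\tfrac{k_\e}{2}\int_{\omega_\e}(v-g)^2\,\d s\quad\text{over } H^1_{\Gamma_D}(\Omega).$$
Hence $\int_\Omega\gamma|\nabla\varphi_\e|^2\,\d x\leq 2J(v)$ for any test function $v$. I would take $v=\theta g\chi_\e$, where $\theta$ is a fixed smooth cutoff equal to $1$ near $K_N$ and vanishing near $\Gamma_D$.

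The boundary term in $J(v)$ is at most $\tfrac12\|g\|_\infty^2 k_\e\int_{\omega_\e}(1-\chi_\e)^2\,\d s$. Since $0\leq 1-\chi_\e\leq 1$, this is at most $\tfrac12\|g\|_\infty^2\capar^N(\omega_\e,k_\e)$.

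The gradient term is bounded by $C\|g\|_{\calC^1}^2\big(\int_\Omega|\nabla\chi_\e|^2\,\d x+\int_\Omega\chi_\e^2\,\d x\big)$. By the Poincaré inequality \cref{eq.Poincare}, this is at most $C\|g\|_{\calC^1}^2\int_\Omega\gamma|\nabla\chi_\e|^2\,\d x$. Testing \cref{def_chivarf} with $v=\chi_\e$ gives
$$\int_\Omega\gamma|\nabla\chi_\e|^2\,\d x=k_\e\int_{\omega_\e}\chi_\e(1-\chi_\e)\,\d s\leq\capar^N(\omega_\e,k_\e).$$
This proves \cref{eq.H1vphie}.

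\emph{Step (iii): improved $L^2$ bound.} By the comparison in Step (i), it suffices to show $\int_\Omega\chi_\e^2\,\d x\leq C\capar^N(\omega_\e,k_\e)^{3/2}$. I would use a duality argument. Let $w\in H^1_{\Gamma_D}(\Omega)$ solve $-\dv(\gamma\nabla w)=\chi_\e$ in $\Omega$, with $w=0$ on $\Gamma_D$ and $\gamma\partial_n w=0$ on $\Gamma_N$. Combining the weak form of this problem with \cref{def_chivarf} tested by $v=w$ yields
$$\int_\Omega\chi_\e^2\,\d x=\int_\Omega\gamma\nabla w\cdot\nabla\chi_\e\,\d x=k_\e\int_{\omega_\e}(1-\chi_\e)\,w\,\d s\leq\|w\|_{L^\infty(U)}\,\capar^N(\omega_\e,k_\e),$$
where $U$ is a neighbourhood of $K_N$ in $\overline\Omega$ that stays away from $\overline{\Gamma_D}\cap\overline{\Gamma_N}$.

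The main obstacle is the estimate $\|w\|_{L^\infty(U)}\leq C\|\chi_\e\|_{L^2(\Omega)}$, with $C$ independent of $\e$. I would obtain it from local elliptic regularity. Near $K_N$ the boundary condition is a pure homogeneous Neumann condition on a smooth boundary, so a cutoff argument gives $\|w\|_{H^2(U)}\leq C\|\chi_\e\|_{L^2}$. The singular points where the boundary conditions change type are avoided because $K_N$ is separated from $\Gamma_D$. The Sobolev embedding $H^2\subset L^\infty$, valid for $d\leq 3$, then gives the bound.

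With this, $\|\chi_\e\|_{L^2}^2\leq C\|\chi_\e\|_{L^2}\capar^N(\omega_\e,k_\e)$, that is $\|\chi_\e\|_{L^2}^2\leq C\capar^N(\omega_\e,k_\e)^2$. To conclude I would split into two cases.
\begin{itemize}
\item If $\capar^N(\omega_\e,k_\e)\leq 1$, then $\capar^N(\omega_\e,k_\e)^2\leq\capar^N(\omega_\e,k_\e)^{3/2}$.
\item If $\capar^N(\omega_\e,k_\e)>1$, the Poincaré inequality \cref{eq.Poincare} together with \cref{eq.H1vphie} gives $\int_\Omega\varphi_\e^2\,\d x\leq C\|g\|^2_{\calC^1}\capar^N(\omega_\e,k_\e)\leq C\|g\|^2_{\calC^1}\capar^N(\omega_\e,k_\e)^{3/2}$.
\end{itemize}
In both cases \cref{eq.L2vphie} follows.
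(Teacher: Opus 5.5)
Your proof is correct. Part (i) is the paper's truncation argument, but (ii) and (iii) take a different route.

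In (ii), the paper tests the equation with $\varphi_\e$ and substitutes $k_\e=\gamma\frac{\partial\chi_\e}{\partial n}+k_\e\chi_\e$ on $\omega_\e$. It then discards sign-definite boundary terms using $\chi_\e\geq0$ and $\gamma\frac{\partial\chi_\e}{\partial n}\geq0$, and closes with Green's formula and Cauchy--Schwarz. Your Dirichlet-principle argument with competitor $g\chi_\e$ reaches the same bound more directly, with no sign bookkeeping. The cutoff $\theta$ is even superfluous, since $\chi_\e$ already vanishes on $\Gamma_D$.

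In (iii), the paper runs Aubin--Nitsche on $\varphi_\e$ itself, for general $g$. This produces the term $\int_\Omega\gamma\nabla\varphi_\e\cdot\nabla(\chi_\e p_\e\eta)\,\d x$. That term forces $\calC^1$ control of the dual solution, obtained through $H^3$ regularity in terms of $\|\varphi_\e\|_{H^1(\Omega)}\lesssim\capar^N(\omega_\e,k_\e)^{1/2}$. This is precisely where the exponent $3/2$ comes from.

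Your comparison $|\varphi_\e|\leq\|g\|_\infty\chi_\e$ reduces everything to $g\equiv1$. There the duality identity $\int_\Omega\chi_\e^2\,\d x=k_\e\int_{\omega_\e}(1-\chi_\e)w\,\d s$ carries a nonnegative weight. So you only need an $L^\infty$ bound on $w$ by $\|\chi_\e\|_{L^2(\Omega)}$, which follows from $H^2$ regularity and $d\leq3$. This requires less regularity. It also yields the sharper estimate $\|\varphi_\e\|_{L^2(\Omega)}\leq C\|g\|_\infty\capar^N(\omega_\e,k_\e)$, from which the stated one follows by your case split. Used in the proof of \cref{thm1}, it would improve the remainders $R_{1,\e}+R_{2,\e}$ from order $\capar^N(\omega_\e,k_\e)^{5/4}$ to order $\capar^N(\omega_\e,k_\e)^{3/2}$.

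The price is the extra comparison step. This costs nothing in generality, since the paper's argument already relies on the maximum principle through $0\leq\chi_\e\leq1$. What the paper's version buys is a single template that handles general $g$ in one pass and is reused almost verbatim in \cref{lemma_est_re}.
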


\begin{proof}
\noindent \textit{Proof of (i).} This result is a variant of the ``classical'' maximum principle applied to the present boundary-value problem \cref{def_vphie}. Its proof is inspired by the arguments in e.g \S 9.7 in \cite{brezis2010functional}. For notational brevity, let $M$ denote the quantity in the right-hand side of \cref{eq.Linftyvphie}, and let us set $\psi_\e(x) := \max(\varphi_\e(x)- M,0)$. Since the mapping $\R \ni t \mapsto \max(t,0)$ is Lipschitz continuous, the function $\psi_\e$ belongs to $H^1_{\Gamma_D}(\Omega)$ and its derivative reads:
$$ \nabla \psi_\e = \mathds{1}_{\varphi_\e \geq M} \nabla \varphi_\e,$$ 
see e.g. \S 3.1.2 in \cite{henrot2018shape}. Insertion of $\psi_\e$ as test function into \cref{eq.varfcapfuncrobgen} leads to:
$$ \int_\Omega \gamma \lvert\nabla \varphi_\e \lvert^2 \mathds{1}_{\left\{\varphi_\e \geq  M\right\}} \:\d x +  \int_{\omega_\e} k_\e  \varphi_\e (\varphi_\e - M) \mathds{1}_{\left\{\varphi_\e \geq  M\right\}}\:\d s =  \int_{\omega_\e} k_\e g (\varphi_\e-M) \mathds{1}_{\left\{\varphi_\e \geq  M\right\}}\:\d s.$$
By re-arranging this equality, we obtain:
$$ \int_\Omega \gamma \lvert\nabla \varphi_\e \lvert^2 \mathds{1}_{\left\{\varphi_\e \geq  M\right\}} \:\d x =  \int_{\omega_\e} k_\e (g  - \varphi_\e)(\varphi_\e-M) \mathds{1}_{\left\{\varphi_\e \geq  M\right\}}\:\d s \leq 0,$$
where the inequality is a consequence of the definition of $M$.
It follows that $\psi_\e (x) = 0$ a.e. and so $ \varphi_\e(x) \leq M$ for a.e. $x \in \overline\Omega$, as desired. The left-hand inequality in \cref{eq.Linftyvphie} is proved by similar means.\par
\bigskip

\noindent \textit{Proof of (ii).} Taking $v = \varphi_\e$ as test function in \cref{eq.varfcapfuncrobgen}, we obtain the following a priori estimate for $\varphi_\e$: 
$$ \int_\Omega \gamma \lvert \nabla \varphi_\e \lvert^2 \:\d x = k_\e \int_{\omega_\e} (g-\varphi_\e) \varphi_\e \:\d s.$$
Introducing the function $\chi_\e $ in \cref{def_chi} into the above right-hand side, it follows:
\begin{eqnarray*}
\ds\int_\Omega  \gamma|\nabla \varphi_\e |^2  \:\d x
&=&
\ds\int_{\omega_\e} k_\e(g - \varphi_\e) \varphi_\e \: \d s
\\
&=&
\ds\int_{\omega_\e} \left(\gamma \frac{\partial \chi_\e}{\partial n} + k_\e \chi_\e\right) (g - \varphi_\e)\varphi_\e \: \d s
\\
&=&
\ds\int_{\omega_\e} \gamma \frac{\partial \chi_\e}{\partial n} \, g \varphi_\e \:\d s
\;-\; \ds\int_{\omega_\e} \gamma \frac{\partial \chi_\e}{\partial n} \, \varphi_\e^2 \:\d s
\;-\; \ds\int_{\omega_\e} k_\e \chi_\e (g - \varphi_\e)^2 \:\d s
\;+\; \ds\int_{\omega_\e} k_\e \chi_\e (g - \varphi_\e) g\:\d s
\\
&\leq&
\ds\int_{\partial \Omega} \gamma \frac{\partial \chi_\e}{\partial n} \, g \varphi_\e \:\d s
+ \ds\int_{\partial \Omega} \gamma \frac{\partial \varphi_\e}{\partial n} \, g \chi_\e \:\d s,
\end{eqnarray*}
where we have used the Robin boundary condition for $\chi_\e$ in \cref{def_chi} to pass 
from the first line to the second one and the Robin boundary condition for $\varphi_\e$ in \cref{def_vphie} to get the third one. We have also used the fact that $0 \leq \chi_\e \leq 1$ 
as a consequence of the pointwise bounds \cref{eq.Linftyvphie},
and the fact that the normal derivative $\gamma \frac{\partial \chi_\e}{\partial n}$ is therefore nonnegative on $\omega_\e$.
Now using the Green's formula, the Cauchy-Schwarz inequality and the Poincar\'e's inequality \cref{eq.Poincare}, we obtain:
\begin{equation*}
 \int_\Omega \gamma \lvert \nabla \varphi_\e \lvert^2 \:\d x \:\:\leq\:\:  C \lvert\lvert g \lvert\lvert_{\calC^1(\overline{\Omega})} \lvert\lvert \nabla \varphi_\e \lvert\lvert_{L^2(\Omega)^d} \lvert\lvert \nabla \chi_\e \lvert\lvert_{L^2(\Omega)^d},
\end{equation*}
and so, after cancellation of $ \lvert\lvert \nabla \varphi_\e \lvert\lvert_{L^2(\Omega)^d} $ 
on both sides and squaring:
$$ \int_\Omega \gamma \lvert \nabla \varphi_\e \lvert^2 \:\d x \:\: \leq \:\:  C \lvert\lvert g \lvert\lvert_{\calC^1(\overline{\Omega})} ^2  \int_\Omega \gamma \lvert \nabla \chi_\e \lvert^2 \:\d x. $$
Now, a simple calculation yields:
\begin{equation*}
\begin{array}{>{\displaystyle}cc>{\displaystyle}l}
 \int_\Omega \gamma \lvert \nabla \chi_\e \lvert^2 \:\d x &=& k_\e \int_{\omega_\e} (1-\chi_\e) \chi_\e \:\d s \\[1em]
 &\leq&  k_\e \int_{\omega_\e} (1-\chi_\e)  \:\d s\\[1em]
 & =& \capar^N(\omega_\e,k_\e),
 \end{array}
\end{equation*}
as follows from another application of the pointwise bounds \cref{eq.Linftyvphie} and the definition \cref{eq.defcapar} of $\capar^{N}(\omega_\e,k_\e)$. 
This terminates the proof of \cref{eq.H1vphie}.
\par\medskip

\noindent \textit{Proof of (iii).} We rely on the so-called Aubin-Nitsche duality trick, see \cite{aubin1967behavior,nitsche1968kriterium} or \cite{ciarlet2002finite} in the context of the Finite Element Method.
Let $p_\e \in H^1_{\Gamma_D}(\Omega)$ be the unique solution to the following boundary value problem:
\begin{equation}\label{eq.pbpe}
\left\{
\begin{array}{cl}
-\dv(\gamma\nabla p_\e) = -\varphi_\e & \text{in } \Omega, \\
p_\e = 0 & \text{on } \Gamma_D,\\
\gamma \frac{\partial p_\e}{\partial n} = 0 & \text{on } \Gamma_N.
\end{array}
\right.
\end{equation}
We introduce two open sets $\calO_1 \Subset \calO_2$ such that $K_N \subset \calO_1$ and $\overline{\calO_2} \cap \overline{\Gamma_D} = \emptyset$, 
as well as a smooth cut-off function $\eta$ satisfying $\eta \equiv 1$ on $\calO_1$ and $\eta\equiv 0$ on $\R^d \setminus \calO_2$.  
We now use classical elliptic regularity theory \cite{brezis2010functional,gilbarg2015elliptic} and the Sobolev embedding theorem \cite{adams2003sobolev} to see that 
that $p_\e \in H^3(\calO_2 \cap \Omega)$ with
\begin{equation}\label{eq.regpeAN} 
\lvert\lvert p_\e \lvert\lvert_{\calC^1(\calO_2 \cap \Omega)} \: \leq \: C \lvert\lvert p_\e \lvert\lvert_{H^3(\calO_2 \cap \Omega)} \: \leq \: C \lvert\lvert \varphi_\e \lvert\lvert_{H^1(\Omega)}.
\end{equation}
It follows that: 
$$\begin{array}{>{\displaystyle}cc>{\displaystyle}l}
\int_\Omega \varphi_\e^2 \:\d x &=& -\int_\Omega \gamma \nabla p_\e \cdot \nabla \varphi_\e \:\d x  \\[1em]
&=& k_\e \int_{\omega_\e} (g - \varphi_\e) p_\e \:\d s,
\end{array}
$$
where we have used \cref{eq.pbpe} to obtain the second line.  
As in the proof of (ii), introducing the equilibrium distribution $\chi_\e$ via its Robin boundary condition on $\omega_\e$, we obtain:
$$\begin{array}{>{\displaystyle}cc>{\displaystyle}l}
\int_\Omega \varphi_\e^2 \:\d x &=& \int_{\omega_\e} \gamma \frac{\partial \chi_\e}{\partial n}(g - \varphi_\e) p_\e \:\d s + \int_{\omega_\e} k_\e \chi_\e (g - \varphi_\e) p_\e \:\d s \\[1em]
&=& \int_{\omega_\e} \gamma \frac{\partial \chi_\e}{\partial n}(g - \varphi_\e) p_\e \:\d s + \int_{\omega_\e} \gamma \frac{\partial \varphi_\e}{\partial n}\chi_\e  p_\e \:\d s \\[1em]
&=& \int_{\omega_\e} \gamma \frac{\partial \chi_\e}{\partial n}(g - \varphi_\e) p_\e \:\d s + \int_{\Omega} \gamma \nabla \varphi_\e \cdot \nabla (\chi_\e p_\e \eta)  \:\d x.
\end{array}$$
Now, the non negativity of $\frac{\partial \chi_\e}{\partial n}$ and the definition \cref{eq.defcapar} of the Robin-Neumann capacity together imply that: 

$$\begin{array}{>{\displaystyle}cc>{\displaystyle}l}
\int_\Omega \varphi_\e^2 \:\d x  &\leq& \lvert\lvert p_\varepsilon \lvert\lvert_{\calC(\omega_\e)} \int_{\omega_\e} \gamma \frac{\partial \chi_\e}{\partial n} \lvert g - \varphi_\e\lvert  \:\d s +  \int_{\Omega} \gamma \nabla \varphi_\e \cdot \nabla (\chi_\e p_\e \eta)  \:\d x  \\[1em]
&\leq& \lvert\lvert p_\varepsilon \lvert\lvert_{\calC(\omega_\e)} \lvert\lvert g \lvert\lvert_{\calC^1(\partial \Omega)}  \int_{\omega_\e} \gamma \frac{\partial \chi_\e}{\partial n}  \:\d s + \lvert\lvert p_\varepsilon \lvert\lvert_{\calC^1(\calO_2 \cap \overline\Omega)} \lvert\lvert \nabla \varphi_\e \lvert\lvert_{L^2(\Omega)^d} \lvert\lvert \nabla \chi_\e \lvert\lvert_{L^2(\Omega)^d}  \\[1em]
&\leq& C  \lvert\lvert p_\varepsilon \lvert\lvert_{\calC^1(\calO_2 \cap \overline\Omega)} \lvert\lvert g \lvert\lvert_{\calC^1(\partial \Omega)} \capar^N(\omega_\e,k_\e),
\end{array}
$$
where we have used the $H^1$ estimate \cref{eq.H1vphie}. The desired result follows from \cref{eq.regpeAN}.
\end{proof}\par\medskip


\subsection{The representation formula}

\noindent The aim of this section is to prove the representation formula \cref{asymp_uN} 
for the perturbed potential $u_\e$, solution to \cref{eq_pertN}. The argument is based on the idea of compensated compactness \cite{murat2018h}. It is quite reminiscent of that used on the proof of the representation formula \cref{asympYM} for volumic  perturbations of the conductivity considered in \cite{capdeboscq2003general}, 
see also \cite{bonnetier_dapogny_vogelius}. 
 
\begin{proof}[Proof of \cref{thm1}]
Let $r_\e := u_\e - u_0$ denote the error between the perturbed and background potentials. 
This function is the unique $H^1(\Omega)$ solution to the problem:
\begin{equation}\label{eq.pbre}
\left\{
\begin{array}{cl}
-\dv(\gamma\nabla r_\e) = 0 & \text{in } \Omega, \\[5pt]
r_\e = 0 & \text{on } \Gamma_D, \\[5pt]
\gamma \frac{\partial r_\e}{\partial n} = 0 & \text{on } \Gamma_N \setminus \overline{\omega_\e}, \\[5pt]
\gamma \frac{\partial r_\e}{\partial n} + k_\e r_\e= -k_\e u_0 & \text{on } \omega_\e.
\end{array}
\right.
\end{equation}
Let $x \in \Omega$ be a fixed point. Using the definition \cref{eq_Green} of the Green's function $G(x,y)$ to represent the value $r_\e(x)$ of the error at $x$, we obtain, after integration by parts: 
$$
\begin{array}{>{\displaystyle}cc>{\displaystyle}l}
r_\e(x) &=& - \int_\Omega \dv_y(\gamma(y) \nabla_y G(x,y) ) \: r_\e(y) \:\d y
\\[1em]
&=& 
- \int_{\partial \Omega} \gamma(y) \frac{\partial G}{\partial n_y} (x,y)r_\e(y) \:\d s(y) 
+ \int_\Omega \gamma(y) \nabla_y G(x,y) \cdot \nabla r_\e(y) \:\d y
\\[1em]
&=&  
\int_\Omega \gamma(y) \nabla_y G(x,y) \cdot \nabla r_\e(y) \:\d y,
\end{array}
$$
where we have used the fact that $\gamma(y) \frac{\partial G}{\partial n_y}(x,y)$ vanishes 
for~$y\in \Gamma_N$ and that~$r_\e$ vanishes on~$\Gamma_D$. 
Another integration by parts yields:
$$
\begin{array}{>{\displaystyle}cc>{\displaystyle}l}
r_\e(x) 
&=& - \int_\Omega \dv(\gamma \nabla r_\e) (y) \: G(x,y) \:\d y 
+ \int_{\partial \Omega} \gamma(y)\frac{\partial r_\e}{\partial n}(y) G(x,y) \:\d s(y) 
\\[1em]
&=&  
-k_\e \int_{\omega_\e} r_\e(y) G(x,y) \:\d s(y)  -k_\e \int_{\omega_\e} u_0(y) G(x,y) \:\d s(y). 
\end{array}
$$
Now introducing the Robin equilibrium potential $\chi_\e$ defined in~\cref{def_chi},
we obtain:
$$ 
r_\e(x) =  
- \int_{\omega_\e} \gamma \frac{\partial \chi_\e}{\partial n}(y) r_\e(y) G(x,y) \:\d s(y) 
- k_\e \int_{\omega_\e} \chi_\e(y) r_\e(y) G(x,y) \:\d s(y) 
-k_\e \int_{\omega_\e} u_0(y) G(x,y) \:\d s(y).
$$
Let now $\phi$ be a smooth function which coincides with $G(x,\cdot)$ 
on a fixed neighborhood of the compact set $K_N$ containing the vanishing subsets $\omega_\e$. 
Another use of Green's formula yields: 
$$ 
\begin{array}{>{\displaystyle}cc>{\displaystyle}l}
r_\e(x) 
&=&  
- \int_{\Omega} \gamma \nabla \chi_\e  \cdot \nabla (r_\e \phi) \:\d y 
- k_\e \int_{\omega_\e} \chi_\e  r_\e \phi \:\d s -k_\e \int_{\omega_\e} u_0 \phi \:\d s 
\\[1em]
&=&  
- \int_{\Omega} (\gamma \nabla \chi_\e  \cdot \nabla r_\e) \phi \:\d y 
- k_\e \int_{\omega_\e} \chi_\e  r_\e \phi \:\d s -k_\e \int_{\omega_\e} u_0 \phi \:\d s  +R_{1,\e}  
\\[1em]
&=&  
- \int_{\Omega} \gamma \nabla (\chi_\e  \phi) \cdot \nabla r_\e \:\d y 
- k_\e \int_{\omega_\e} \chi_\e  r_\e \phi \:\d s -k_\e \int_{\omega_\e} u_0 \phi \:\d s  + R_{1,\e} + R_{2,\e},
\end{array}
$$
where we have introduced the remainders
$$ 
R_{1,\e} :=  - \int_{\Omega} (\gamma \nabla \chi_\e  \cdot \nabla \phi) r_\e \:\d y, 
\text{ and } 
R_{2,\e} := \int_{\Omega} (\gamma \nabla r_\e \cdot \nabla \phi) \chi_\e  \:\d y. 
$$
Yet another use of Green's formula reveals that
$$ 
\begin{array}{>{\displaystyle}cc>{\displaystyle}l}
r_\e(x) 
&=&  
- \int_{\partial \Omega} \gamma \frac{\partial r_\e}{\partial n} \chi_\e  \phi \:\d s 
- k_\e \int_{\omega_\e} \chi_\e  r_\e \phi \:\d s -k_\e \int_{\omega_\e} u_0 \phi \:\d s  
+ R_{1,\e} + R_{2,\e} 
\\[1em]
&=&  
- \int_{\omega_\e} \gamma \frac{\partial r_\e}{\partial n} \chi_\e  \phi \:\d s 
- k_\e \int_{\omega_\e} \chi_\e  r_\e \phi \:\d s -k_\e \int_{\omega_\e} u_0 \phi \:\d s  
+ R_{1,\e} + R_{2,\e} 
\\[1em]
&=&  
k_\e \int_{\omega_\e} (\chi_\e  -1) u_0 \phi \:\d s  + R_{1,\e} + R_{2,\e},
\end{array} 
$$
Let us now write this equality under the form:
\begin{equation}\label{eq.rebeforecvGN}
r_\e(x) = \capar^N(\omega_\e,k_\e)   
\int_{\omega_\e} \frac{1}{ \capar^N(\omega_\e,k_\e) } k_\e (\chi_\e  -1) u_0 \phi \:\d s 
+ R_{1,\e} + R_{2,\e}.
\end{equation}
Now, the sequence $\frac{1}{ \capar^N(\omega_\e,k_\e) } k_\e (\chi_\e  -1)$ is bounded 
in $L^1(\partial \Omega)$, as an immediate consequence of the definition~\cref{eq.defcapar} of the Neumann-Robin capacity.
Hence, there exists a subsequence of the $\e$ (still denoted by $\e$) and 
a positive Radon measure~$\mu$ with unit mass, supported on the fixed compact subset $K_N$ (in particular, on $\Gamma_N$), such that 
$$
\forall \varphi \in \calC(\partial\Omega), \quad   
\int_{\omega_\e} \frac{1}{ \capar^N(\omega_\e,k_\e) } k_\e (\chi_\e  -1) \varphi \:\d s 
\xrightarrow{\e\to 0} 
- \int_{\partial \Omega} \varphi \:\d\mu.
$$
Note that the sign of $\mu$ stems from the fact that $\chi_\e-1 \leq 0$, as a consequence of the pointwise bounds \cref{eq.Linftyvphie}.  
Moreover, using the~$H^1$ and improved~$L^2$ estimates in~\cref{prop.estvphie}, we see that
$$ 
R_{1,\e} + R_{2,\e}  = \o\left(\capar^N(\omega_\e,k_\e)  \right).
$$
Combining this estimate with \cref{eq.rebeforecvGN}, we arrive at the desired formula \cref{asymp_uN}.
\end{proof}\par\medskip


\subsection{Comparisons between scalings: proof of \cref{thm3}} \label{s3.3}

\noindent \textit{Proof of~(i):}
Let $\omega \Subset \Gamma_N$ and $k >0$ be given; we use the shorthands $\chi = \chi(\omega,k)$ and $\chi^\infty=  \chi^\infty(\omega)$ for the Robin and Dirichlet equilibrium distributions defined by \cref{def_chi} and \cref{def_chi_inf}. Let $r$ be the difference
$r = \chi - \chi^\infty$, which satisfies the following boundary-value problem:
\[ 
\left\{ \begin{array}{ccll}
- \textrm{div}( \gamma \nabla r ) &=& 0 & \textrm{in}\; \Omega,
\\[5pt]
r &=& 0 & \textrm{on}\; \Gamma_D,
\\[5pt]
\gamma \frac{\partial r}{\partial n} &=& 0 & \textrm{on}\; \Gamma_N \setminus \overline\omega,
\\[5pt]
\gamma  \frac{\partial r}{\partial n} + k r &=& -  \gamma \frac{\partial \chi^\infty}{\partial n}  & \text{on } \omega.
\end{array} \right.
\]
The corresponding variational problem reads:
\begin{eqnarray*}
\forall  v \in H^1_{\Gamma_D}(\Omega), \quad
\ds\int_\Omega \gamma \nabla r \cdot \nabla v \:\d x \;+\; k \ds\int_{\omega} r v \:\d s
&=& - \int_{\omega} \gamma \frac{\partial \chi^\infty}{\partial n} v \:\d s.
\end{eqnarray*}
On the one hand, by taking $v = r$ as a test function in this identity and using the fact that $\chi^\infty$ 
solves \cref{def_chi_inf}, we see that:
\begin{eqnarray*}
\ds\int_{\Omega} \gamma | \nabla r|^2 \:\d x  \;+\; k \ds\int_{\omega} r^2 \:\d s
&=&
-\ds\int_\omega \gamma \frac{\partial \chi^\infty}{\partial n} r  \:\d s \\
&=& -\ds\int_{\partial \Omega} \gamma \frac{\partial \chi^\infty}{\partial n} r \:\d s
\\
&=&
- \ds\int_\Omega \gamma \nabla \chi^\infty \cdot \nabla r \:\d x
\\
&\leq&
C\, ||\nabla r||_{L^2(\Omega)^d} \, ||\nabla \chi^\infty||_{L^2(\Omega)^d},
\end{eqnarray*}
so that in particular:
\begin{equation} \label{est_r}
||\nabla r||_{L^2(\Omega)^d}  \:\: \leq \:\:  C\, ||\nabla \chi^\infty||_{L^2(\Omega)^d}.
\end{equation}
On the other hand, elementary manipulations of the definitions \cref{def_chi,def_chi_inf} of $\chi^\infty$ 
and $\chi$ yield: 
\begin{eqnarray*}
\capar^N(\omega,k) &=& \ds\int_{\omega} \gamma \frac{ \partial \chi}{\partial n} \:\d s \\
&=& \ds\int_{\omega} \gamma  \frac{ \partial \chi}{\partial n} \, \chi^{\infty} \:\d s
\\
&=&
\ds\int_{\partial \Omega} \gamma  \frac{ \partial \chi}{\partial n} \, \chi^{\infty} \:\d s.
\end{eqnarray*}
Now, by integration by parts, we see that:
\begin{eqnarray*}
\capar^N(\omega,k)  &=& \ds\int_\Omega  \gamma \nabla \chi \cdot \nabla \chi^\infty \:\d x.
\\
&=& 
\ds\int_\Omega \gamma \nabla r \cdot \nabla \chi^\infty \:\d x
\;+\; 
\ds\int_\Omega \gamma \nabla \chi^\infty \cdot \nabla \chi^\infty \:\d x
\\
&\leq& 
C \,\big( ||\nabla r||_{L^2(\Omega)^d} \;+\; ||\nabla \chi^\infty||_{L^2(\Omega)^d} \big)
||\nabla \chi^\infty||_{L^2(\Omega)^d} \\[0.5em]
&\leq& 
C \, ||\nabla \chi^\infty||_{L^2(\Omega)^d}^2,
\end{eqnarray*}
where the constant $C$ depends only on $\Omega$ and the bounds $\gamma^-$, $\gamma^+$ in \cref{cond_ell}.
The estimate \cref{est_cRN_cap} then follows from \cref{prop_1}. \par\medskip

\noindent\textit{Proof of~(ii):}
We again use the shorthand $\chi = \chi(\omega,k)$ for the solution to \cref{def_chi}.
From \cref{eq.Linftyvphie}, we have $0 \leq \chi(x) \leq 1$ a.e. in $\Omega$, so that:
\begin{equation*}
\capar^N(\omega,k) 
\: = \:
k \ds\int_{\omega} (1 - \chi) \:\d s \;\leq\; k \,|\omega| .
\end{equation*}
In order to obtain a lower bound for $\capar^N(\omega,k)$, we use the Cauchy-Schwarz inequality and the Poincar\'e's inequality \cref{eq.Poincare} to see that:
\begin{eqnarray*}
\ds\int_\omega \chi  \:\d s &\leq&
|\omega|^{1/2} \Big(\ds\int_\omega \chi^2 \:\d s \Big)^{1/2}
\;\leq\; C |\omega|^{1/2} ||\nabla \chi||_{L^2(\Omega)^d},
\end{eqnarray*}
where the constant $C$ is independent of $\omega$ and $k$. In addition, using $\chi$ as test function in \cref{def_chivarf} shows that
$$
\ds\int_\Omega \gamma |\nabla \chi|^2\:\d x
= k \ds\int_\omega \chi(1- \chi) \:\d s \;\leq\; \capar^N(\omega,k).
$$
Combining the previous two inequalities, we arrive at
$$
\ds\int_\omega \chi  \:\d s
\:\: \leq \:\: C |\omega|^{1/2} \big(\capar^N(\omega,k) \big)^{1/2}.
$$
We now deduce the following estimate
\begin{equation*}
\begin{array}{>{\displaystyle}cc>{\displaystyle}l}
\capar^N(\omega,k) &=& k \ds\int_\omega (1-\chi) \:\d s \\
&=& k \lvert \omega \lvert \: -\: k \int_\omega \chi \:\d s \\
&\geq& k|\omega| - C k |\omega|^{1/2} \big(\capar^N(\omega,k) \big)^{1/2}. 
\end{array}
\end{equation*}
We have thus proved that the following quadratic inequality holds true: 
\begin{multline*}
 X^2 +bX -c  \:\: \geq \:\: 0, \text{ with unknown } X = \big(\capar^N(\omega,k) \big)^{1/2} \geq 0, \\
 \text{ and coefficients } b = C k |\omega|^{1/2} > 0, \:\: c = k|\omega| > 0. 
 \end{multline*}
The latter can only hold if $X \geq \frac{-b + \sqrt{b^2 + 4c}}{2}$, which yields, after elementary calculations:
\begin{eqnarray*}
\capar^N(\omega,k) &\geq& 
4 k \, |\omega| 
\ds\frac{1}
{\left(\sqrt{4 + C^2k} + \sqrt{C^2 k} \right)^2},
\\
&\geq& C \, k |\omega|,
\end{eqnarray*}
where the constant $C$ depends on the assumed maximum bound $M$ for $k$.
This completes the proof.
\medskip

\noindent \textit{Proof of~(iii):}
Let $\omega$ be a fixed, strict subset of $\Gamma_N$ and let $k_\e$ be a sequence of positive numbers such that $k_\e \to \infty$.
We use the shorthands $\chi_\e = \chi(\omega,k_\e)$ and $\chi^\infty = \chi^\infty(\omega)$ for the respective solutions to~\cref{def_chi} and~\cref{def_chi_inf}.
At first, using $\chi_\e$ as test function in \cref{def_chi}, 
we deduce from \cref{est_cRN_cap} that
\begin{equation}\label{est_chi_cap}
\begin{array}{>{\displaystyle}cc>{\displaystyle}l}
\ds\int_\Omega \gamma |\nabla \chi_\e|^2 \:\d x &= &  k_\e \ds\int_{\omega} (1 - \chi_\e)\chi_\e \:\d s \\[1em]
& \leq & k_\e\ds\int_{\omega} (1- \chi_\e) \:\d s 
\\[1em]
&=& \capar^N(\omega,k_\e)  \\[1em]
&\leq&  C\capa(\omega),
\end{array}
\end{equation}
where the second line follows from \cref{eq.Linftyvphie} and the final line is a consequence of \cref{est_cRN_cap}.
The sequence $\chi_\e$ is therefore uniformly bounded in $H^1(\Omega)$. Hence, up to a subsequence, still denoted by $\e$, 
there exists a function $\chi^* \in H^1(\Omega)$
such that $\chi_\e \to \chi^*$ weakly in $H^1(\Omega)$.
Passing to the weak limit in the variational problem \cref{def_chivarf} for $\chi_\e$, it is easy to check that $\chi^*$ solves
\begin{equation}\label{def_chigen}
\left\{
\begin{array}{cccl}
-\dv(\gamma\nabla \chi^*) &=& 0 & \text{in } \Omega, \\[5pt]
\chi^* &=& 0 & \text{on } \Gamma_D, \\[5pt]
\gamma \frac{\partial \chi^*}{\partial n} &=& 0 & \text{on } \Gamma_N \setminus \overline{\omega},  \\[5pt]
\chi^* &=& 1 & \text{on } \omega,
\end{array}
\right. 
\end{equation}
that is, $\chi^* = \chi^\infty$. In addition, the uniqueness of this limit implies that
the whole sequence $\chi_\e$ weakly converges to $\chi^\infty$ in $H^1(\Omega)$.
Next, we compute:
\begin{equation*}
\begin{array}{>{\displaystyle}cc>{\displaystyle}l}
\capar^N(\omega,k_\e) &=& \int_{\partial \Omega} \gamma \frac{\partial \chi_\e}{\partial n}\:\d s  \\[1em]
&=&  \ds\int_{\partial \Omega}  \gamma \frac{\partial \chi_\e}{\partial n} \, \chi_\infty \:\d s \\[1em]
& = & \ds\int_\Omega \gamma \nabla \chi_\e \cdot \nabla \chi^\infty \:\d x.
\end{array}
\end{equation*}
Hence, 
$$\capar^N(\omega,k_\e)  \xrightarrow{\e \to 0} \int_\Omega \gamma |\nabla \chi^\infty|^2 \:\d x,$$
a quantity which is controlled by $\capa(\omega)$ up to constants which are independent of $\e$, see \cref{prop_1}. 
This terminates the proof of \cref{lim_cRN_cap}.

\subsection{Monotonicity of the Robin-Neumann capacity: proof of \cref{prop.monotonecaparN}}\label{sec.moncaprN}

\noindent \textit{Proof of~(i):} Let $\omega$ be a fixed subset of the Neumann region $\Gamma_N$, et let $0 < k_1 \leq k_2$. We denote by $\chi_1 := \chi(\omega,k_1)$ and $\chi_2 := \chi(\omega,k_2)$ the equilibrium distributions associated to these two situations, see \cref{def_chi}. We recall from the pointwise bounds \cref{eq.Linftyvphie} in \cref{prop.estvphie} that these functions satisfy 
$$0 \leq \chi_1 \leq 1 \text{ and } 0 \leq \chi_2 \leq 1 \text{ on } \overline\Omega.$$  
A simple calculation shows that the difference $r := \chi_2 - \chi_1$ between them is the unique $H^1(\Omega)$ solution to the following boundary value problem:
\begin{equation}\label{eq.monrNk} 
\left\{ \begin{array}{ccll}
- \dv(\gamma \nabla r)  &=& 0 & \textrm{in}\; \Omega,
\\[5pt]
r &=& 0 & \textrm{on}\; \Gamma_D,
\\[5pt]
\gamma\frac{\partial r}{\partial n} &=& 0 & \textrm{on}\; \Gamma_N \setminus \overline\omega,
\\[5pt]
\gamma\frac{\partial r}{\partial n} + k_2 r &=& (k_2-k_1) (1-\chi_1) & \textrm{on}\; \omega.
\end{array} \right.
\end{equation}

Since $k_1 \leq k_2$ and $(1-\chi_1)$ is non negative on $\overline\Omega$, the term $(k_2-k_1) (1-\chi_1)$ at the right-hand side of the Robin boundary condition in \cref{eq.monrNk} is non negative. Another application of the pointwise bounds \cref{eq.Linftyvphie} shows that:
\begin{equation}\label{eq.rnegmonrNk}
r = \chi_2 - \chi_1 \:\:\geq\:\: 0 \text{ on }\overline\Omega.
\end{equation}

Now recalling the definition \cref{eq.defcapar} of the Robin-Neumann capacity, we see that:
$$ \capar^N(\omega,k_2) - \capar^N(\omega,k_1 ) = \int_\omega \gamma \frac{\partial r}{\partial n} \:\d s.$$
Hence, bringing into play the Dirichlet equilibrium distribution $\chi^\infty \equiv \chi^\infty(\omega)$ defined by \cref{def_chi_inf}, we obtain:
$$
\begin{array}{>{\displaystyle}cc>{\displaystyle}l}
\capar^N(\omega,k_2) - \capar^N(\omega,k_1 ) &=& \int_\omega \gamma \frac{\partial r}{\partial n} \chi^\infty \:\d s \\[1em]
 &=& \int_{\partial \Omega} \gamma \frac{\partial r}{\partial n} \chi^\infty \:\d s \\[1em]
 &=& \int_{\partial \Omega} \gamma \frac{\partial \chi^\infty}{\partial n} r \:\d s, \\[1em]
 &=& \int_{\omega} \gamma \frac{\partial \chi^\infty}{\partial n} r \:\d s, \\[1em]
\end{array}
$$
where the third line follows after two successive applications of the Green's formula. 

Now, since $\chi^\infty \equiv 1$ on $\omega$ and $0 \leq \chi^\infty \leq 1$ on $\overline\Omega$, the normal derivative $\gamma \frac{\partial \chi^\infty}{\partial n}$ is non negative on $\omega$. 
Recalling \cref{eq.rnegmonrNk}, this terminates the proof of $(i)$.\par\medskip

\noindent \noindent \textit{Proof of~(ii):} Let $k$ be a positive real number and let $\omega_1 \subset \omega_2$ be two subsets of the Neumann region $\Gamma_N$. We now set $\chi_1 := \chi(\omega_1,k)$ and $\chi_2 := \chi(\omega_2,k)$ the equilibrium distributions, see again the definition \cref{def_chi}.

In this situation again, an elementary  calculation shows that the difference $r := \chi_2 - \chi_1$ between both functions is the unique $H^1(\Omega)$ solution to the following boundary value problem:
\begin{equation}\label{eq.monrNom} 
\left\{ \begin{array}{ccll}
- \dv(\gamma \nabla r)  &=& 0 & \textrm{in}\; \Omega,
\\[5pt]
r &=& 0 & \textrm{on}\; \Gamma_D,
\\[5pt]
\gamma\frac{\partial r}{\partial n} &=& 0 & \textrm{on}\; \Gamma_N \setminus \overline{\omega_2},
\\[5pt]
\gamma\frac{\partial r}{\partial n} + kr &=& k(1- \chi_1) & \textrm{on}\; \omega_2 \setminus \overline{\omega_1}, \\[5pt]
\gamma\frac{\partial r}{\partial n} + k r &=& 0 & \textrm{on}\; \omega_1.
\end{array} \right.
\end{equation}

Since the right-hand side of the Robin boundary condition satisfied by $r$ on $\omega_2$ is non negative, a simple adaptation of the proof of the pointwise bounds \cref{eq.Linftyvphie} reveals that:
\begin{equation}\label{eq.rnegmonrNom}
r = \chi_2 - \chi_1 \:\:\geq \:\: 0 \text{ on }\overline\Omega.
\end{equation}

Let us now use the fact that $\gamma \frac{\partial \chi_1}{\partial n} = 0$ on $\omega_2 \setminus \overline{\omega_1}$ to express the difference between the capacities $\capar^N(\omega_2,k)$ and $\capar^N(\omega_1,k) $ as:
$$ \capar^N(\omega_2,k) - \capar^N(\omega_1,k) =  \int_{\omega_2} \gamma \frac{\partial r}{\partial n}  \:\d s .$$
Like in the proof of point $(i)$, we now introduce the Dirichlet equilibrium distribution $\chi_2^\infty := \chi^\infty(\omega_2)$ defined by \cref{def_chi_inf} in this formula and apply Green's formula twice to obtain:
$$ 
\begin{array}{>{\displaystyle}cc>{\displaystyle}l}
\capar^N(\omega_2,k) - \capar^N(\omega_1,k)  &=&  \int_{\omega_2} \gamma \frac{\partial r}{\partial n} \chi^\infty_2 \:\d s \\[1em]
&=&  \int_{\omega_2} \gamma \frac{\partial \chi^\infty_2}{\partial n} r \:\d s \\[1em]
\end{array}
$$
Again, the claimed inequality in $(ii)$ follows from the combination of \cref{eq.rnegmonrNk} with the non negativity of the normal derivative $\gamma \frac{\partial \chi^\infty_2}{\partial n}$ on $\omega$ .\par\medskip


\section{Consistency of the expansions when $\Gamma_N$ is perturbed by a Robin and a Dirichlet boundary condition: Proof of \cref{thm5}} \label{sec_4}

\noindent In the previous sections, we have proved that, for a fixed subset $\omega_\e$ of the Neumann region $\Gamma_N$, the capacity $\capar^N(\omega,k_\e)$ behaves as $\capa(\omega)$ when the admittance parameter $k_\e$ tends to $\infty$. However, this behavior depends on  the considered set $\omega$. In this section, we show \cref{thm5}, which claims that, roughly speaking, it persists in the case of a sequence $\omega_\e$ of subsets that are vanishingly small, see \cref{thm5}. This indicates in particular that the asymptotic expansion \cref{asymp_pertN} associated to a Robin perturbation of the Neumann boundary is ``consistent'' with that \cref{asymp_uN} associated to a Dirichlet perturbation of this boundary, when the parameter $k_\e$ tends to $\infty$ ``sufficiently fast'' in the limit $\e \to 0$. 
This result rests on additional assumptions on the sets $\omega_\e$;
namely we assume the following:
\begin{equation}\label{eq.hypth5}
\begin{array}{cl}
\bullet \:\: \text{If } d =2, & \omega_\e \text{ is an arc of length } 2\e \text{ on } \partial \Omega, \\[1em]
\bullet \:\: \text{If } d =3, & \omega_\e \text{ is the image } \Phi(\D_\e)
\text{ of the planar disk } \D_\e = \{ x = (x_1, x_2, 0) \in \R^3, \quad x_1^2 + x_2^2 < \e^2\} \\[0.5em]
& \hspace{3.4cm} \text{ by a given smooth diffeomorphism }\Phi~: \R^3 \rightarrow \R^3. 
\end{array}
\end{equation}
To simplify the presentation, we limit ourselves to proving this result in the 2d case, in the particular configuration where $\omega_\e$ is the segment $(-\e/2,\e/2) \times \left\{0 \right\}$, pertaining to a completely flat part of the security compact $K_N \subset \Gamma_N$, while $\Omega$ coincides with the lower half-plane in the neighborhood of $0$. Moreover, we assume that the conductivity $\gamma$ identically equals $1$.

Throughout the proof, we use the shorthands $\chi_\e$ and $\chi_\e^\infty$ for the equilibrium distributions $\chi(\omega_\e,k_\e)$ and $\chi^\infty(\omega_\e)$ defined by \cref{def_chi_inf,def_chi}, respectively. 
We proceed in several steps. 
\par\medskip 

\noindent \textit{Step 1: We relate the capacities $\capar^N(\omega_\e,k_\e) $ and $\capa(\omega_\e)$, up to a remainder.}

To this end, let us recall from \cref{eq.defcapar} that
$$
\capar^N(\omega_\e,k_\e) \:\: = \:\: k_\e \ds\int_{\omega_\e} (1 - \chi_\e) \:\d s \:\: = \:\: \int_{\omega_\e} \frac{\partial \chi_\e}{\partial n}\:\d s,
$$
while the capacity $\capa(\omega_\e)$ is equivalent to the quantity
\begin{equation} \label{est_2}
 \ds\int_\Omega | \nabla \chi_\e^\infty|^2 \:\d x
\;=\; \ds\int_{\omega_\e} \frac{ \partial \chi_\e^\infty}{\partial n} \:\d s,
\end{equation}
up to a multiplicative constant that does not depend on $\e$, see \cref{prop_1}. 

We already know from~\cref{est_cRN_cap} that $\capar^N(\omega_\e,k_\e) \leq C \, \capa(\omega_\e)$, and we look for a reciprocal estimate when $k_\e$ is large enough. To achieve this, let us write:
\begin{equation*}\label{est_5a}
\begin{array}{>{\displaystyle}cc>{\displaystyle}l}
\capar^N(\omega_\e,k_\e)
& =&  \ds\int_{\omega_\e}  \frac{\partial \chi_\e}{\partial n}\chi_\e^\infty  \:\d s\\[1em] 
&=& 
\ds\int_\Omega \nabla \chi_\e \cdot \nabla \chi_\e^\infty \:\d x \\[1em] 
&=&
\ds\int_{\partial \Omega} \frac{\partial \chi_\e^\infty}{\partial n} \chi_\e \:\d s\\[1em] 
&=& 
\ds\int_{\omega_\e} \frac{\partial \chi_\e^\infty}{\partial n} \chi_\e \:\d s. 
\end{array}
\end{equation*} 
Let us now bring into play the quantity $\capa(\omega_\e)$ into this equality: 
\begin{equation}\label{est_5a}
\begin{array}{>{\displaystyle}cc>{\displaystyle}l}
\capar^N(\omega_\e,k_\e) &=&
\ds\int_{\omega_\e} \frac{\partial \chi_\e^\infty}{\partial n} \:\d s -  \ds\int_{\omega_\e} \frac{\partial \chi_\e^\infty}{\partial n} (1 - \chi_\e) \:\d s \\[1em] 
&=& 
\ds\int_\Omega |\nabla \chi_\e^\infty|^2 \:\d x  - \ds\int_{\omega_\e} \frac{\partial \chi_\e^\infty}{\partial n} (1 - \chi_\e) \: \d s \\[1em] 
&\geq&
C \capa(\omega_\e) + \ds\int_{\omega_\e} \frac{\partial \chi_\e^\infty}{\partial n} r_\e \:\d s,
\end{array}
\end{equation}
where $r_\e = \chi_\e - \chi_\e^\infty$ is the unique $H^1(\Omega)$ solution to the following boundary-value problem:
\begin{equation} \label{eq.chimchiinf}
\left\{
\begin{array}{cccl}
-\Delta r_\e &=& 0 & \text{in } \Omega, 
\\[5pt]
r_\e &=& 0 & \text{on } \Gamma_D,
\\[5pt]
\frac{\partial r_\e}{\partial n} &=& 0 & \text{on } \Gamma_N \setminus \overline{\omega_\e},
\\[5pt]
\frac{\partial r_\e}{\partial n} + k_\e r_\e &=& - \frac{\partial \chi_\e^\infty}{\partial n} & \text{on } \omega_\e.
\end{array}
\right.
\end{equation}
\par\medskip
To complete the proof, we thus need to estimate the term $R_\e :=\int_{\omega_\e} \frac{\partial \chi_\e^\infty}{\partial n} r_\e \:\d s$. We rewrite the latter by rescaling about  the vicinity of the origin $0 \in \omega_\e$:
\begin{equation}\label{eq.defRe}
R_\e = \int_{\omega_1} \frac{\partial \overline{\chi_\e^\infty}}{\partial n} \overline{r_\e} \:\d s,
\end{equation}
where $\omega_1$ is the segment $(-\frac12, \frac12) \times \left\{ 0 \right\}$ and we have set 
$$\overline{\chi_\e^\infty}(z) = \chi_\e^\infty (\e z) \text{ and }\overline{r_\e}(z) = r_\e(\e z), \text{ for } z \in \frac{1}{\e} \Omega.$$ 

\noindent \textit{Step 2: We estimate the rescaled function $\overline{\chi_\e^\infty}$.}

Throughout the rest of the proof, we denote by $B_r$ (resp. $B_r^-$) the ball (resp. the lower half-ball) with center $0$ and radius $r$ in $\R^d$.
Let us observe that a change of variables yields: 
$$ 
\begin{array}{>{\displaystyle}cc>{\displaystyle}l}
\lvert\lvert \nabla \overline{\chi_\e^\infty} \lvert\lvert_{L^2(B_2^-)^d}^2 &=&  \e^2 \int_{B_2^-} \lvert (\nabla \chi_\e^\infty) (\e z) \lvert^2 \:\d z \\[1em]
&=&  \int_{B_{2\e}^-} \lvert \nabla \chi_\e^\infty (z) \lvert^2 \:\d z \\[1em]
&\leq& \lvert\lvert \nabla \chi_\e^\infty \lvert\lvert_{L^2(\Omega)^d}^2  \\[1em]
&\leq & C \capa(\omega_\e).
\end{array}
$$
We now set $m_\e := \frac{1}{\lvert B_2^-\lvert}\int_{B_2^-} \chi_\e^\infty\:\d x$, whose value is less than $1$, owing to point (i) in \cref{prop.estvphie}.
By virtue of the Poincar\'e-Wirtinger inequality in $B_2^-$, it holds: 
$$ \begin{array}{>{\displaystyle}cc>{\displaystyle}l}
\lvert\lvert \overline{\chi_\e^\infty} - m_\e \lvert\lvert_{L^2(B_2^-)}& \leq & C \lvert\lvert \nabla \overline{\chi_\e^\infty} \lvert\lvert_{L^\infty(B_2^-)^d} \\[1em]
& \leq&  C \capa(\omega_\e)^{1/2}.
\end{array}$$
Moreover, the function $\overline{\chi_\e^\infty} -m_\e$ satisfies the following mixed boundary value problem in $B_2^-$: 
$$
\left\{
\begin{array}{cccl}
-\Delta \big( \overline{\chi_\e^\infty} - m_\e \big) &=& 0 & \text{in } B_2^-, 
\\[5pt]
\overline{\chi_\e^\infty} - m_\e   &=& 1-m_\e & \text{on } \omega_1,
\\[5pt]
\frac{\partial \big( \overline{\chi_\e^\infty} - m_\e \big) }{\partial n} &=& 0 & \text{on } \omega_2 \setminus \overline{\omega_1},
\\[5pt]
\big( \overline{\chi_\e^\infty} - m_\e \big) &=& g_\e& \text{on } \partial B_2 \cap B_2^-,
\end{array}
\right.
$$
where $g_\e$ lies in $H^{1/2}(\partial B_2 \cap B_2^-)$.
It follows from the theory of elliptic regularity that 
$\overline{\chi_\e^\infty} -m_\e \in H^s(B_{3/2}^-)$ for any $1 \leq s < 3/2$ and that
\begin{equation} \label{est_10}
\left\lvert\left\lvert \overline{\chi_\e^\infty } -m_\e \right\lvert\right\lvert_{H^s(B_{3/2}^-)} 
\:\: \leq\:\:  C\, \left\lvert\left\lvert\overline{\chi_\e^\infty} - m_\e\right\lvert\right\lvert_{H^1(B_{2}^-)} 
\;\leq\; C \capa(\omega_\e)^{1/2},
\end{equation}
with $C$ independent of~$\e$, see \cite{gilbarg2015elliptic,grisvard2011elliptic}. 
Invoking the trace Theorem, see for instance Theorem~1.5.2.1 in~\cite{grisvard2011elliptic},
we obtain that $\frac{\partial \chi_\e^\infty}{\partial n} \in H^{-s}(\partial B_{3/2}^-)$
for any~$0 < s \leq 1/2$, with the estimate:
\begin{eqnarray*} 
\left\lvert\left\lvert \frac{\partial \overline{\chi_\e^\infty }}{\partial n}
\right\lvert\right\lvert_{H^{-s} (\partial B_{3/2}^-)} 
&\leq& C \, \left\lvert\left\lvert \overline{\chi_\e^\infty } - m_\e 
\right\lvert\right\lvert_{H^1(B_2^-)}
\;\leq\; C \capa(\omega_\e)^{\frac{1}{2}}.
\end{eqnarray*}
As $\frac{\partial \overline{\chi_\e^\infty}}{\partial n}$
vanishes outside $\omega_1$, we conclude that 
$\frac{\partial \overline{\chi_\e^\infty}}{\partial n} \in \widetilde{H}^{-s}(\omega_1)$ with 
\begin{equation} \label{est_10b}
\left\lvert\left\lvert \frac{\partial \overline{\chi_\e^\infty }}{\partial n}\right\lvert\right\lvert_{\widetilde{H}^{-s} (\omega_1)} 
\:\: \leq \:\: C 
\left\lvert\left\lvert \frac{\partial \overline{\chi_\e^\infty }}{\partial n}
\right\lvert\right\lvert_{H^{-s} (\partial B_{3/2}^-)} 
\;\leq\; C \capa(\omega_\e)^{\frac{1}{2}},
\end{equation}
where $C$ is independent of $\e$.

Note that it is possible to directly control the $\widetilde{H}^{-s}(\omega_\e)$ norm of the normal derivative $\frac{\partial \chi_\e}{\partial n}$ of the unscaled function $\chi_\e$ from $\omega_\e$, up to a finer estimate. 
This result is not needed in the present proof, but since it is interesting in its own right, a statement and proof are included in \cref{sec.app}. 
\par\medskip

\noindent \textit{Step 3: We estimate the rescaled function $\overline{r_\e}$.}

We first note that 
\begin{equation} \label{est_grad_re}
\begin{array}{>{\displaystyle}cc>{\displaystyle}l}
||\nabla r_\e||_{L^2(\Omega)^d}   &\leq&||\nabla \chi_\e||_{L^2(\Omega)^d} + ||\nabla \chi_\e^\infty||_{L^2(\Omega)^d}\\[1em]
&\leq&\left( k_\e \ds\int_{\omega_\e} (1- \chi_\e)\chi_\e \:\d s \right)^{1/2} + C\capa(\omega_\e)^{1/2}\\[1em]
&\leq&C \left(\capar^N(\omega_\e,k_\e)^{1/2} + \capa(\omega_\e)^{1/2}\right) \\[1em]
& \leq & C\, \capa(\omega_\e)^{1/2},
\end{array}
\end{equation}
where we have used the pointwise bounds \cref{eq.Linftyvphie} to pass from the second line to the third one and \cref{est_cRN_cap} to arrive at the final line. 
Rescaling this inequality, we obtain 
\begin{eqnarray*}
\begin{array}{>{\displaystyle}cc>{\displaystyle}l}
||\nabla \overline{r_\e}||_{L^2(B_2^-)^d} &=& ||\nabla r_\e||_{L^2(B_{2\e}^-)^d} \\
 & \leq &  C\, \capa(\omega_\e)^{1/2}.
 \end{array}
\end{eqnarray*}
In addition, we have
\begin{eqnarray*}
||\overline{r_\e}||_{H^{1/2}(\omega_1)}^2 &\leq& ||\overline{r_\e}||_{H^{1}(B^-_1)}^2 \\
&=&  ||\overline{r_\e}||_{L^2(B^-_1)}^2 + \lvert\lvert \nabla \overline{r_\e} \lvert\lvert_{L^2(B^-_1)^2}^2 \\
&=&  \e^{-2} ||r_\e||_{L^2(B^-_\e)}^2 + \lvert\lvert \nabla r_\e \lvert\lvert_{L^2(B^-_\e)^2}^2 \\
&\leq& \e^{-2} ||r_\e||_{H^1( \Omega)}^2.
\end{eqnarray*}
Recalling that $|\omega_\e|=2\e$,
it follows from \cref{est_grad_re} and the Poincar\'e's inequality \cref{eq.Poincare} that
\begin{equation} \label{est_6}
\lvert\lvert \overline{r_\e} \lvert\lvert_{H^{1/2}(\omega_1)} \:\: \leq \:\: \ds\frac{C}{|\omega_\e|} \, \capa(\omega_\e)^{1/2}.
\end{equation}
\medskip

On the other hand, multiplying \cref{eq.chimchiinf} by $r_\e$ and integrating, we obtain
\begin{equation*}
\ds\int_\Omega |\nabla r_\e|^2  \:\d x + k_\e \int_{\omega_\e} r_\e^2 \:\d s = -\ds\int_{\partial \Omega} \frac{\partial \chi_\e^\infty}{\partial n} r_\e \:\d s,
\end{equation*}
and so:
\begin{equation}\label{est_7}
\begin{array}{>{\displaystyle}cc>{\displaystyle}l}
||r_\e||_{L^2(\omega_\e)} &\leq&  k_\e^{-1/2}  \left\lvert \left\lvert \frac{\partial \chi_\e^\infty}{\partial n} 
\right\lvert\right\lvert_{H^{-1/2}(\partial \Omega)} ^{1/2} 
\, ||r_\e||_{H^{1/2}(\partial \Omega)} ^{1/2} 
\\[1em]
&\leq& C \, k_\e^{-1/2} \,  ||\chi_\e^\infty||_{H^{1}(\Omega)} ^{1/2} 
\; ||\nabla r_\e||_{L^2(\Omega)}^{1/2} 
\\[1em] 
&\leq& C \, k_\e^{-1/2} \capa(\omega_\e)^{1/2},
\end{array}
\end{equation}
where we have used \cref{est_grad_re}
to obtain the final inequality.
A simple change of variables now yields: 
$$\lvert\lvert \overline{r_\e} \lvert\lvert_{L^2(\omega_1)}  \:\: \leq  \:\: \ds\frac{C}{\lvert\omega_\e\lvert^{1/2} k_\e^{1/2}} \capa(\omega_\e)^{1/2}. $$

By interpolation, we deduce from \cref{est_6} and \cref{est_7}
that $\overline{r_\e} \in H^{1/4}(\omega_1)$ with:
\begin{equation} \label{est_8}
||\overline{r_\e}||_{H^{1/4}(\omega_1)} 
\:\: \leq \:\: 
\ds\frac{C}{\lvert\omega_\e\lvert^{3/4}k_\e^{1/4}} \, \capa(\omega_\e)^{1/2}.
\end{equation}
\medskip

\noindent \textit{Step 4: Conclusion.}

It follows from the definition \cref{eq.defRe} of $R_\e$ and from the estimates~\eqref{est_10b}and~\eqref{est_8} that: 
 $$ 
 \begin{array}{>{\displaystyle}cc>{\displaystyle}l}
 R_\e &\leq&  \left\lvert\left\lvert \frac{\partial \overline{\chi_\e^\infty }}{\partial n}\right\lvert\right\lvert_{\widetilde{H}^{-1/4} (\omega_1)}  \: ||\overline{r_\e}||_{H^{1/4}(\omega_1)}   \\[1em]
 &\leq& \frac{C}{\lvert\omega_\e\lvert^{3/4}k_\e^{1/4}} \, \capa(\omega_\e).
\end{array}
 $$
Hence, if $k_\e$ tends to infinity so fast that $k_\e \lvert \omega_\e\lvert^3 \to \infty$, 
the remainder $R_\e$ tends to $0$.
This terminates the proof.
%
%
%
%
%
\section{Proofs of the results about Robin perturbation of the Dirichlet boundary $\Gamma_D$}  \label{sec_5}


\noindent In this section, we consider the situation where the perturbation of the background potential $u_0$ in \cref{eq.conducbg2} is induced by a replacement of the Dirichlet boundary condition by a Robin boundary condition on a subset $\omega_\e$ of the region $\Gamma_D \subset \partial \Omega$: the perturbed potential $u_\e^{D,k_\e} \in H^1(\Omega)$, denoted by $u_\e$ throughout this section, solves the boundary-valued problem \cref{eq_pertD},
so that the difference $r_\e = u_\e - u_0$ is the $H^1(\Omega)$ solution to the following problem:
\begin{equation} \label{eq_reD}
\left\{ \begin{array}{ccll}
- \textrm{div}( \gamma \nabla r_\e ) &=& 0 & \textrm{in}\; \Omega,
\\ 
r_\e &=& 0 & \textrm{on}\; \Gamma_D \setminus \omega_\e,
\\ 
\gamma \frac{\partial r_\e}{\partial n} &=& 0 & \textrm{on}\; \Gamma_N,
\\ 
\gamma \frac{\partial r_\e}{\partial n} + k_\e r_\e &=& - \gamma \frac{\partial u_0}{\partial n}  
& \textrm{on}\; \omega_\e.
\end{array} \right.
\end{equation}\par
\medskip
In this setting, we aim to derive an asymptotic expansion of $r_\e$ as $\e\to 0$. 


\subsection{Preliminary estimates}

\noindent Let $\omega_\e$ be a sequence of subsets of the Dirichlet region $\Gamma_D$ that are strictly contained in the latter, in the sense that they all belong to a fixed compact $K_D \subset \Gamma_D$, i.e. \cref{eq.assumsepKD} holds true; let also $k_\e > 0$ be a sequence of positive numbers. 
In order to estimate the difference $r_\e$ in \cref{eq_reD}, we first consider, more generally, the solution $v_\e$ to the following boundary-value problem:
\begin{equation} \label{def_v}
\left\{ \begin{array}{ccll}
- \textrm{div}( \gamma \nabla v_\e ) &=& 0 & \textrm{in}\; \Omega,
\\ 
v_\e &=& 0 & \textrm{on}\; \Gamma_D \setminus \omega_\e,
\\ 
\gamma \frac{\partial v_\e}{\partial n}  &=& 0 & \textrm{on}\; \Gamma_N,
\\ 
\gamma \frac{\partial v_\e}{\partial n}  + k_\e v_\e &=& g & \textrm{on}\; \omega_\e,
\end{array} \right.
\end{equation}
where $g$ is a given function in ${\mathcal C}^\infty(\R^d)$. Equivalently, this function satisfies the following variational problem:
\begin{equation}\label{eq.varfcapfuncrobgenD}
\text{Find } v_\e \in H^1_{\Gamma_D}(\Omega) 
\text{ s.t. } \forall v \in H^1_{\Gamma_D}(\Omega), \quad 
\int_\Omega \gamma\nabla v \cdot \nabla v \:\d x 
+k_\e\int_{\omega_\e} v_\e v \:\d s =  \int_{\omega_\e}  gv \:\d s.
\end{equation}

Before starting, let us recall the following version of the Poincar\'e's inequality for functions vanishing on $\Gamma_D \setminus K_D$: there exists a constant $C >0$ depending on $\Omega$, $\Gamma_N$, $\Gamma_D$, $K_D$ and the bounds $\gamma^-$, $\gamma^+$ on the conductivity $\gamma$ in \cref{cond_ell} such that:
\begin{equation}\label{eq.PoincareD}
\forall v \in H^1_{\Gamma_D \setminus K_D}(\Omega), \quad  \frac{1}{C} \lvert\lvert v \lvert\lvert_{H^1(\Omega)}^2 \:\: \leq \:\:  \int_\Omega \gamma\lvert\nabla v \lvert^2 \:\d x  \:\: \leq \:\:  \lvert\lvert v \lvert\lvert_{H^1(\Omega)}^2.
\end{equation}

We now state a lemma containing preliminary estimates about $v_\e$.

\begin{lemma} \label{lemma_est_re}
Let $\omega_\e$ be a sequence of subsets of $\Gamma_D$ satisfying 
the separation assumption~\cref{eq.assumsepKD} and let $k_\e$ be any sequence 
of positive real numbers; let~$g\in \calC^\infty(\R^d)$, and let~$v_\e$ 
be the unique solution to~\cref{def_v}. 
Then,
\begin{enumerate}[(i)] 
\item The following pointwise bounds hold true:
\begin{equation}\label{eq.LinftyveD}
\frac{1}{k_\e} \min \left(0, \inf\limits_{x\in \overline{\Omega}} g(x) \right)  \: \leq \:  v_\e(x) \: \leq \: \frac{1}{k_\e} \max\left(0,\sup\limits_{x\in \overline{\Omega}} g(x)\right) \text{ for a.e. } x\in \overline\Omega.
 \end{equation} 


\item
The following $H^1$ estimate holds true:
\begin{equation} \label{est_H1ve}
||v_\e||_{H^1(\Omega)} \;\leq\; C ||g||_\infty \, \capar^D(\omega_\e,k_\e)^{1/2},
\end{equation}
where the constant $C$ depends on $\Omega$, $\Gamma_N$, $\Gamma_D$ and the compact $K_D$, but not on $\omega_\e$ or $k_\e$. 
\item The following improved $L^2$ estimate holds true:
\begin{equation} \label{est_L2ve}
||v_\e||_{L^2(\Omega)} \;\leq\; C ||g||_\infty \, \capar^D(\omega_\e,k_\e)^{3/4},
\end{equation}
where, again, $C$ depends only $\Omega$, $\Gamma_N$, $\Gamma_D$ and the compact $K_D$.
\end{enumerate}
\end{lemma}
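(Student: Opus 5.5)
The plan is to mirror the proof of \cref{prop.estvphie}, with the Robin-Dirichlet potential $\xi_\e = \xi(\omega_\e,k_\e)$ of \cref{def_xi} playing the role that $\chi_\e$ played there. Throughout, the test space is $H^1_{\Gamma_D\setminus\overline{\omega_\e}}(\Omega)$, and \cref{eq.PoincareD} gives uniform control of $H^1$ norms by the Dirichlet energy.

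\emph{Point (i).} We adapt the truncation argument from the proof of \cref{eq.Linftyvphie}. Set $M = \frac{1}{k_\e}\max(0,\sup g)$ and test the variational form with $\psi_\e = (v_\e - M)^+$. This function is admissible because $M\geq 0$, so $\psi_\e$ vanishes wherever $v_\e=0$ on $\Gamma_D\setminus\omega_\e$. We obtain
$$\int_\Omega\gamma|\nabla\psi_\e|^2\,\d x = \int_{\omega_\e}(g-k_\e v_\e)\psi_\e\,\d s.$$
On $\{v_\e\geq M\}$ we have $g - k_\e v_\e \leq g - k_\e M \leq 0$, so the right-hand side is nonpositive. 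Hence $\psi_\e$ is constant, and therefore zero. The lower bound follows symmetrically.

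\emph{Point (ii).} Testing with $v_\e$ gives
$$\int_\Omega \gamma|\nabla v_\e|^2\,\d x + k_\e\int_{\omega_\e} v_\e^2\,\d s = \int_{\omega_\e} g v_\e\,\d s.$$
The key step is to express the right-hand side through $\xi_\e$. Since $\gamma\partial_n\xi_\e + k_\e\xi_\e = 1$ on $\omega_\e$, we write $\int_{\omega_\e} g v_\e \,\d s= \int_{\omega_\e}(\gamma\partial_n \xi_\e + k_\e\xi_\e) g v_\e \,\d s$. Rather than expanding this, a cleaner route is to test the equation for $\xi_\e$ with $v_\e$. Because both functions are admissible test functions for each other's problems, this gives
$$\int_{\omega_\e} v_\e\,\d s = \int_\Omega\gamma\nabla\xi_\e\cdot\nabla v_\e\,\d x+k_\e\int_{\omega_\e}\xi_\e v_\e\,\d s.$$
For constant $g$ this identity, together with Cauchy--Schwarz in the energy inner product $\langle a,b\rangle=\int\gamma\nabla a\cdot\nabla b+k_\e\int_{\omega_\e}ab$, immediately gives $\|v_\e\|_{\mathrm{energy}}\leq |g|\,\capar^D(\omega_\e,k_\e)^{1/2}$, using \cref{def_cRD}.

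For general $g$ we argue by comparison instead. Let $G_\pm := \max(0,\pm\inf/\sup g)$, or simply $\|g\|_\infty$. The difference $\|g\|_\infty\xi_\e - v_\e$ solves the same problem with data $\|g\|_\infty - g\geq 0$, so by the argument of (i) it is nonnegative; likewise $\|g\|_\infty\xi_\e+v_\e\geq 0$. Hence $|v_\e|\leq\|g\|_\infty\xi_\e$. Then
$$\|v_\e\|^2_{\mathrm{energy}}=\int_{\omega_\e} g v_\e\,\d s\leq\|g\|_\infty^2\int_{\omega_\e}\xi_\e\,\d s=\|g\|_\infty^2\capar^D(\omega_\e,k_\e),$$
and \cref{eq.PoincareD} yields \cref{est_H1ve}. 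This step is where I expect the main subtlety: the pointwise comparison $|v_\e|\leq \|g\|_\infty\xi_\e$, i.e. a maximum principle for the Robin-Dirichlet problem with nonnegative boundary data. It is proved exactly as in (i) by testing with the negative part.

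\emph{Point (iii).} We use the Aubin--Nitsche trick. Let $p_\e\in H^1_{\Gamma_D}(\Omega)$ solve $-\dv(\gamma\nabla p_\e) = v_\e$ with the background boundary conditions of \cref{eq.conducbg2}. Then $p_\e = 0$ on $\Gamma_D$, and near $K_D$ elliptic regularity (away from $\overline{\Gamma_D}\cap\overline{\Gamma_N}$) gives
$$\|\gamma\partial_n p_\e\|_{\calC(K_D)}\leq C\|p_\e\|_{H^3(\calO_2\cap\Omega)}\leq C\|v_\e\|_{H^1(\Omega)}.$$
Green's formula gives
$$\int_\Omega v_\e^2\,\d x=\int_\Omega \gamma\nabla p_\e\cdot\nabla v_\e\,\d x-\int_{\partial\Omega}\gamma\partial_n p_\e\, v_\e\,\d s=-\int_{\omega_\e}\gamma\partial_n p_\e\,v_\e\,\d s,$$
where the bulk term vanishes because $p_\e$ is an admissible test function in \cref{eq.varfcapfuncrobgenD} and $p_\e=0$ on $\omega_\e$. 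Thus
$$\|v_\e\|_{L^2}^2\leq C\|v_\e\|_{H^1}\int_{\omega_\e}|v_\e|\,\d s\leq C\|v_\e\|_{H^1}\|g\|_\infty\int_{\omega_\e}\xi_\e\,\d s,$$
using again $|v_\e|\leq\|g\|_\infty\xi_\e$. Inserting (ii) yields
$$\|v_\e\|_{L^2}^2\leq C\|g\|_\infty^2\capar^D(\omega_\e,k_\e)^{3/2},$$
which is \cref{est_L2ve}.
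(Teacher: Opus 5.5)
Your proof is correct. Parts (i) and (iii) coincide with the paper's argument: truncation with $(v_\e-M)^+$, and Aubin--Nitsche duality with an auxiliary solution that is $\calC^1$ near $K_D$, which reduces $\|v_\e\|_{L^2(\Omega)}^2$ to $C\|v_\e\|_{H^1(\Omega)}\int_{\omega_\e}|v_\e|\,\d s$. The only difference there is cosmetic: you kill the bulk term by testing the problem for $v_\e$ with $p_\e$, instead of integrating by parts twice.

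The real difference is in (ii). Write $E(v):=\int_\Omega\gamma|\nabla v|^2\,\d x+k_\e\int_{\omega_\e}v^2\,\d s$.

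\begin{itemize}
\item The paper tests the problem for $\xi_\e$ with $|v_\e|$ and applies Cauchy--Schwarz in this energy product: $\int_{\omega_\e}|v_\e|\,\d s\le E(\xi_\e)^{1/2}E(|v_\e|)^{1/2}=\capar^D(\omega_\e,k_\e)^{1/2}E(v_\e)^{1/2}$. It then absorbs $E(v_\e)^{1/2}$ using $E(v_\e)\le\|g\|_\infty\int_{\omega_\e}|v_\e|\,\d s$.
\item You instead prove the comparison $|v_\e|\le\|g\|_\infty\,\xi_\e$ by the weak maximum principle, since $\|g\|_\infty\xi_\e\mp v_\e$ have nonnegative Robin data $\|g\|_\infty\mp g$. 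This gives $\int_{\omega_\e}|v_\e|\,\d s\le\|g\|_\infty\capar^D(\omega_\e,k_\e)$ at once.
\end{itemize}

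Both routes end at the same intermediate bound \cref{eq.furtheruseestH1}, which feeds (iii), with the same constants. Your route gives strictly more information: a pointwise domination that refines (i), since $\xi_\e\le 1/k_\e$. It also avoids the absorption step. The paper's route is a pure energy-duality argument that needs no order comparison between $v_\e$ and $\xi_\e$.
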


\begin{proof}
\textit{Proof of (i).} The proof is similar to that of $(i)$ in \cref{prop.estvphie}. Let $M$ be the quantity at the right-hand side of \cref{eq.LinftyveD}; setting
$\psi_\e(x) = \max(v_\e(x)-M,0)$ as test function in the variational problem \cref{eq.varfcapfuncrobgenD} satisfied by $v_\e$, we see that:
$$
\int_\Omega \gamma \lvert \nabla v_\e \lvert^2 \mathds{1}_{v_\e \geq M} \:\d x  + k_\e \int_{\omega_\e} v_\e (v_\e-M) \mathds{1}_{v_\e \geq M} \:\d s = \int_{\omega_\e} g (v_\e-M) \mathds{1}_{v_\e \geq M} \:\d s,
$$
and so:
$$
\int_\Omega \gamma \lvert \nabla v_\e \lvert^2 \mathds{1}_{v_\e \geq M} \:\d x  = \int_{\omega_\e} (g - k_\e v_\e) (v_\e-M) \mathds{1}_{v_\e \geq M} \:\d s \:\leq \: 0,
$$
as follows from the very definition of $M$. Hence, $v_\e(x) \leq M$ for a.e. $x \in \overline\Omega$, which is exactly the right-hand inequality in \cref{eq.LinftyveD}.
The left-hand inequality in this statement follows by a similar argument. 
\medskip

\noindent \textit{Proof of (ii).} We actually prove the stronger estimate 
\begin{equation}\label{eq.stronger_est_H1ve}
 \int_\Omega \gamma \lvert \nabla v_\e \lvert^2 \:\d x + k_\e \int_{\omega_\e} v_\e^2 \:\d s \;\leq\; C ||g||_\infty \, \capar^D(\omega_\e,k_\e),
 \end{equation}
which directly implies \cref{est_H1ve} upon application of the Poincar\'e's inequality \cref{eq.PoincareD} for functions vanishing on $\Gamma_D \setminus K_D$. 

To achieve this, we first take $v = v_\e$ as test function in the variational problem \cref{eq.varfcapfuncrobgenD} for $v_\e$, which yields: 
\begin{equation}\label{eq.aprioriH1veDbef}
\begin{array}{>{\displaystyle}cc>{\displaystyle}l}
 \int_\Omega \gamma \lvert \nabla v_\e \lvert^2 \:\d x + k_\e \int_{\omega_\e} v_\e^2 \:\d s &=& \int_{\omega_\e}  g v_\e \:\d s \\[1em]
  &\leq& \lvert\lvert g \lvert\lvert_{\calC(\overline\Omega)} \int_{\omega_\e} \lvert v_\e \lvert \:\d s.
 \end{array}
 \end{equation}
 
 Next, bringing into play the Robin-Dirichlet equilibrium distribution $\xi_\e := \xi(\omega_\e,k_\e)$ defined by \cref{def_xi}, a simple calculation yields:
 $$
\begin{array}{>{\displaystyle}cc>{\displaystyle}l}
\int_{\omega_\e} \lvert v_\e \lvert \:\d s &=& \int_{\omega_\e} \gamma \frac{\partial \xi_\e}{\partial n} \lvert v_\e\lvert \:\d s + \int_{\omega_\e} k_\e \xi_\e \lvert v_\e\lvert \:\d s \\[1em]
&=& \int_{\partial \Omega} \gamma \frac{\partial \xi_\e}{\partial n} \lvert v_\e\lvert \:\d s + \int_{\omega_\e} k_\e \xi_\e \lvert v_\e\lvert \:\d s \\[1em]
&=& \int_{\Omega} \gamma \nabla \xi_\e \cdot \nabla \lvert v_\e\lvert \:\d x + \int_{\omega_\e} k_\e \xi_\e \lvert v_\e\lvert \:\d s.
 \end{array}
 $$
The Cauchy-Schwarz inequality then implies that: 
 $$ \int_{\omega_\e} \lvert v_\e \lvert \:\d s \:\: \leq \:\: \left( \int_\Omega \gamma \lvert \nabla \xi_\e \lvert^2 \:\d x + k_\e \int_{\omega_\e} \xi_\e^2 \:\d s\right)^{\frac12} \left( \int_\Omega \gamma \lvert \nabla v_\e \lvert^2 \:\d x + k_\e \int_{\omega_\e} v_\e^2 \:\d s\right)^{\frac12}.$$
Inserting this inequality into \cref{eq.aprioriH1veDbef} and rearranging, we arrive at: 
$$  \int_\Omega \gamma \lvert \nabla v_\e \lvert^2 \:\d x + k_\e \int_{\omega_\e} v_\e^2 \:\d s \:\: \leq \:\: \lvert\lvert g \lvert\lvert_{\calC(\overline\Omega)}^2 \left( \int_\Omega \gamma \lvert \nabla \xi_\e \lvert^2 \:\d x + k_\e \int_{\omega_\e} \xi_\e^2 \:\d s\right).$$
Eventually, recalling the definition \cref{def_cRD} of the Robin-Dirichlet capacity $\capar^D(\omega_\e,k_\e)$, this proves the desired result \cref{eq.stronger_est_H1ve}, and thus \cref{est_H1ve}.

For further reference, note that we have proved in passing the following estimate:
\begin{equation}\label{eq.furtheruseestH1}
 \int_{\omega_\e} \lvert v_\e \lvert \:\d s \:\: \leq \:\: C \lvert\lvert g \lvert\lvert_{\calC(\overline\Omega)} \capar^D(\omega_\e,k_\e).
\end{equation}
\par\medskip

\noindent \textit{Proof of (iii).} We adapt the duality argument in the proof of point $(iii)$ in \cref{prop.estvphie} to the present situation.
Let $w_\e$ be the $H^1(\Omega)$ solution to the following boundary-value problem:
\begin{equation}\label{eq.pbpeD}
\left\{
\begin{array}{cl}
-\dv(\gamma\nabla w_\e) = -v_\e & \text{in } \Omega, \\
w_\e = 0 & \text{on } \Gamma_D,\\
\gamma \frac{\partial w_\e}{\partial n} = 0 & \text{on } \Gamma_N.
\end{array}
\right.
\end{equation} 
We introduce two open sets $\calO_1 \Subset \calO_2$ such that $K_D \subset \calO_1$ and $\overline{\calO_2} \cap \overline{\Gamma_N} = \emptyset$, 
as well as a smooth cut-off function $\eta$ satisfying $\eta \equiv 1$ on $\calO_1$ and $\eta\equiv 0$ on $\R^d \setminus \calO_2$.  
Here again, classical elliptic regularity theory and the Sobolev embedding theorem imply 
that $w_\e \in H^3(\calO_2 \cap \overline\Omega)$ with
\begin{equation}\label{eq.estC1we}
\lvert\lvert w_\e \lvert\lvert_{\calC^1(\calO_2 \cap \overline\Omega)} \: \leq \: C \lvert\lvert w_\e \lvert\lvert_{H^3(\calO_2 \cap \Omega)} \: \leq \: C \lvert\lvert v_\e \lvert\lvert_{H^1(\Omega)} \:\leq \: C \, ||g||_\infty \capar^D(\omega_\e,k_\e)^{1/2}.
\end{equation}
Now, repeated integrations by parts yield:
\begin{equation*} \label{est_4D}
\begin{array}{>{\displaystyle}cc>{\displaystyle}l}
\int_\Omega v_\e^2 \:\d x&=&  \int_\Omega \textrm{div}(\gamma \nabla w_\e) v_\e \:\d x \\[1em] 
&=& \int_{\partial\Omega} \gamma \frac{\partial w_\e}{\partial n} v_\e \:\d s  - \int_{\Omega} \gamma \nabla w_\e \cdot \nabla v_\e \:\d x\\[1em]
&=&  \int_{\partial\Omega} \gamma \frac{\partial w_\e}{\partial n} v_\e \:\d s  - \int_{\partial \Omega} \ \gamma \frac{\partial v_\e}{\partial n} w_\e \:\d s,\\[1em]
\end{array}
\end{equation*}
and so, using the boundary conditions satisfied by $v_\e$ and $w_\e$, see \cref{def_v,eq.pbpeD}:
$$ \int_\Omega v_\e^2 \:\d x =  \int_{\omega_\e} \gamma \frac{\partial w_\e}{\partial n} v_\e \:\d s.$$
Next, we estimate: 
$$
\begin{array}{>{\displaystyle}cc>{\displaystyle}l}
 \int_\Omega v_\e^2 \:\d x & \leq & \left\lvert \left\lvert w_\e \right\lvert \right\lvert_{\calC^1(\calO_2 \cap \overline\Omega)} \int_{\omega_\e} \lvert v_\e\lvert\:\d s\\[1em]
 &\leq& C \lvert\lvert g \lvert\lvert_{\calC^1(\overline\Omega)} \capar^D(\omega_\e,k_\e)^{\frac32},
\end{array}
$$
where we have used the $\calC^1$ estimate \cref{eq.estC1we} about $w_\e$ and the intermediate result \cref{eq.furtheruseestH1} from the proof of $(ii)$ in passing from the first line to the second one.
This terminates the proof of the desired estimate \cref{est_L2ve}.

\end{proof}

\subsection{Proof of \cref{thm2}: asymptotic expansion of $r_\e$}

\noindent The goal of this section is to prove the general asymptotic formula \cref{asymp_uD} for the perturbed voltage potential $u_\e$ by the replacement of the homogeneous Dirichlet boundary condition by a Robin boundary condition with arbitrary parameter $k_\e$ on a subset $\omega_\e \subset \Gamma_D$ of arbitrary shape. 

\begin{proof}[Proof of \cref{thm2}]

We follow again the approach of \cite{bonnetier_dapogny_vogelius}, and we first establish a representation formula for the error $r_\e = u_\e-u_0$ between the perturbed and background potentials, solution to \cref{eq_reD} at a fixed point $x \in \Omega$. It follows from the definition \cref{eq_Green} of the Green's function $G(x,y)$ and two successive integration by parts that:
\begin{eqnarray*}
r_\e(x) &=& 
-\ds\int_\Omega \textrm{div}_y(\gamma(y) \nabla_y G(x,y)) r_\e(y) \:\d y
\\
&=& \ds\int_\Omega \gamma(y) \nabla_y G(x,y) \cdot \nabla r_\e(y)  \:\d y
\;-\; \ds\int_{\partial \Omega} \gamma(y) \ds\frac{\partial G}{\partial n_y} (x,y) r_\e(y) \:\d s(y)
\\
&=&
-\ds\int_{\partial \Omega} \gamma(y) \ds\frac{\partial G}{\partial n_y}(x,y) r_\e(y)  \:\d s(y)
\;+\;
\ds\int_{\partial \Omega} \gamma(y) \frac{\partial r_\e}{\partial n}(y) G(x,y) \:\d s(y).
\end{eqnarray*}
In view of the boundary conditions satisfied by $G(x,\cdot)$ and $r_\e$, the above equation reduces to
\begin{equation} \label{eq_reprD}
r_\e(x) =- \int_{\omega_\e} \gamma(y) \ds\frac{\partial G}{\partial n_y} (x,y)r_\e(y)\:\d s(y).
\end{equation}
Observing that the function $y \mapsto \gamma(y) \frac{\partial G}{\partial n_y}(x,y)$ is smooth on $\omega_\e$, we introduce a smooth function $\phi \in \calC_c^\infty(\R^d)$ with compact support, 
which coincides with $\gamma(\cdot)G(x,\cdot)$ on $K_D$ and vanishes on a neighborhood of $\Gamma_N$ that does not intersection $K_D$. 


We now insert the Robin-Dirichlet equilibrium distribution $\xi_\e = \xi(\omega_\e,k_\e)$ defined in \cref{def_xi} into \cref{eq_reprD} via its Robin boundary condition on $\omega_\e$. An elementary calculation based on repeated applications of the Green's formula then yields: 
$$ 
\begin{array}{>{\displaystyle}cc>{\displaystyle}l}
r_\e(x) &=& - \int_{\omega_\e} \phi(y) r_\e(y)\:\d s(y) \\[1em]
&=& -\int_{\omega_\e} \gamma \frac{\partial \xi_\e}{\partial n} \phi r_\e \:\d s - k_\e \int_{\omega_\e}  \xi_\e \phi r_\e \:\d s\\[1em]
&=& -\int_{\partial \Omega} \gamma \frac{\partial \xi_\e}{\partial n} \phi r_\e \:\d s - k_\e \int_{\omega_\e}  \xi_\e \phi r_\e \:\d s\\[1em]
&=& -\int_{\Omega} \gamma \nabla \xi_\e \cdot \nabla (\phi r_\e) \:\d x - k_\e \int_{\omega_\e}  \xi_\e \phi r_\e \:\d s\\[1em]
&=& -\int_{\Omega} \gamma \nabla (\phi \xi_\e) \cdot \nabla r_\e \:\d x - k_\e \int_{\omega_\e}  \xi_\e \phi r_\e \:\d s + R_\e\\[1em]
&=& -\int_{\omega_\e} \gamma  \frac{\partial r_\e}{\partial n} \phi \xi_\e \:\d s - k_\e \int_{\omega_\e}  \xi_\e \phi r_\e \:\d s + R_\e,
\end{array}
$$
and so:
\begin{equation}\label{eq.estreDfinal}
r_\e(x) = \int_{\omega_\e} \gamma  \frac{\partial u_0}{\partial n} \phi \xi_\e \:\d s + R_\e.
\end{equation}

Here we have used the fact that $r_\e$ and $\xi_\e$ are solutions to \cref{eq_reD}
and \cref{def_xi} respectively, and we have set:
$$ R_\e = -\int_\Omega (\gamma \nabla \xi_\e \cdot \nabla \phi) \: r_\e \:\d x +  \int_\Omega (\gamma \nabla r_\e \cdot \nabla \phi) \:\xi_\e \:\d x.$$
Now applying the ``classical'' $H^1$ and improved $L^2$ estimates about $r_\e$ and $\xi_\e$ contained in \cref{lemma_est_re} to control the remainder $R_\e$, we obtain:
\begin{equation} \label{eq.estReD}
\begin{array}{>{\displaystyle}cc>{\displaystyle}l}
|R_\e| &\leq& \lvert\lvert \phi \lvert\lvert_{\calC^1(\R^d)} \Big( \lvert\lvert \nabla \xi_\e \lvert\lvert_{L^2(\Omega)^d} \lvert\lvert r_\e \lvert\lvert_{L^2(\Omega)}  +  \lvert\lvert \nabla r_\e \lvert\lvert_{L^2(\Omega)^d} \lvert\lvert \xi_\e \lvert\lvert_{L^2(\Omega)}\Big) \\[1em]
&\leq&  C \,\lvert\lvert \phi \lvert\lvert_{\calC^1(\R^d)}  \: \capar^D(\omega_\e,k_\e)^{\frac54}.
\end{array}
\end{equation}
\medskip

On a different note, as another consequence of \cref{lemma_est_re}, the following estimate holds true:
for any $\psi \in {\mathcal C}^1(\Gamma_D)$
$$
\forall \psi \in \calC(\partial \Omega), \quad \frac{1}{\capar^D(\omega_\e,k_\e)}
\int_{\omega_\e}  \xi_\e  \psi \:\d s
\:\: \leq\:\: C ||\psi||_{\calC(\partial \Omega)}.
$$
Invoking the Banach-Alaoglu Theorem, there exists a subsequence of the $\e$ (which is not relabelled for simplicity) 
and a non negative Radon measure $\mu$ supported in $K_D$ such that for any 
$\phi \in {\mathcal C}^1_0(\Gamma_D)$,
$$\text{For any function } \phi \in \calC^1(\overline{\Gamma_D}), \quad \frac{1}{\capar^D(\omega_\e,k_\e)}
\ds\int_{\omega_\e} 
\xi_\e \phi 
\:\:\xrightarrow{\e\to 0} \:\:  \left\langle \mu, \phi \right\rangle.
$$
Combining \cref{eq.estreDfinal} with \cref{eq.estReD}, we therefore obtain:
$$r_\e(x) =  \capar^D(\omega_\e,k_\e) \, 
\left\langle\mu, \gamma \frac{\partial u_0}{\partial n} \gamma\frac{\partial G}{\partial n_y}(x,\cdot)\right\rangle
\;+\; \o\left(\capar^D(\omega_\e,k_\e)\right), 
$$
which terminates the proof.

\end{proof}

\subsection{Proof of \cref{thm4}}

\noindent We now turn to analyze the behavior of the Robin-Dirichlet capacity $\capar^D(\omega,k)$ depending on the value of the admittance parameter $k$. 
In particular,  in the regime where $k \to 0$, we compare this quantity with that $e(\omega)$ measuring the smallness of the support $\omega$ of a replacement of the homogeneous Dirichlet condition $u=0$ with a Neumann condition, in line with the intuition that the Robin condition $\gamma \frac{\partial u}{\partial n} + ku = 0$ formally reduces to a change with the Neumann condition $\gamma\frac{\partial u}{\partial n} = 0$ in this regime.
\medskip

\noindent\textit{Proof of (i):}
Recalling the definitions \cref{eq.capfuncD} and \cref{def_xi} of the equilibrium distributions $\xi^0 =\xi^0(\omega)$
and $\xi = \xi(\omega,k)$ and that \cref{def_cRD} of the Robin-Dirichlet capacity, we see that:
$$
\begin{array}{>{\displaystyle}cc>{\displaystyle}l}
\capar^D(\omega,k)  & = & \int_\omega \xi \: \d s \\[1em] 
&=&  \int_\omega \gamma \frac{\partial \xi^0}{\partial n} \xi \:\d s \\[1em]
&=&  \int_\Omega  \gamma \nabla \xi^0 \cdot \nabla \xi \:\d x \\[1em]
&\leq& \lvert\lvert\nabla \xi^0 \lvert\lvert_{L^2(\Omega)^d} \lvert\lvert\nabla \xi \lvert\lvert_{L^2(\Omega)^d}
\end{array}
$$
and the desired estimate \cref{est_cRD_e} follows from \cref{prop_1} and \cref{lemma_est_re}.
\medskip

\noindent\textit{Proof of (ii):}
Let $\omega \Subset \Gamma_D$ and $k > 0$ be given. A simple application of the Cauchy-Schwarz inequality reveals that:
$$
\capar^D(\omega,k)  \:=\: \int_\omega \xi \:\d s \;\leq\; |\omega|^{1/2} \, ||\xi||_{L^2(\omega)}.
$$
On a different note, the very definition \cref{def_cRD} of the Robin-Dirichlet capacity shows that the estimate
$$
k ||\xi||_{L^2(\omega)}^2 \:\: \leq \:\:\capar^D(\omega,k).
$$
Combining both inequalities and rearranging, we arrive at: 
$$\capar^D(\omega,k) \leq \frac{1}{k}|\omega|,$$
as desired.
\medskip

\noindent\textit{Proof of (iii):}
Let $\omega_\e$ be a sequence of subsets of the fixed compact $K_D \subset \Gamma_D$, and let $k_\e$ be a sequence of positive real numbers such that $k_\e \to 0$ as $\e \to 0$. 
Let $\xi_\e^0 = \xi^0(\omega_\e,k_\e)$ and $\xi_\e = \xi(\omega_\e,k_\e)$ be the equilibrium distributions characterized by \cref{eq.capfuncD} and \cref{def_xi}, respectively. 
Then, we calculate:
$$
\begin{array}{>{\displaystyle}cc>{\displaystyle}l}
e(\omega_\e) &\leq &C \int_\Omega \gamma |\nabla \xi_\e^0|^2 \:\d s \\[1em] 
&=&C \int_{\omega_\e} \xi_\e^0 \:\d s \\[1em] 
&=& C \int_{\omega_\e} \Big(\gamma\frac{\partial\xi_\e}{\partial n} + k_\e \xi_\e \Big)  \xi_\e^0 \:\d s \\[1em]
&=& C \left( \int_{\partial \Omega}\gamma\frac{\partial\xi_\e}{\partial n}  \xi_\e^0 \:\d s +  \int_{\omega_\e} k_\e \xi_\e \xi_\e^0 \:\d s \right).
\end{array}
$$ 
Now using the Green's formula and the Cauchy-Schwarz inequality, we arrive at:
$$
\begin{array}{>{\displaystyle}cc>{\displaystyle}l}
e(\omega_\e) &\leq &\Big( \int_\Omega \gamma |\nabla \xi_\e^0|^2 \:\d x  + k_\e \int_{\omega_\e}  \lvert \xi_\e^0 \lvert^2 \:\d s \Big)^{1/2}\Big( \int_\Omega \gamma |\nabla \xi_\e|^2 \:\d x  + k_\e \int_{\omega_\e}  \lvert \xi_\e \lvert^2 \:\d s \Big)^{1/2}\\[1em]
&\leq& C\, (1 + k_\e)^{1/2}\, ||\xi_\e^0||_{H^1(\Omega)} \capar^D(\omega_\e,k_\e)^{1/2},
\end{array}
$$
where we have used the continuity of the trace on $H^1_{\Gamma_D}$ to pass form the first line to the second one.  
Invoking again \cref{prop_1} together with \cref{est_cRD_e}, we obtain the desired estimate.

\subsection{Monotonicity of the Robin-Dirichlet capacity: proof of \cref{prop.monotonecaparD}}\label{sec.moncaprD}

\noindent \textit{Proof of~(i):} Let $\xi_1 := \xi(\omega,k_1)$ and $\xi_2 := \xi(\omega,k_2)$ be the versions of the equilibrium distribution in \cref{def_xi} associated to the Robin conditions parametrized by $k_1$ and $k_2$, respectively. 
The difference $r := \xi_2 - \xi_1$ between both functions is the unique $H^1(\Omega)$ solution to the following boundary value problem:
\begin{equation}\label{eq.monrDk} 
\left\{ \begin{array}{ccll}
- \dv(\gamma \nabla r)  &=& 0 & \textrm{in}\; \Omega,
\\[5pt]
r &=& 0 & \textrm{on}\; \Gamma_D,
\\[5pt]
\gamma\frac{\partial r}{\partial n} &=& 0 & \textrm{on}\; \Gamma_N \setminus \overline\omega,
\\[5pt]
\gamma\frac{\partial r}{\partial n} + k_2 r &=& (k_1-k_2) \xi_1 & \textrm{on}\; \omega.
\end{array} \right.
\end{equation}

The ordering $k_1 \leq k_2$ and the non negativity of $\xi_1$ imply that the term $(k_1-k_2) \chi_1$ at the right-hand side of the Robin boundary condition in \cref{eq.monrDk} is non positive. Invoking the pointwise bounds \cref{eq.LinftyveD}, we obtain:
$$
r = \xi_2 - \xi_1  \:\: \leq \:\: 0 \text{ on }\overline\Omega.
$$

We now readily infer from the definition \cref{def_cRD} of the Robin-Dirichlet capacity that:
$$ 
\capar^D(\omega,k_2) - \capar^D(\omega,k_1 ) \:\: = \:\: \int_{\omega} (\xi_2-\xi_1) \:\d s \:\:\leq \:\: 0,
$$
which is the expected conclusion.\par\medskip

\noindent \noindent \textit{Proof of~(ii):} Let now $k$ be a positive real number and let $\omega_1 \subset \omega_2$ be two subsets of the Dirichlet region $\Gamma_D$. We set $\xi_1 := \xi(\omega_1,k)$ and $\xi_2 := \xi(\omega_2,k)$.
An elementary  calculation shows that the difference $r := \xi_2 - \xi_1$ between both functions is the unique $H^1(\Omega)$ solution to the following boundary value problem:
\begin{equation}\label{eq.monrDom} 
\left\{ \begin{array}{ccll}
- \dv(\gamma \nabla r)  &=& 0 & \textrm{in}\; \Omega,\\[5pt]
\gamma\frac{\partial r}{\partial n} &=& 0 & \textrm{on}\; \Gamma_N,\\[5pt]
r &=& 0 & \textrm{on}\; \Gamma_D \setminus \overline{\omega_2},\\[5pt]
r &=& \xi_2 & \textrm{on}\; \omega_2 \setminus \overline{\omega_1},\\[5pt]
\gamma\frac{\partial r}{\partial n} + k r &=& 0 & \textrm{on}\; \omega_1.
\end{array} \right.
\end{equation}

Since $\xi_2 \geq 0$ on $\overline\Omega$, a straightforward adaptation of the proof of the pointwise bounds \cref{eq.Linftyvphie} shows that:
\begin{equation}\label{eq.rnegmonrDom}
r = \xi_2 - \xi_1 \:\geq \:0 \text{ on }\overline\Omega.
\end{equation}

Let us eventually calculate:
$$ 
\begin{array}{>{\displaystyle}cc>{\displaystyle}l}
\capar^D(\omega_2,k) - \capar^D(\omega_1,k) &=& \int_{\omega_2} \xi_2  \:\d s -  \int_{\omega_1} \xi_1 \:\d s \\[1em]
&=& \int_{\omega_2 \setminus \overline{\omega_1}} \xi_2 \:\d s  + \int_{\omega_1} (\xi_2 - \xi_1) \:\d s.
\end{array}
$$
The first term in the above right-hand side is non negative since $\xi_2 \geq 0$, while the second term in there is also non negative because of \cref{eq.rnegmonrDom}. This terminates the proof of $(ii)$.\par\medskip


\section{Intrinsic definitions of $\capar^N(\omega,k)$ and $\capar^D(\omega,k)$} \label{sec_6}


\noindent In this section, we introduce quantities that are equivalent to the capacities $\capar^D(\omega,k)$ and $\capar^N(\omega,k)$, 
but that are perhaps more natural, as they do not depend on the conductivity $\gamma$, the domain $\Omega$ or the regions $\Gamma_D$ and $\Gamma_N$. 
\medskip

As in the previous sections, $\Omega$ denotes a smooth bounded open 
subset of $\R^d$. Let  $\omega$ be a relatively open subset of $\partial \Omega$ and let $n(x)$ 
be the unit normal vector to $\partial \Omega$ at $ x \in \omega$, pointing outward $\Omega$.
For a quantity $\alpha$ which is smooth from either side of $\omega$ but possibly discontinuous across $\omega$, we denote by 
\begin{equation*}
\forall x \in \partial \Omega, \quad \alpha^\pm (x) := \lim_{t \to 0^+} \alpha (x \pm t n(x)),
\end{equation*}
the one-sided limits of $\alpha$ at $x$. 

We introduce two additional auxiliary functions, $z,w \in H^1(\R^d \setminus \overline{\omega})$,
defined via the following boundary-value problems:
\begin{equation} \label{def_zw}
\left\{ \begin{array}{cccl}
- \Delta z + z &=& 0 &\textrm{in}\; \R^d \setminus \overline\omega,
\\
\frac{\partial z^+}{\partial n} - k z^+ &=& 1 & \textrm{on}\; \omega,
\\
\frac{\partial z^-}{\partial n} + k z^- &=& 1 & \textrm{on}\; \omega.
\end{array}\right.
\quad
\left\{ \begin{array}{cccl}
- \Delta w + w &=& 0 &\textrm{in}\; \R^d \setminus \overline\omega,
\\
\frac{\partial w^+}{\partial n}  - k w^+ &=& -k  & \textrm{on}\; \omega,
\\
\frac{\partial w^-}{\partial n}  + k w^- &=& k  & \textrm{on}\; \omega.
\end{array}\right.
\end{equation}
The associated variational formulations read:
\begin{equation}\label{eq.varfwint}
\text{For all } v \in H^1(\R^d \setminus \overline\omega), \quad \int_{\R^d \setminus \overline{\omega}}  \nabla w \cdot \nabla v  \:\d x + \int_{\R^d} w v \:\d x 
\;+\; k\, \int_{\omega} (w^- v^- + w^+v^+) \:\d s  =  k \, \ds\int_\omega (v^- + v^+) \:\d s,
\end{equation}
and 
$$
\text{For all } v \in H^1(\R^d \setminus \overline\omega), \quad \int_{\R^d \setminus \overline{\omega}} \nabla z \cdot \nabla v \:\d x + \int_{\R^d} z v \:\d x
\;+\; k\, \ds\int_{\omega} (z^- v^- + z^+ v^+) \:\d s = \ds\int_\omega (v^- - v^+) \:\d s.
$$
Their well-posedness is immediately ensured by the Lax Milgram theorem.
\medskip

Let now $\omega$ be an oriented Lipschitz open surface in $\R^d$, and let $k>0$. We define the intrinsic Robin-Neumann capacity $C^N(\omega,k)$ of $\omega$ with parameter $k$ by the formula:
\begin{equation}\label{eq.defCNomk}
 C^N(\omega,k) =  - \int_\omega \left[ \frac{\partial w}{\partial n }\right] \:\d s .
 \end{equation}
Using the Robin boundary conditions featured in the problem \cref{def_zw}, this quantity rewrites:
$$C^N(\omega,k) = k \ds\int_\omega \Big((1-w^-) + (1 - w^+) \Big) \:\d s.$$
Alternatively, invoking the variational form \cref{eq.varfwint} for $w$, we obtain yet another expression of $C^N(\omega,k)$: 
\begin{equation}\label{eq.altCN}
\begin{array}{>{\displaystyle}cc>{\displaystyle}l}
C^N(\omega,k) &=&
\ds\int_{\R^d \setminus \overline{\omega}}
 |\nabla w|^2 \:\d x + \int_{\R^d}w^2 \:\d x \;+\; k\ds\int_{\omega} \Big((w^-)^2 + (w^+)^2\Big)\:\d s \\[1em]
&& \hspace{5cm}\;-\; k \, \ds\int_\omega \Big(w^- + w^+\Big) \:\d s
\;+\; k \ds\int_\omega \Big((1-w^-) + (1 - w^+)\Big)\:\d s
\\
&=&
\ds\int_{\R^d \setminus \overline{\omega}}
 |\nabla w|^2 \:\d x  + \int_{\R^d} w^2 \:\d x \;+\; k\ds\int_{\omega} \Big((1-w^-)^2 + (1-w^+)^2\Big) \:\d s.
\end{array}
\end{equation}

In a similar spirit, we define the intrinsic Robin-Dirichlet capacity $C^D(\omega,k)$ of $\omega$ with parameter $k$ as:
\begin{equation}\label{eq.defCD}
C^D(\omega,k) 
\;=\; 
 \ds\int_\omega (z^- - z^+)\:\d s,
 \end{equation}
 which alternatively equals:
\begin{equation}\label{eq.altCD}
C^D(\omega,k) 
\;=\; 
\ds\int_{\R^d \setminus \overline{\omega}}
 |\nabla z|^2 \:\d x + \int_{\R^d} z^2 \:\d x \;+\; k\, \ds\int_{\omega} \Big((z^-)^2 + (z^+)^2\Big)\:\d s.
\end{equation}
\medskip

Our main result concerning the equivalence between the quantities $\capar^D(\omega,k)$ and the intrinsic version $C^D(\omega,k)$ is the following.

\begin{proposition}\label{prop.eqCD}
Let $\Omega$ be a smooth bounded domain in $\R^d$, whose boundary $\partial \Omega$ is the disjoint reunion of a Neumann and a Dirichlet region $\Gamma_N$, $\Gamma_D$, respectively. Let $\omega \subset \Gamma_D$ be a Lipschitz open subset which is well-inside $\Gamma_D$, in the sense that \cref{eq.assumsepKD} holds true. Let also $k>0$ be a given positive real number. Then there exists a constant $C$ which depends on $\Omega$, $\Gamma_N$, $\Gamma_D$ and the compact set $K_D$, but not on $\omega$ and $k$ such that:
\begin{equation} \label{est_CD}
C^{-1}\,C^D(\omega,k) 
\;\leq\; \capar^D(\omega,k) \;\leq\; 
C\, C^D(\omega,k),
\end{equation}
\end{proposition}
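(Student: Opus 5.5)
The plan is to replace both capacities by dual (Rayleigh-quotient) characterizations, and then compare the two quotients by transporting test functions between $\Omega$ and $\R^d \setminus \overline\omega$ with operations that depend only on $\Omega$, $\Gamma_N$, $\Gamma_D$ and $K_D$.

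\emph{Step 1: variational characterizations.} For $v \in H^1_{\Gamma_D\setminus\overline\omega}(\Omega)$, set $a(v,v) := \int_\Omega \gamma|\nabla v|^2\,\d x + k\int_\omega v^2\,\d s$. The function $\xi$ satisfies $a(\xi,v)=\int_\omega v\,\d s$, and by \cref{def_cRD} we have $\capar^D(\omega,k)=a(\xi,\xi)=\int_\omega\xi\,\d s$. The Cauchy--Schwarz inequality in the $a$-inner product, with equality at $v=\xi$, then gives
$$\capar^D(\omega,k)=\sup_{v\neq 0}\frac{\big(\int_\omega v\,\d s\big)^2}{a(v,v)}.$$
In the same way, using the variational form of $z$ and \cref{eq.altCD}, we get
$$C^D(\omega,k)=\sup_{V\neq0}\frac{\big(\int_\omega (V^--V^+)\,\d s\big)^2}{b(V,V)},$$
where the supremum runs over $V\in H^1(\R^d\setminus\overline\omega)$ and $b(V,V):=\|V\|^2_{H^1(\R^d\setminus\overline\omega)}+k\int_\omega((V^-)^2+(V^+)^2)\,\d s$. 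Both estimates in \cref{est_CD} thus reduce to mapping test functions of one problem to test functions of the other, keeping the numerator unchanged and the energy controlled by a constant independent of $\omega$ and $k$.

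\emph{Step 2: $\capar^D\le C\,C^D$.} Fix a cut-off $\eta$ with $\eta\equiv1$ near $K_D$ and $\eta\equiv0$ near $\overline{\Gamma_N}$. Given $v$ admissible for $a$, define $V:=\eta v$ in $\Omega$ and $V:=0$ in $\R^d\setminus\overline\Omega$. The trace of $\eta v$ vanishes on $\partial\Omega\setminus\overline\omega$, so $V\in H^1(\R^d\setminus\overline\omega)$. Since the normal points outward, $V^-=v$ and $V^+=0$ on $\omega$. Hence the numerators coincide. For the energies, the Poincar\'e inequality \cref{eq.PoincareD} gives $b(V,V)\le C\|v\|^2_{H^1(\Omega)}+k\int_\omega v^2\,\d s\le C\,a(v,v)$. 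Taking suprema yields $\capar^D\le C\,C^D$.

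\emph{Step 3: $C^D\le C\,\capar^D$.} This is the step I expect to be the main obstacle, because $V$ lives on both sides of $\omega$ and we must build a function on $\Omega$ whose trace on $\omega$ is the jump $V^--V^+$ and which vanishes on $\Gamma_D\setminus\overline\omega$. I would use a smooth reflection $\mathcal R$ across $\partial\Omega$, defined in a tubular neighborhood of a fixed neighborhood of $K_D$ and fixing $\partial\Omega$ pointwise. Set $W:=V\circ\mathcal R$ in the corresponding part of $\Omega$, so that the trace of $W$ on $\partial\Omega$ is the exterior trace of $V$. Then define $v:=\eta\,(V|_\Omega-W)$. On $\omega$ its trace is $V^--V^+$. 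On $\partial\Omega\setminus\overline\omega$, $V\in H^1$ across the boundary, so its interior and exterior traces agree and $v$ vanishes there; the cut-off $\eta$ makes $v$ vanish on $\Gamma_N$ and on the rest of $\Gamma_D$. Thus $v\in H^1_{\Gamma_D\setminus\overline\omega}(\Omega)$ and the numerators agree. For the energies, $\int_\Omega\gamma|\nabla v|^2\,\d x\le C\|V\|^2_{H^1(\R^d\setminus\overline\omega)}$ because $\mathcal R$ is bi-Lipschitz, and $k\int_\omega v^2\,\d s\le 2k\int_\omega((V^-)^2+(V^+)^2)\,\d s$. Hence $a(v,v)\le C\,b(V,V)$, which gives $C^D\le C\,\capar^D$.

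The delicate points I expect to check are, first, that $\mathcal R$ and $\eta$ depend only on $\Omega$, $\Gamma_D$ and $K_D$, so that all constants are independent of $\omega$ and $k$; and second, the justification that the jump of $V$ across $\partial\Omega\setminus\overline\omega$ vanishes in the trace sense, which I would obtain by density of smooth functions on each side.
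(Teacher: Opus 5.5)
Your proof is correct and is essentially the paper's argument. The paper tests the variational problem for $z$ against $\phi\xi$ extended by zero (your Step 2 with $v=\xi$), tests the problem for $\xi$ against $z|_\Omega-\Eint z^+$, and concludes by Cauchy--Schwarz in both directions, which is exactly what your Rayleigh-quotient formulation encodes. The only difference is that you build the function on $\Omega$ with trace $V^--V^+$ by a reflection across $\partial\Omega$ plus a cut-off, whereas the paper uses the $\gamma$-harmonic extension $\Eint$ of the exterior trace; this avoids the tubular neighbourhood and cut-off and directly gives a function vanishing on all of $\partial\Omega\setminus\overline\omega$.
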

\begin{proof}
Throughout the proof, we denote by $C>0$ a constant, that may change from one line to the other and that depends on $\Omega$, $\Gamma_N$, $\Gamma_D$, the bounds $\gamma^-$ and $\gamma^+$ on the conductivity $\gamma$ and the compact set $K_D$, but that is in any case independent of $\omega$ and $k$. \par\medskip

We first prove the left-hand inequality in \cref{est_CD}. Using the definition \cref{eq.defCD} of $C^D(\omega,k)$ and bringing into play the Robin-Dirichlet equilibrium distribution $\xi \equiv \xi(\omega,k)$ defined by \cref{def_xi} via its Robin boundary condition, we obtain:
\begin{equation*}
\begin{array}{>{\displaystyle}cc>{\displaystyle}l}
C^D(\omega,k) &=&  \int_{\omega} (z^- - z^+)\,\d s \\[1em]
&=& \int_{\omega} \left( \gamma \frac{\partial\xi}{\partial n} + k \xi \right)(z^- - z^+) \,\d s  \\[1em]
&=& \int_{\partial \Omega}\left( \gamma \frac{\partial\xi}{\partial n} + k \xi \right)(z^- - z^+) \,\d s,
\end{array}
\end{equation*}
where the final line follows from the continuity of $z$ across $\partial \Omega \setminus \overline\omega$.

Let us now introduce the extension operator 
$\Eint~: H^1(\R^d \setminus \overline{\Omega}) \to H^1(\Omega)$ which to 
$v \in H^1(\R^d \setminus \overline{\Omega})$ associates the unique $H^1(\Omega)$ solution $\Eint v$ to the boundary-value problem:
\begin{equation}\label{eq.defEext}
 \left\{\begin{array}{cccl}
-\textrm{div}(\gamma \nabla \Eint v) &=& 0 &\textrm{in}\; \Omega,
\\ 
\Eint v &=& v & \textrm{on}\; \partial \Omega.
\end{array}\right.
\end{equation}
It is immediate to see that there exists a constant $C >0$ such that: 
\begin{eqnarray*}
\forall v \in H^1(\R^d \setminus \overline\Omega), \quad
||\Eint v||_{H^1(\Omega)} &\leq& C ||v||_{H^1(\R^d \setminus \overline{\Omega})}.
\end{eqnarray*}
We thus obtain, with the help of the Green's formula:
\begin{equation*}
\begin{array}{>{\displaystyle}cc>{\displaystyle}l}
C^D(\omega,k)  &=&
\int_{\partial \Omega} \gamma \frac{\partial \xi}{\partial n} \Big(z^- - \Eint z^+) \:\d s +  \int_{\omega}k \xi(z^- - z^+) \, \d s \\[1em]
&=&
\int_\Omega \gamma \nabla \xi \cdot \nabla(z^- - \Eint z^+) \:\d x \;+\;
\ds\int_{\omega} k \xi(z^- - z^+) \, \d s.
\end{array}
\end{equation*}
Now using the Cauchy-Schwarz inequality, we arrive at:
\begin{eqnarray*}
C^D(\omega,k)
&\leq& 
|| \nabla \xi||_{L^2(\Omega)^d} ||\nabla(z^- - \Eint z^+)||_{L^2(\Omega)^d}
\;+\; 
k \lvert\lvert \xi \lvert\lvert_{L^2(\omega)} \; ||z^- - z^+||_{L^2(\omega)} 
\\
&\leq&
\left( \ds\int_\Omega \gamma |\nabla \xi|^2 \:\d x + k \ds\int_\omega \xi^2 \:\d s \right)^{1/2}
\;
\left( \ds\int_\Omega |\nabla (z^- - \Eint z^+)|^2 \:\d x + k \ds\int_\omega (z^- - z^+)^2 \:\d s \right)^{1/2}
\\
&\leq& C\, \capar^D(\omega,k)^{1/2} \; 
\left( \ds\int_{\R^d \setminus \overline{\omega}} |\nabla z|^2  \:\d x
+ k \ds\int_\omega \Big( (z^+)^2 + (z^-)^2 \Big) \:\d s \right)^{1/2},
\end{eqnarray*}
where we have used the energy estimate \cref{est_H1ve} to obtain the final line. Now using the alternative expression \cref{eq.altCD} for $C^D(\omega,k)$, we arrive at:
$$
C^D(\omega,k) \: \leq \: C \capar^D(\omega,k),
$$
as desired.
\medskip

We now turn to the right-hand inequality in \cref{est_CD}, recalling the definition \cref{def_xi} of the Robin-Dirichlet equilibrium potential $\xi$, and that \cref{def_cRD} of the corresponding capacity.
Let us introduce a smooth cut-off function $\phi \in \calC_c^\infty(\R^d)$ which equals $1$ on $K_D$ and $0$ on $\Gamma_N$. 
We calculate:
\begin{equation*}
\begin{array}{>{\displaystyle}cc>{\displaystyle}l}
\ds\int_{\omega} \xi \:\d s
&=& \int_{\omega} \xi \left( \frac{\partial z^-}{\partial n} + k z^- \right) \, \d s \\[1em]
&=& \int_{\partial \Omega} \phi \xi \left( \frac{\partial z^-}{\partial n} + k z^- \right) \, \d s \\[1em]
&=&
\ds\int_{\Omega} \phi \xi \, \Delta z \,\d x \;+\;
\ds\int_{\Omega} \nabla (\phi \xi) \cdot \nabla z \,\d x
\;+\;
k  \ds\int_{\partial \Omega} \phi \xi z^- \,\d s \\[1em]
&=&
\ds\int_{\Omega} \Big( \nabla (\phi \xi) \cdot \nabla z  + \phi \xi z \Big) \,\d x
\;+\;
k \ds\int_{\omega} \phi \xi z^- \,\d s,
\end{array}
\end{equation*}
where the final line follows from integration by parts and the problem \cref{def_zw} satisfied by $z$. 

Combining now the Cauchy-Schwarz inequality and the Poincar\'e's inequality, we obtain:
\begin{equation*}
\begin{array}{>{\displaystyle}cc>{\displaystyle}l}
 \int_\omega \xi \:\d s &\leq&
C \, \Big( ||\xi ||_{H^1(\Omega)} \; ||z||_{H^1(\Omega)}
\;+\;  k ||\xi||_{L^2(\omega )} \; ||z^-||_{L^2(\omega)} \Big)
\\
&\leq& 
C \, \left( \ds\int_\Omega \gamma |\nabla \xi|^2 \:\d x  \;+\; k  \ds\int_{\omega } \xi^2 \:\d s  \right)^{1/2}
\; 
\left( ||z||_{H^1(\R^d \setminus \overline{\omega})}^2 \;+\; k  
\ds\int_{\omega } \Big((z^+)^2 + (z^-)^2 \Big) \:\d s \right)^{1/2}.
\end{array}
\end{equation*}
Now using again the formula \cref{def_cRD} for $\capar^D(\omega,k)$ as well as the alternative expression \cref{eq.altCD} for $C^D(\omega,k)$, we obtain:
$$
 \capar^D(\omega,k)  \: \leq \: CC^D(\omega,k),
$$
which is the expected result.
\end{proof}

The corresponding result about the equivalence between Robin-Neumann capacities $\capar^N(\omega,k)$ and $C^N(\omega,k)$ is the following.

\begin{proposition}
Let $\Omega$ be a smooth bounded domain in $\R^d$, whose boundary $\partial \Omega$ is the disjoint reunion of a Neumann and a Dirichlet region $\Gamma_N$, $\Gamma_D$, respectively. Let $\omega \subset \Gamma_D$ be a Lipschitz open subset which is well-inside $\Gamma_D$, in the sense that \cref{eq.assumsepKN} holds true. Let also $k>0$ be a given positive real number. Then there exists a constant $C$ which depends on $\Omega$, $\Gamma_N$, $\Gamma_D$, the conductivity $\gamma$ and the compact set $K_N$, but not on $\omega$ and $k$ such that:
\begin{equation} \label{est_CN}
C^{-1}\,C^N(\omega,k) 
\;\leq\; \capar^N(\omega,k) \;\leq\; 
C\, C^N(\omega,k),
\end{equation}
\end{proposition}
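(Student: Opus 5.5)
The plan is to compare $\capar^N(\omega,k)$ and $C^N(\omega,k)$ through variational (minimum-energy) characterizations, building competitors from one problem for the other. This differs slightly from the duality computation used for \cref{prop.eqCD}, but I expect it to be cleaner. (The hypothesis is of course meant to read $\omega \subset \Gamma_N$, with $\omega \subset K_N$ by \cref{eq.assumsepKN}.)

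\emph{Step 1: two minimum principles.}

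For $\capar^N$: testing \cref{def_chivarf} with $\chi$ gives $\int_\Omega \gamma|\nabla\chi|^2\,\d x = k\int_\omega \chi(1-\chi)\,\d s$. Adding this to \cref{eq.defcapar} yields
$$\capar^N(\omega,k) = \int_\Omega \gamma|\nabla \chi|^2\,\d x + k\int_\omega (1-\chi)^2\,\d s.$$
The Euler--Lagrange equation of the strictly convex functional $J(v)=\int_\Omega\gamma|\nabla v|^2\,\d x + k\int_\omega(1-v)^2\,\d s$ on $H^1_{\Gamma_D}(\Omega)$ is exactly \cref{def_chivarf}. Hence $\capar^N(\omega,k)=\min_{v\in H^1_{\Gamma_D}(\Omega)} J(v)$.

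For $C^N$: by \cref{eq.altCN} and \cref{eq.varfwint}, the same reasoning gives
$$C^N(\omega,k)=\min_{v\in H^1(\R^d\setminus\overline\omega)} \left\{\|v\|^2_{H^1(\R^d\setminus\overline\omega)} + k\int_\omega\big((1-v^-)^2+(1-v^+)^2\big)\,\d s\right\}.$$
Here I use that the outward normal convention makes $v^-$ the trace from inside $\Omega$.

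\emph{Step 2: $C^N \le C\,\capar^N$.}

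Fix once and for all a cut-off $\eta\in\calC_c^\infty(\R^d)$ with $\eta\equiv1$ on a neighbourhood of $K_N$ and $\eta\equiv 0$ near $\overline{\Gamma_D}$. Let $\Eext$ be a bounded extension operator $H^1(\Omega)\to H^1(\R^d)$. As competitor for $C^N$, take $v=\eta\,\Eext\chi$.
\begin{itemize}
\item This function belongs to $H^1(\R^d)$ and has equal one-sided traces $v^\pm=\chi$ on $\omega$, since $\eta=1$ there.
\item Its $H^1$ norm satisfies $\|v\|_{H^1}^2\le C\|\chi\|^2_{H^1(\Omega)}\le C\int_\Omega\gamma|\nabla\chi|^2\,\d x$, by the Poincar\'e inequality \cref{eq.Poincare}.
\end{itemize}
Therefore
$$C^N(\omega,k)\le C\int_\Omega\gamma|\nabla\chi|^2\,\d x+2k\int_\omega(1-\chi)^2\,\d s\le C\,\capar^N(\omega,k).$$
The constant depends only on $\Omega$, $\eta$ and $\gamma^\pm$, that is, on $K_N$ but not on $\omega$ or $k$.

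\emph{Step 3: $\capar^N \le C\,C^N$.}

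Conversely, take the restriction $v=\eta\, w|_\Omega$. It lies in $H^1_{\Gamma_D}(\Omega)$ because $\eta$ vanishes near $\Gamma_D$, and its trace on $\omega$ is $w^-$. Moreover $\int_\Omega\gamma|\nabla v|^2\,\d x\le C\|w\|^2_{H^1(\R^d\setminus\overline\omega)}$. Hence
$$\capar^N(\omega,k)\le C\|w\|_{H^1}^2 + k\int_\omega(1-w^-)^2\,\d s\le C\, C^N(\omega,k).$$

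\emph{Main point to check.} The only delicate steps are the minimum principles in Step 1, in particular that the minimizers are exactly $\chi$ and $w$ with minimal values given by \cref{eq.defcapar} and \cref{eq.altCN}, and the bookkeeping of the traces $w^\pm$ across $\omega$ versus across $\partial\Omega\setminus\overline\omega$. Once these are settled, all constants are independent of $\omega$ and $k$, because $\eta$ and $\Eext$ are fixed in terms of $\Omega$ and $K_N$ only.
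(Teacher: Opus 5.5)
Your proof is correct, and it takes a genuinely different route from the paper. You are also right that the hypothesis should read $\omega \subset K_N \subset \Gamma_N$.

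The paper argues by duality. It inserts the Robin condition of $\chi$ into the definition of $C^N(\omega,k)$, and that of $w$ into $\capar^N(\omega,k)$. It then turns the boundary integrals into volume integrals using Green's formula, a cut-off $\phi$ and the operators $\Eint$, $\Eext$. The cross terms are bounded by $C\,\capar^N(\omega,k)^{1/2}\,C^N(\omega,k)^{1/2}$ via \cref{eq.H1vphie} and \cref{eq.altCN}, and each direction ends by solving a quadratic inequality.

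You instead identify both quantities as minimal values of Robin-penalized energies:
$\capar^N(\omega,k)=\min_{v\in H^1_{\Gamma_D}(\Omega)}\{\int_\Omega\gamma|\nabla v|^2\,\d x+k\int_\omega(1-v)^2\,\d s\}$ and
$C^N(\omega,k)=\min_{v\in H^1(\R^d\setminus\overline\omega)}\{\|v\|^2_{H^1(\R^d\setminus\overline\omega)}+k\int_\omega((1-v^-)^2+(1-v^+)^2)\,\d s\}$.
Both identities check out: the Euler--Lagrange equations are \cref{def_chivarf} and \cref{eq.varfwint}, and the minimal values follow by testing these with $\chi$ and $w$. You then use each minimizer, restricted or extended and multiplied by a fixed cut-off, as a competitor in the other problem.

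What your route buys:
\begin{itemize}
\item It avoids normal derivatives, the jump $[\partial w/\partial n]$ across $\partial\Omega$, the two-sided extension bookkeeping and the quadratic inequalities.
\item It gives explicit constants.
\item The same minimum principle yields \cref{est_cRN_cap} at once (take $v=\chi^\infty$).
\item It also yields both monotonicity statements of \cref{prop.monotonecaparN}, since the penalized energy is pointwise nondecreasing in $k$ and in $\omega$.
\end{itemize}

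What the paper's route buys is uniformity with the Dirichlet case. Its Green's-formula computation follows the same template as the proof of \cref{prop.eqCD}. There $\capar^D$ and $C^D$ are maximal (complementary) energies rather than minimal ones, so one argument handles both versions. Your competitor idea would transfer to that case only in dual form, through the Rayleigh-type quotient $(\int_\omega v\,\d s)^2/(\int_\Omega\gamma|\nabla v|^2\,\d x+k\int_\omega v^2\,\d s)$.
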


%

\begin{proof}
The proof is quite similar to that of \cref{prop.eqCD} and for brevity, we only provide a sketch. We henceforth denote by $\chi \equiv \chi(\omega,k)$ the Robin-Neumann equilibrium distribution, solution to \cref{def_chi}. 
Again, throughout, $C>0$ is a constant that may change from one line to the other and that depends on $\Omega$, $\Gamma_N$, $\Gamma_D$, $\gamma$ and $K_N$, but that is in any case independent of $\omega$ and $k$. \par\medskip

Let us first prove the left-hand inequality in \cref{est_CN}. To achieve this, we insert the Robin boundary condition satisfied by $\chi$ on $\omega$ into the definition of $C^5\omega,k)$:
$$
\begin{array}{>{\displaystyle}cc>{\displaystyle}l}
C^N(\omega,k) &=& k \int_\omega \Big((1-w^-) + (1 - w^+) \Big) \:\d s \\[1em]
&=&  \int_\omega \left(\gamma \frac{\partial \chi}{\partial n}  + k \chi \right) \Big((1-w^-) + (1 - w^+) \Big) \:\d s \\[1em]
&=& 2 \int_\omega \gamma \frac{\partial \chi}{\partial n} \:\d s -  \int_\omega \gamma \frac{\partial \chi}{\partial n}(w^+ + w^-) \:\d s + \int_\omega k\chi  \Big((1-w^-) + (1 - w^+) \Big)  \:\d s\\[1em]
&=& 2 \int_\omega \gamma \frac{\partial \chi}{\partial n} \:\d s -  \int_\omega \gamma \frac{\partial \chi}{\partial n}(w^+ + w^-) \:\d s - \int_\omega \left[\frac{\partial w}{\partial n} \right] \chi \:\d s .
\end{array}$$
Introducing a smooth cut-off function $\phi$ which equals $1$ on the compact set $K_N$ and $0$ on $\Gamma_D$, and using the definition \cref{eq.defcapar} of $\capar^N(\omega,k)$, we obtain:
\begin{equation}\label{eq.CNint}
C^N(\omega,k)  = 2\capar^N(\omega,k) -  \int_{\partial \Omega} \gamma \frac{\partial \chi}{\partial n} \phi(w^+ + w^-) \:\d s- \int_{\partial\Omega} \left[\frac{\partial w}{\partial n} \right] \chi \phi \:\d s .
\end{equation}
Let us now recall the definition \cref{eq.defEext} of the extension operator $\Eint: H^1(\R^d \setminus \overline{\Omega}) \to H^1(\Omega)$ of a function defined outside $\Omega$ to a function defined inside $\Omega$. Likewise, introducing a fixed bounded domain $\calO\subset \R^d$ such that $\Omega \Subset \calO$, we define the operator $\Eext: H^1(\Omega) \to  H^1(\R^d \setminus \overline{\Omega})$ which to 
$v \in H^1(\Omega)$ associates the unique solution $\Eext v \in H^1_0(\calO)$ to the boundary-value problem:
\begin{equation}\label{eq.defEext}
 \left\{\begin{array}{cccl}
-\textrm{div}(\gamma \nabla \Eext v) &=& 0 &\textrm{in}\; \calO \setminus \overline{\Omega},\\ 
\Eext v &=& v & \textrm{on}\; \partial \Omega \\
\Eext v &=& 0& \text{on } \partial \calO. 
\end{array}\right.
\end{equation}
Again, it is immediate to see that there exists a constant $C >0$ such that: 
\begin{eqnarray*}
\forall v \in H^1(\Omega), \quad
||\Eext v||_{H^1(\R^d \setminus \overline\Omega)} &\leq& C ||v||_{H^1(\Omega)}.
\end{eqnarray*}
We now rewrite \cref{eq.CNint} as:
$$C^N(\omega,k)  = 2\capar^N(\omega,k) -  \int_{\partial \Omega} \gamma \frac{\partial \chi}{\partial n} \phi(\Eint w^+ + w^-) \:\d s- \int_{\partial\Omega}  \gamma \frac{\partial w^+}{\partial n} \Eext (\chi \phi) \:\d s +\int_{\partial\Omega}  \gamma \frac{\partial w^-}{\partial n}  \chi \phi \:\d s . $$
An application of the Green's formula now yields:
$$C^N(\omega,k)  = 2\capar^N(\omega,k) -  \int_{\Omega} \gamma  \nabla \chi \cdot \nabla (\phi(\Eint w^+ + w^-)) \:\d x + \int_{\R^d \setminus \overline\Omega}  \nabla w \cdot \nabla(\Eext (\chi \phi)) \:\d x +  \int_{\Omega}  \nabla w \cdot \nabla(\chi \phi) \:\d x.$$
Now applying the Cauchy-Schwarz inequality, we arrive at:
$$\begin{array}{>{\displaystyle}cc>{\displaystyle}l}
C^N(\omega,k) & \leq&  2\capar^N(\omega,k) + C \lvert\lvert \chi \lvert\lvert_{H^1(\Omega)} \lvert\lvert w \lvert\lvert_{H^1(\R^d \setminus \overline\omega)} \\[1em]
& \leq&  2\capar^N(\omega,k) + C\capar^N(\omega,k) ^{\frac12} C^N(\omega,k)^{\frac12},
\end{array}
$$
where the second line follows from \cref{eq.H1vphie,eq.altCN}.
Solving this quadratic inequality for $C^N(\omega,k)^{1/2} $, we see that there exists a constant $C >0$ such that: 
$$ C^N(\omega,k)  \: \leq \: C \capar^N(\omega,k), $$
which is the desired result. \par\medskip

Let us now turn to the right-hand inequality in \cref{est_CN}. Like in the previous argument, we start with the definition \cref{eq.defcapar} of the Robin-Neumann capacity $\capar^N(\omega,k)$, in which we bring into play the function $w$ in \cref{def_zw} via its boundary condition on $\omega$:
$$
\begin{array}{>{\displaystyle}cc>{\displaystyle}l}
 \capar^N(\omega,k) &=& k \int_\omega(1-\chi) \:\d s \\[1em]
 &=&  -\frac12 \int_\omega \left[\frac{\partial w}{\partial n}\right] (1-\chi) \:\d s  + \frac12 \int_{\omega} k(w^+ + w^-) (1-\chi) \:\d s \\[1em]
  &=&  \frac12 C^N(\omega,k) +\frac12 \int_\omega \left[\frac{\partial w}{\partial n}\right] \chi  \:\d s  + \frac12 \int_{\omega} \gamma \frac{\partial \chi}{\partial n}(w^+ + w^-) \:\d s,
 \end{array}
 $$
 where we have used the definition \cref{eq.defCNomk} of $C^N(\omega,k)$ to obtain the last line. 
 Now introducing a smooth cut-off function $\phi$ which equals $1$ on $\Gamma_N$ and $0$ on $\Gamma_D$, and using again the extension operators $\Eint : H^1(\R^d \setminus \overline\Omega) \to H^1(\Omega)$, 
 $\Eext : H^1(\Omega) \to  H^1(\R^d \setminus \overline\Omega) $, we obtain:
 $$
\begin{array}{>{\displaystyle}cc>{\displaystyle}l}
 \capar^N(\omega,k) &=&  \frac12 C^N(\omega,k) +\frac12 \int_{\partial \Omega} \left[\frac{\partial w}{\partial n}\right]  \phi\chi  \:\d s  + \frac12 \int_{\partial\Omega} \gamma \frac{\partial \chi}{\partial n}(w^+ + w^-) \phi \:\d s \\[1em]
 &=&  \frac12 C^N(\omega,k) +\frac12 \int_{\partial \Omega}  \frac{\partial w^+}{\partial n}  \phi \Eext \chi  \:\d s   - \frac12 \int_{\partial \Omega}  \frac{\partial w^-}{\partial n}  \phi  \chi  \:\d s  + \frac12 \int_{\partial\Omega} \gamma \frac{\partial \chi}{\partial n}(\Eint w^+ + w^-) \phi \:\d s.
 \end{array}
 $$
 Now applying the Green's formula and the Cauchy-Schwarz inequality, we obtain:
 $$  \capar^N(\omega,k) \leq \frac12 C^N(\omega,k)  + C \lvert\lvert w \lvert\lvert_{H^1(\R^d \setminus \overline\omega)} \lvert\lvert \chi \lvert\lvert_{H^1(\Omega)},$$
 and so, considering again \cref{eq.H1vphie,eq.altCN}: 
 $$  \capar^N(\omega,k) \leq \frac12 C^N(\omega,k)  + C \capar^N(\omega,k)^{\frac12} C^N(\omega,k) ^{\frac12}.$$
 Solving this quadratic inequality for $ \capar^N(\omega,k) ^{1/2} $, we see that there exists a constant $C >0$ such that: 
$$   \capar^N(\omega,k) \leq C C^N(\omega,k) ,  $$
as expected. \par\medskip
 \end{proof}

\section{Conclusion and perspectives}

\noindent In this article, we have investigated the asymptotic effect of a particular class of perturbations of the boundary conditions attached to the conductivity equation for the voltage potential: its Neumann or Dirichlet conditions are replaced with a Robin boundary condition with arbitrary admittance parameter $k_\e$ on a subset $\omega_\e$ of the relevant region of the physical domain which vanishes at the limit $\e \to 0$.
In both situations, we have derived a general asymptotic formula for the voltage potential $u_\e$ resulting from such perturbations: the amplitude of the perturbation is quantified by a new quantity depending on $\omega_\e$ and $k_\e$ -- the Robin-Neumann and Robin-Dirichlet capacity, respectively.
We have proved that, in the regimes where $k_\e$ is either ``very small'' or ``very large'', these quantities are consistent with the asymptotic rates obtained in our previous work, where a Neumann boundary condition of the conductivity equation was replaced with a Dirichlet condition on a small subset of $\partial \Omega$, and vice-versa. 
An interesting perspective of this work consists in deriving explicit versions of these asymptotic expansions in the particular physical configuration where the vanishing support $\omega_\e$ of the Robin boundary condition is a curvilinear segment of length $2\e$ in 2d, or a surface disk of radius $\e$ in 3d. Of particular interest are asymptotic formulas that would hold true uniformly with respect to the admittance parameter $k_\e$: these would allow to appraise how the asymptotic rates of convergence of the voltage potential to its background counterpart transition from one extreme behavior (of the order $k_\e \lvert \omega_\e\lvert$ when $k_\e \ll 1$ in the case where the perturbation affects the Neumann boundary) to the other (of the order $\capa(\omega_\e)$ when $k_\e \gg 1$ in the same configuration).

\par\bigskip
\noindent \textbf{Acknowledgements.} 
The work of E.B. and C.D. is partially supported by the project ANR-24-CE40-2216 STOIQUES.
Additionally, C.D. is partially supported by the project ANR-22-CE46-0006 StableProxies.
Part of this article was realized while C.D. was visiting the Department of Mathematics of the University of Trento, whose hospitality is gratefully acknowledged.


\appendix
\section{Control of the extension of a distribution in $H^{-s}(\omega), 0 < s < 1$}\label{sec.app}

\noindent This appendix provides a proof of the fact \cref{est_10b} used during the proof of Theorem~\ref{thm5},
whereby the function $ \frac{\overline{\chi_\e^\infty}}{\partial n}$ lies 
in~$\widetilde{H}^{-s}(\omega_1)$ with an estimate of its norm.
Interestingly, the proof indicates how the estimate may depend on the size of $\omega$. 
Although this degree of precision was not needed in the proof of Theorem~\ref{thm5}, we believe that the result and its proof are interesting in their own right.

\medskip

In this section, we assume that $\Omega$ is a smooth bounded open set in $\R^d, d = 2,3$.
Its boundary is made of two disjoint open Dirichlet and Neumann regions $\Gamma_D$ and $\Gamma_N$ with non-empty interiors:
$$
\partial \Omega \:\: = \:\: \overline{\Gamma_D} \cup \overline{\Gamma_N}, \text{ where } \Gamma_D \cap \Gamma_N = \emptyset.
$$
For simplicity, we assume that $\Gamma_N$ contains the origin $0$ and that it is completely flat in a fixed neighborhood of $0$. The general situation follows from application of a smooth diffeomorphism to this flat configuration.
\medskip

\begin{lemma} \label{lem_inject_tilde}

Let $\omega \Subset \Gamma_N \subset \partial \Omega$ be either a segment of length $2r$ around $0$ if $d=2$, 
or a flat disk with center $0$ and radius $r$ if $d =3$, such that
\begin{equation} \label{assump_lem}
\text{\rm dist}(\omega,\Gamma_D) \:\: \geq \:\: C r,
\end{equation}
for a constant $C >0$ which does not depend on $r$. 
Let $0 < s < 1$, and let $g$ be an element in $H^{-s}(\partial \Omega)$ such that $g = 0$ on 
$\Gamma_N \setminus \overline\omega$.
Then $g$ belongs to $\widetilde{H}^{-s}(\omega)$ and there exists a constant $C >0$ independent of $r$ such that
\begin{equation} \label{est_lem}
||g||_{\widetilde{H}^{-s}(\omega)} 
\:\: \leq \:\: 
\ds\frac{C}{r^s} \, ||g||_{H^{-s}(\partial \Omega)}.
\end{equation}
\end{lemma}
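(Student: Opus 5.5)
The plan is to argue by duality. Since $\widetilde{H}^{-s}(\omega)$ is the dual of $H^{s}(\omega)$, the quantity to estimate is
$$\|g\|_{\widetilde H^{-s}(\omega)} = \sup\big\{ \langle g, V\rangle : v \in H^s(\omega),\ \|v\|_{H^s(\omega)} \le 1\big\},$$
where $V \in H^s(\partial\Omega)$ is any extension of $v$. The work therefore consists in building, for each $v \in H^s(\omega)$, a good extension $V$ and checking two things: first, that $\langle g, V\rangle$ does not depend on the choice of extension; second, that $\|V\|_{H^s(\partial\Omega)} \le C r^{-s}\|v\|_{H^s(\omega)}$. Together these give $|\langle g,V\rangle| \le \|g\|_{H^{-s}(\partial\Omega)}\, C r^{-s}\|v\|_{H^s(\omega)}$, which is \cref{est_lem}.

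\emph{Construction of the extension.} I will work in the flat neighborhood of $0$ and set $\omega = r\,\omega_1$, where $\omega_1$ is the unit segment or disk. Let $\hat v(y) = v(ry)$ on $\omega_1$. Scaling the Gagliardo seminorm gives $|\hat v|_{H^s(\omega_1)} = r^{s-(d-1)/2}|v|_{H^s(\omega)}$ and $\|\hat v\|_{L^2(\omega_1)} = r^{-(d-1)/2}\|v\|_{L^2(\omega)}$.

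Next, fix once and for all a bounded extension operator $H^s(\omega_1)\to H^s(\R^{d-1})$ whose image is supported in $B_2$; reflection across the boundary followed by a fixed cutoff does this. Apply it to $\hat v$ and scale back to obtain $V$, supported in $B_{2r}\cap\partial\Omega$. By \cref{assump_lem}, possibly after shrinking the factor $2$ to some $1+c$ with $c$ depending on the constant in \cref{assump_lem}, this support lies inside $\Gamma_N$. Scaling again,
$$\|V\|_{H^s}^2 \lesssim r^{d-1}\big(r^{-2s}|\hat v|_{H^s(\omega_1)}^2 + \|\hat v\|_{L^2(\omega_1)}^2\big)\cdot(\text{const}) \lesssim |v|^2_{H^s(\omega)} + r^{-2s}\|v\|^2_{L^2(\omega)}.$$
The $L^2$ part of the extended function is also bounded by $\|v\|_{L^2(\omega)}$. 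Since $r\le r_0$, this is $\le C r^{-2s}\|v\|^2_{H^s(\omega)}$.

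The loss of $r^{-s}$ comes exactly from the lower-order term of the fixed extension operator. That term is unavoidable; constants are the example. This is the main obstacle, namely tracking carefully the relative scaling of the seminorm and the $L^2$ part, and it is where the factor $r^{-s}$ originates.

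\emph{Independence of the extension.} If $V_1,V_2$ both extend $v$, then $V_1-V_2$ vanishes on $\omega$. I will reduce the pairing to extensions supported in $\Gamma_N$ by working throughout with the constructed $V$. The hypothesis $g=0$ on $\Gamma_N\setminus\overline\omega$ should then give $\langle g,W\rangle=0$ for $W\in H^s(\partial\Omega)$ supported in $\Gamma_N$ with $W=0$ on $\omega$. The point requiring care is that such $W$ lies in $\widetilde H^s(\Gamma_N\setminus\overline\omega)$, i.e. can be approximated by smooth functions compactly supported in $\Gamma_N\setminus\overline\omega$. This holds for Lipschitz $\omega$ by the standard density result in \cite{mclean2000strongly}, and then the distributional vanishing of $g$ applies.

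Consequently $g$ defines a continuous functional on $H^s(\omega)$ via $v\mapsto\langle g,V\rangle$, i.e. $g\in\widetilde H^{-s}(\omega)$, with the norm bound \cref{est_lem}.
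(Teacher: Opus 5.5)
Your proof is correct and has the same skeleton as the paper's. Both argue by duality through a linear extension operator from $H^s(\omega)$ into functions supported in $\Gamma_N$, with operator norm at most $Cr^{-s}$. In both, this support condition is the only place where \cref{assump_lem} enters.

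The difference is how the extension bound is proved. The paper writes down an explicit reflection operator $R$ with a cut-off $\rho$, $|\rho'|\leq 4/r$, and estimates the Gagliardo double integral piece by piece (the terms $T_1,T_2,T_3$ and $S_1,S_2,S_3$). It does this only for $d=2$. You instead rescale to the reference set $\omega_1$, apply one fixed extension operator there, and read off the powers of $r$. Your route is shorter and treats the disk in $d=3$ at no extra cost. It also shows clearly that the factor $r^{-s}$ comes from the $L^2$ part of the norm scaling differently from the seminorm. The explicit construction, in exchange, gives a concrete operator.

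You also justify a point the paper takes for granted when it writes $\langle g,v\rangle=\langle g,Rv\rangle$: that the pairing does not depend on the extension. Your density argument in $\widetilde H^s(\Gamma_N\setminus\overline\omega)$ is the right one. Restricting to extensions supported in $\Gamma_N$ is necessary, not merely convenient, because nothing is assumed about $g$ on $\Gamma_D$. So your opening ``any extension'' must be read as ``any extension supported in $\Gamma_N$''; the lemma really concerns $g$ restricted to $\Gamma_N$, which is also the paper's implicit reading.

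There are two small slips in the scaling step.

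\begin{itemize}
\item A fixed extension operator $E$ controls the seminorm of $E\hat v$ only through the full norm $\|\hat v\|_{H^s(\omega_1)}$. The correct intermediate bound is therefore $\|V\|^2_{H^s(\R^{d-1})}\leq C\,(r^{d-1}+r^{d-1-2s})\,\|\hat v\|^2_{H^s(\omega_1)}\leq C\,r^{d-1-2s}\|\hat v\|^2_{H^s(\omega_1)}$. The expression you wrote, $r^{d-1}\big(r^{-2s}|\hat v|^2_{H^s(\omega_1)}+\|\hat v\|^2_{L^2(\omega_1)}\big)$, unscales to $|v|^2_{H^s(\omega)}+\|v\|^2_{L^2(\omega)}$, i.e.\ it would claim no loss at all. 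Your next bound, $|v|^2_{H^s(\omega)}+r^{-2s}\|v\|^2_{L^2(\omega)}$, is the correct one.
\item Your claim that the loss is unavoidable is too strong. Constants show that $r^{-s}$ is sharp only when $d-1>2s$. For $d=2$ and $s>1/2$, a fixed bump extends constants with a loss of only $O(r^{-1/2})$. This does not affect the proof, since the lemma only asserts an upper bound.
\end{itemize}
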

\medskip

\begin{proof}

\noindent For simplicity of the exposition, we only provide the proof when the space dimension equals $d=2$:
$\omega \subset \Gamma_N$ is then the straight segment of length $2r$ around $0$, while $\Gamma_N$ contains the segment of length $2$ around $0$: 
$$
\omega = (-r,r) \times \{0\} \;\subset\;  (-1,1) \times \{0\} \:\subset\: \Gamma_N.
$$ 
With a small abuse of notation, we identify a point $(x,0) \in \omega$ with its horizontal coordinate $x \in (-r,r)$,
and denote by $C$ a positive constant that does not depend on $r$. 
\medskip
Note that, for any Lipschitz subset $\sigma \subset \partial \Omega$, an element $g \in H^{-s}(\partial \Omega)$ naturally induces an element in $H^{-s}(\sigma)$, still denoted by $g$, with
\begin{eqnarray*}
\left\langle g,v\right\rangle_{H^{-s}(\sigma), \widetilde{H}^{s}(\sigma)}
&=&
\left\langle g,\widetilde{v} \right\rangle_{H^{-s}(\partial \Omega), H^{s}(\partial \Omega)},
\end{eqnarray*}
where $\widetilde{v}$ denotes the extension by $0$ of a function $v \in \widetilde{H}^{s}(\sigma)$.
The argument proceeds in two steps.

\medskip

\noindent\textit{Step 1: We construct a suitable extension operator 
from $H^s(\omega)$ to $\widetilde{H}^s(\Gamma_N)$.} 

To achieve this, let $\rho~: \R^+ \rightarrow [0,1]$ denote a smooth function such that
\begin{eqnarray*}
\rho(x) &=& \left\{ \begin{array}{cl}
1 & \textrm{if} \quad 0 \leq x \leq r/2,
\\
0 & \textrm{if} \quad x \geq r,
\end{array}\right.
\end{eqnarray*}
together with the estimate:
\begin{equation} \label{est_rhoep}
\forall x \geq 0, \quad   \lvert \rho^\prime(x) \lvert \:\: \leq\:\: \ds\frac{4}{r}.
\end{equation}
Let $0 < s < 1$. For any function $v \in {\mathcal C}^\infty(\overline{\omega})$, we define:
\begin{eqnarray*}
R v(x) &=& \left\{ \begin{array}{cl}
v(x) & \textrm{if}\quad 0 < x < r,
\\[5pt]
\rho(-x)v(-x) &\textrm{if} \quad x \in \Gamma_N,\quad -r< x < 0,
\\[5pt]
\rho_(x-r)v(2r -x) &\textrm{if} \quad x \in \Gamma_N,\quad r< x < 2r,
\\[5pt]
0 &\textrm{otherwise,}
\end{array} \right.
\end{eqnarray*}
Note that $Rv$ is supported in $\Gamma_N$, due to \cref{assump_lem}, see \cref{fig.graphRe} for an illustration. 

\begin{figure}[!ht]
\centering
\includegraphics[width=0.8\textwidth]{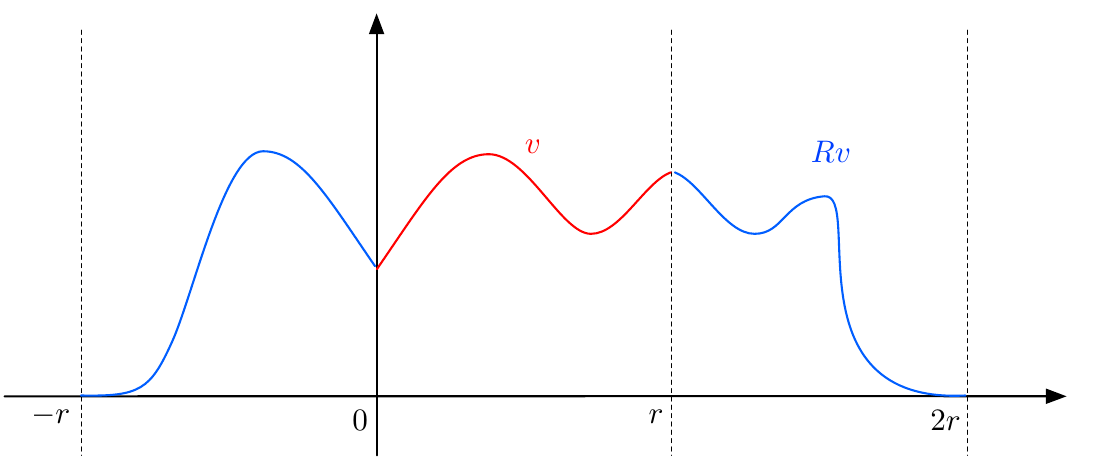}
\caption{\it Illustration of the extension operator $R$ defined in \cref{sec.app}.}
\label{fig.graphRe} 
\end{figure}
\medskip

Recalling the definitions of fractional Sobolev norms on a subset of the boundary of a domain from \cref{sec.Hs}, for an arbitrary $v \in \calC^\infty(\overline\omega)$, we next estimate the norm $ \lvert\lvert R v \lvert\lvert_{\widetilde{H}^s(\Gamma_N)}$ 
in terms of~$ \lvert\lvert v\lvert\lvert_{H^s(\omega)}$:
\begin{eqnarray*}
||v||^2_{H^s(\omega)} &=&
\ds\int_{\omega} |v(x)|^2 \,\d s(x) \;+\;
\ds\int_{\omega}\ds\int_{\omega} \ds\frac{|v(x) - v(y)|^2}{|x-y|^{1 + 2s}} \,\d s(x)\d s(y),
\end{eqnarray*}
and, in view of the definition of $R v$,
\begin{eqnarray} \label{normRv}
||R v||^2_{\widetilde{H}^s(\Gamma_N)} &=&
\ds\int_{\partial \Omega} R v^2 \,\d s(x) \; +\;
\ds\int_{\partial \Omega}\ds\int_{\partial \Omega} 
\ds\frac{|R v(x) - R v(y)|^2}{|x-y|^{1 + 2s}} \,\d s(x)\d s(y)
\nonumber \\
&=&
\ds\int_{\Gamma_N} R v^2 \,\d s(x) \; +\;
\ds\int_{\Gamma_N}\ds\int_{\Gamma_N} 
\ds\frac{|R v(x) - R v(y)|^2}{|x-y|^{1 + 2s}} \,\d s(x)\d s(y).
\end{eqnarray}
\medskip

Let us first estimate
\begin{equation} \label{est_11}
\begin{array}{>{\displaystyle}cc>{\displaystyle}l}
\ds\int_{\Gamma_N} R v(x)^2 \,\d s(x)
&=&
\ds\int_{-r}^0 R v(x)^2 \,\d s(x) 
\;+\; \ds\int_{0}^r R v(x)^2 \,\d s(x) 
\;+\; \ds\int_{r}^{2r} R v(x)^2 \,\d s(x)  \\[1em]
&\leq&
2 \ds\int_0^r \rho(x)^2 v(x)^2 \,\d s(x) + \ds\int_0^r v(x)^2 \:\d s(x) \\[1em]
&\leq& 3 ||v||_{L^2(\omega)}^2,
\end{array}
\end{equation}
where we have used the definition of $R v(x)$ and the fact that $0 \leq \rho(x) \leq 1$.
\medskip

Let us now turn to estimate the $\widetilde H^{s}(\Gamma_N)$ semi-norm of $Rv$; it holds:
\begin{equation}\label{eq.T1T2T3}
\begin{array}{>{\displaystyle}cc>{\displaystyle}l}
\ds\int_{\Gamma_N}\ds\int_{\Gamma_N}  \ds\frac{|R v(x) - R v(y)|^2}{|x-y|^{1 + 2s}} \,\d s(x)\d s(y)
&=&
\ds\int_{\omega}\ds\int_{\omega} 
\ds\frac{|R v(x) - R v(y)|^2}{|x-y|^{1 + 2s}} \,\d s(x)\d s(y) 
\\[1em]
&&
+\; \ds\int_{\Gamma_N \setminus \overline{\omega}}
\ds\int_{\Gamma_N \setminus \overline{\omega}} 
\ds\frac{|R v(x) - R v(y)|^2}{|x-y|^{1 + 2s}} \,\d s(x)\d s(y)
\\[1em] 
&& +\;
2 \, \ds\int_{\Gamma_N \setminus \overline{\omega}}
\ds\int_{\omega} \ds\frac{|R v(x) - R v(y)|^2}{|x-y|^{1 + 2s}} \,\d s(x)\d s(y)
\\[2em]
&=:& T_1 + T_2 + 2 T_3,
\end{array}
\end{equation}
with obvious notations. 

We first estimate the term $T_1$. By construction, $R v(x) = v(x)$ for $x \in \omega$, and so:
\begin{equation} \label{est_12a}
T_1 = \ds\int_{\omega}\ds\int_{\omega} 
\ds\frac{|v(x) - v(y)|^2}{|x-y|^{1 + 2s}} \,\d s(x)\d s(y)  = \lvert v \lvert_{H^s(\omega)}^2.
\end{equation}

We next turn to $T_2$. According to our simplifying assumptions regarding 
the geometry of $\omega$, we have:
\begin{multline*}
T_2 =
\ds\int_{y= -r}^0 \ds\int_{x=-r}^0 
\ds\frac{|\rho(-x) v(-x) - \rho(-y)v(-y)|^2}
{|x-y|^{1 + 2s}} \,\d s(x)\d s(y) 
\\ 
+ \ds\int_{y=r}^{2r} \ds\int_{x=r}^{2r} 
\ds\frac{|\rho(x- 2r) v(2r-x) - \rho(y-2r)v(2r - y)|^2}
{|x-y|^{1 + 2s}} \,\d s(x)\d s(y) 
\\
 + 2 \ds\int_{y=r}^{2r} \ds\int_{x=-r}^{0} 
\ds\frac{|\rho(-x) v(-x) - \rho(y-2r)v(2r - y)|^2}
{|x-y|^{1 + 2s}} \,\d s(x)\d s(y) .
\end{multline*}
We now denote by $S_1$, $S_2$ and $S_3$ the three integrals in the above right-hand side. 
The first term $S_1$ can be estimated as follows:
\begin{equation}\label{eq.estS1}
\begin{array}{>{\displaystyle}cc>{\displaystyle}l}
S_1 &\leq & 
C \ds\int_{y=-r}^0 \ds\int_{x=-r}^0 
\ds\frac{\rho(-x)^2|v(-x) - v(-y)|^2}
{|x-y|^{1 + 2s}} \,\d s(x)\d s(y)  
\\
&& \hspace{5cm} + \:\: C\ds\int_{y= -r}^0 \ds\int_{x=-r}^0 \
\ds\frac{|\rho(-x)  - \rho(-y)|^2 v(-y)^2} {|x-y|^{1 + 2s}} \,\d s(x)\d s(y) 
\\[1em]
&\leq&
C \ds\int_{\omega}\ds\int_{\omega} \ds\frac{|v(x) - v(y)|^2}{|x-y|^{1 + 2s}} \,\d s(x)\d s(y) 
\;+\;  
C \lvert\lvert \rho^\prime\lvert\lvert_{L^\infty(\R_+)}^2
\ds\int_{\omega}\ds\int_{\omega} {|x-y|^{1-2s}}|v(y)|^2 \,\d x\d y 
\\[1em]
&\leq&
C \ds\int_{\omega}\ds\int_{\omega} \ds\frac{|v(x) - v(y)|^2}{|x-y|^{1 + 2s}} \,\d s(x)\d s(y) 
\;+\; 
C ||\rho^\prime||_{L^\infty(\R_+)}^2
\ds\int_{\omega}\ds\int_{\omega} (2 r)^{1-2s} |v(y)|^2 \,\d s(x)\d s(y) 
\\[1em]
&\leq&
C \ds\int_{\omega}\ds\int_{\omega} \ds\frac{|v(x) - v(y)|^2}{|x-y|^{1 + 2s}} \,\d s(x)\d s(y)
\;+\;
C \, r^{-2s} \,\ds\int_{\omega} v(y)^2 \, \d s(y) 
\\[1.5em]
&\leq& C\, r^{-2s} \, ||v||_{H^s(\omega)}^2,
\end{array}
\end{equation}
where the penultimate line follows from \cref{est_rhoep}.
The same argument applies to $S_2$ and shows that: 
\begin{equation}\label{eq.estS2}
 S_2 \leq Cr^{-2s} ||v||_{H^s(\omega)}^2 .
 \end{equation}
As far as $S_3$ is concerned, we have: 
\begin{equation}\label{eq.estS3}
\begin{array}{>{\displaystyle}cc>{\displaystyle}l}
S_3 
&\leq & 
\frac{C}{r^{1+2s}} \ds\int_{y=r}^{2r} \ds\int_{x=-r}^{0} 
\Big(|\rho(-x) v(-x) - \rho(y-2r)v(2r - y)|^2 \Big)\,\d s(x)\d s(y) 
\\[1em]
&\leq & 
\frac{C}{r^{2s}}  \ds\int_{x=0}^{r} 
\lvert v(x) \lvert^2 \,\d s(x)\d s(y) 
\\[1em]
&=& 
C\, r^{-2s}\, \lvert\lvert v \lvert\lvert^2_{L^2(\omega)}.
\end{array}
\end{equation}
Combining the estimates \cref{eq.estS1,eq.estS2,eq.estS3}, we obtain: 
\begin{eqnarray} \label{est_12b}
T_2 &\leq& C\, r^{-2s} \, ||v||_{H^s(\omega)}^2.
\end{eqnarray} 

Finally, let us decompose the last term $T_3$ in \cref{eq.T1T2T3} as follows:
\begin{equation}\label{eq.T3}
\begin{array}{>{\displaystyle}cc>{\displaystyle}l}
T_3 &=&
\ds\int_{\Gamma_N \setminus \overline{\omega}}\ds\int_{\omega} 
\ds\frac{|R v(x) - R v(y)|^2}{|x-y|^{1 + 2s}} \,\d s(x)\d s(y),
\\[1em]
&=:& R_1 + R_2,
\end{array}
\end{equation}
where
$$
R_1 = \ds\int_{y = -r}^{0} \ds\int_0^r \ds\frac{|v(x) - R v(y)|^2}{|x-y|^{1 + 2s}} \,\d s(x)\d s(y)
\text{ and } 
R_2 =  \ds\int_{y = r}^{2r} \ds\int_0^r \ds\frac{|v(x) - R v(y)|^2}{|x-y|^{1 + 2s}} \,\d s(x)\d s(y).
$$ 
In view of the definition of $\rho$, and since $|x-y| > r/2$ when
$-r < y < -r/2 < 0 < x$, we see that
\begin{eqnarray*}
R_1 &=&
\ds\int_{y = -r}^{-r/2} \ds\int_0^r
\ds\frac{|v(x) - R v(y)|^2}{|x-y|^{1 + 2s}} \,\d s(x)\d s(y)
\;+\;
\ds\int_{y = -r/2}^{0} \ds\int_0^r
\ds\frac{|v(x) - \rho(-y)v(-y)|^2}{|x-y|^{1 + 2s}} \,\d s(x)\d s(y)
\\
&\leq&
C \ds\int_{y = -r}^{-r/2} \ds\int_0^r
\ds\frac{|v(x) - R v(y)|^2}{r^{1 + 2s}} \,\d s(x)\d s(y)
\;+\;
\ds\int_{y = -r/2}^{0} \ds\int_0^r
\ds\frac{|v(x) - v(-y)|^2}{|x-y|^{1 + 2s}} \,\d s(x)\d s(y),
\end{eqnarray*}
where we have used the definition of $\rho$ to obtain the last line. 
Rearranging this expression, we obtain:
\begin{eqnarray*}
R_1 &\leq&
C \ds\int_{y = -r}^{-r/2} \ds\int_0^r
\ds\frac{1}{r^{1 + 2s}} \big( \lvert v(x) \lvert ^2 + \lvert v(-y) \lvert ^2\big) \,\d s(x)\d s(y)
\;+\;
\ds\int_{y = -r/2}^{0} \ds\int_0^r
\ds\frac{|v(x) - v(-y)|^2}{|x-y|^{1 + 2s}} \,\d s(x)\d s(y)
\\
&\leq&
C r^{-2s} \, ||v||_{L^2(\omega)}^2 + 
\ds\int_{\omega}\ds\int_{\omega}
\ds\frac{|v(x) - v(y)|^2}{|x-y|^{1 + 2s}} \,\d s(x)\d s(y),
\\
\end{eqnarray*}
where we have used the fact that $|x+y| < |x-y|$ 
when $-r/2 < y < 0 < x$ to pass from the first line to the second one.
 
The second term $R_2$ in the expression~\cref{eq.T3} of $T_3$ 
can be estimated in the same manner, which finally yields:
\begin{equation} \label{est_12c}
T_3  \:\: \leq \:\: 
C\, r^{-2s} \, ||v||_{H^s(\omega)}^2.
\end{equation} 
Gathering~\cref{est_11}-\cref{est_12c} and noticing that $2r = |\omega|$, we have eventually proved that:
\begin{equation} \label{est_13}
\forall v \in \calC^\infty(\overline\omega), \quad ||R v||_{\widetilde{H}^s(\Gamma_N)}
\:\: \leq  \:\: C \, |\omega|^{-s} \, ||v||_{H^s(\omega)}.
\end{equation}
By density of $\calC^\infty(\overline\omega)$ in $H^s(\omega)$, this inequality allows to extend $R$ as a bounded linear mapping from $H^s(\omega)$ to $\widetilde H^s(\Gamma_N)$, which still satisfies the estimate \cref{est_13}. 
\par\medskip

\noindent \textit{Step 2: We estimate the $\widetilde{H}^{-s}(\omega)$ 
norm of $g$ by duality.} 

Let $0 < s < 1$, and let $g$ denote a function in $H^{-s}(\partial \Omega)$ such that
$$
g = 0 \quad \textrm{on}\; \Gamma_N \setminus \overline{\omega}.
$$
Furthermore, let $v \neq 0$ be a given function in $H^s(\omega)$. Using the
operator $R$ constructed in step 1, we may extend $v$ to a function $Rv$ defined on
the whole of $\partial \Omega$ such that $Rv = 0$ on $\Gamma_D$. It follows that:
\begin{equation*}
\begin{array}{>{\displaystyle}cc>{\displaystyle}l}
\left\langle g,v \right\rangle_{\widetilde H^{-s}(\omega), H^s(\omega)}
&=& \left\langle g, R v\right\rangle_{H^{-s}(\Gamma_N),\widetilde{H}^s(\Gamma_N)}
\\[1em]
&=& ||R v||_{\widetilde{H}^s(\Gamma_N)}
\left\langle g, \ds\frac{R v}{||R v||_{\widetilde{H}^s(\Gamma_N)}}
\right\rangle_{H^{-s}(\Gamma_N),\widetilde{H}^s(\Gamma_N)}
\\[2em]
&\leq & 
||R||_{{\mathcal L}(H^s(\omega),\widetilde{H}^s(\Gamma_N))}\, ||v||_{H^s(\omega)} 
\, \langle g,w \rangle_{H^{-s}(\Gamma_N),\widetilde{H}^s(\Gamma_N)},
\end{array}
\end{equation*}
where $w := \ds\frac{R v}{||R v||_{\widetilde{H}^s(\Gamma_N)}}$.
Taking the supremum with respect to $w \in \widetilde{H}^s(\Gamma_N)$,
then with respect to $v~\in~H^s(\omega)$,
we obtain 
$$
\begin{array}{>{\displaystyle}cc>{\displaystyle}l}
\sup_{ v \in H^s(\omega),\atop
||v||_{H^s(\omega)} = 1}
\left\langle g,v \right\rangle_{H^{-s}(\partial \Omega), H^s(\partial \Omega)}
&\leq &
||R||_{{\mathcal L}(H^s(\omega),\widetilde{H}^s(\Gamma_N))}
\sup_{ 
w \in \widetilde{H}^s(\Gamma_N) ,\atop 
||w||_{\widetilde{H}^s(\Gamma_N)} = 1 }
\langle g,w \rangle_{H^{-s}(\Gamma_N),\widetilde{H}^s(\Gamma_N)} \\[1em]
&\leq &
||R||_{{\mathcal L}(H^s(\omega),\widetilde{H}^s(\Gamma_N))}
\sup_{ 
w \in \widetilde{H}^s(\partial\Omega) ,\atop 
||w||_{\widetilde{H}^s(\partial \Omega)} = 1 }
\langle g,w \rangle_{H^{-s}(\partial \Omega),H^s(\partial \Omega)}.
\end{array}
$$
In other words, $g$ belongs to $\widetilde{H}^{-s}(\omega)$ and we have:
$$||g||_{\widetilde{H}^{-s}(\omega)}
\:\: \leq\:\: 
||R||_{{\mathcal L}(H^s(\omega),\widetilde{H}^s(\Gamma_N))}
\, ||g||_{H^{-s}(\partial \Omega)}.
$$
Recalling the estimate \cref{est_13}, we arrive at the desired conclusion \cref{est_lem}, which terminates the proof. 
\end{proof}


\bibliographystyle{siam}
\bibliography{asymp_Robin_9.bib}
\end{document}